\newcommand*{\rom}[1]{\expandafter\@slowromancap\romannumeral #1@}
\titlespacing*{\section}{0pt}{\baselineskip}{\baselineskip}
\titleformat{\subsection}{\normalfont\bfseries}{\thesubsection.}{1em}{}
\titleformat{\subsubsection}{\normalfont}{\thesubsubsection.}{1em}{\itshape}
\newtheorem{theorem}{Theorem}[section]
\newtheorem{lemma}[theorem]{Lemma}
\newtheorem{remark}[theorem]{Remark}
\numberwithin{equation}{section}
\newenvironment{proof}{\paragraph{Proof:}}{\hfill$\square$}
\newcommand{\R}{\mathbb R}
\newcommand{\bA}{\mathbf A}
\newcommand{\bH}{\mathbf H}
\newcommand{\bI}{\mathbf I}
\newcommand{\bP}{\mathbf P}
\newcommand{\bV}{\mathbf V}
\newcommand{\bg}{\mathbf g}
\newcommand{\blf}{\mathbf f}
\newcommand{\bn}{\mathbf n}
\newcommand{\be}{\mathbf e}
\newcommand{\bp}{\mathbf p}
\newcommand{\bu}{\mathbf u}
\newcommand{\bv}{\mathbf v}
\newcommand{\bw}{\mathbf w}
\newcommand{\bx}{\mathbf x}
\newcommand{\by}{\mathbf y}
\newcommand{\bbf}{\mathbf f}
\newcommand{\T}{\mathcal T}
\newcommand{\divG}{{\mathop{\,\rm div}}_{\Gamma}}
\newcommand{\gradG}{\nabla_{\Gamma}}
\newcommand{\nablaG}{\nabla_{\Gamma}}
\newcommand{\laplG}{\Delta_{\Gamma}}
\newcommand{\OGamma}{\Omega^\Gamma_h}
\renewcommand{\div}{\textrm{div}\ \!}
\newcommand{\tr}{{\rm tr}}
\def\cl {\nonumber \\}
\def\el {\nonumber }
\newtheorem{assumption}{Assumption}[section]
\newcommand{\compositenorm}[1]{{\left\vert\kern-0.25ex\left\vert\kern-0.25ex\left\vert #1
    \right\vert\kern-0.25ex\right\vert\kern-0.25ex\right\vert}}
\title{A decoupled, stable, and linear FEM for a phase-field model of variable density two-phase incompressible surface flow}
\author{
Yerbol Palzhanov 
\and
Alexander Zhiliakov
\and
Annalisa Quaini
\and
Maxim Olshanskii
}
\date{\small Department of Mathematics, University of Houston, Houston, Texas 77204\\ \tt \{ypalzhanov\}\{azhiliakov\}\{aquaini\}\{maolshanskiy\}@uh.edu}
\begin{document}

\maketitle

\begin{abstract} The paper considers a thermodynamically consistent phase-field model of a two-phase flow of incompressible viscous fluids. The model allows for a  non-linear dependence of fluid density on the phase-field order parameter. Driven by applications in biomembrane studies, the model is written for tangential flows of fluids constrained to a surface and consists of (surface) Navier--Stokes--Cahn--Hilliard type equations.
We apply an unfitted finite element method to discretize the system and introduce a fully discrete time-stepping scheme with the following properties: (i) the scheme decouples the fluid and phase-field equation solvers at each time step,
(ii) the resulting two algebraic systems are linear, and (iii) the numerical solution satisfies the same stability bound as the solution of the original system under some restrictions on the discretization parameters. Numerical examples are provided to demonstrate the stability, accuracy, and overall efficiency of the approach. Our computational study of several  two-phase surface flows reveals
some interesting dependencies of flow statistics on the geometry.
\end{abstract}

\vskip .2cm
{\bf Keywords}: Two-phase flow; Navier--Stokes--Cahn--Hilliard system; Surface PDEs, TraceFEM, Bio-membranes, Kevin--Helmholtz instability; Rayleigh--Taylor instability

\section{Introduction}\label{sec:intro}

Due to many applications in the physical and biological sciences,
binary (two-phase) flows have attracted increasing interest in the past
decades. The phase-field method has emerged as a powerful and effective technique for
modeling multi-phase problems in general. This method describes the physical system using a set of
field variables that are continuous across the interfacial regions separating the phases.
The main reasons for the success of the phase-field
methodology are the following: it replaces sharp interfaces by thin transition regions called diffuse interfaces, making
front-tracking unnecessary, and it is based on rigorous mathematics. The classical diffuse-interface
description of binary incompressible fluid with density-matched components is the so-called
\emph{Model H} \cite{RevModPhys.49.435}, which is a Navier--Stokes--Cahn--Hilliard (NSCH) system.
Several generalization to the case of variable density components
have been presented \cite{Abels2012,Aki2014,BOYER200241,DING20072078,DONG20125788,GONG201720,Gong2018,Lowengrub1998,shokrpour2018diffuse}.
Some of these models relax the incompressibility constraint to quasi-compressibility and not all models
satisfy Galilean invariance, local mass conservation or thermodynamic consistency.
In this paper, we focus on the generalization of Model H first presented in
\cite{Abels2012}. This model is thermodynamically consistent when the density of the mixture depends linearly
on the phase-field variable (concentration or volume/surface fraction). We present an extension
of the model in \cite{Abels2012} that is thermodynamically consistent for a general
monotone relation of density and phase-field variable.

Driven by applications in the analysis of lateral organization of plasma membranes (see \cite{Yushutin_IJNMBE2019,zhiliakov2021experimental} for more context),
we adopt this new model to simulate two-phase flow dynamics on arbitrary-shaped
closed smooth surfaces. While there exist many computational studies of multi-component fluid flows
in planar and volumetric domains (see, e.g., \cite{GUO2017144,LIU2020108948,NOCHETTO2016497,YANG2019435,ZHU2020614} for some recent works), the number of papers where NSCH systems are treated on manifolds
is limited. This can be explained by the fact that solving equations numerically on
general surfaces poses additional difficulties that are related to the discretization of tangential differential operators and
the approximate recovery of (possibly) complex shapes. In \cite{nitschke2012finite}, the authors resort to
a stream function formulation, decouple the surface Navier--Stokes problem from the surface Cahn-Hilliad problem,
and use a parametric finite element approach. Yang and Kim \cite{YANG2020113382} experimented with a
finite difference method on staggered marker-and-cell meshes for a NSCH system embedded in the mesh.

In the present paper, we study a \emph{geometrically unfitted} finite element method
for the simulation of two-phase incompressible flow on surfaces.
Our approach builds on earlier work on a unfitted FEM for elliptic PDEs posed on surfaces~\cite{ORG09}
called TraceFEM. Unlike some other geometrically unfitted methods for surface PDEs,
TraceFEM employs sharp surface representation. The surface can be defined implicitly
and no knowledge of the surface parametrization is required.
This method allows to solve for a scalar quantity or a vector field on a surface, for which a
 parametrization or triangulation is not required.
TaceFEM has been extended to the surface Stokes problem in \cite{olshanskii2018finite,ORZ} and the surface Cahn--Hilliard problem in
\cite{Yushutin_IJNMBE2019}. One of the main advantages of TraceFEM is that it
works well for PDEs posed on evolving surfaces, including cases with topological changes
\cite{lehrenfeld2018stabilized}. Although we will not benefit from this advantage in this paper
(as the surfaces are fixed), it will become handy in a future modeling of membrane deformation and fusion for  multicomponent lipid bilayers.

Although technical, the numerical analysis of diffuse-interface models for two-phase fluids with \emph{matching densities}
can be largely built on well-established analyses for the incompressible Navier--Stokes equations and Cahn--Hilliad
equations alone.
See, for example,  \cite{feng2006fully} for the convergence study of a coupled implicit FE method,
\cite{kay2008finite} for the analysis of a decoupled FE scheme, and
\cite{han2017numerical} for the analysis of a linear and second-order in time FE method.
The design of energy stable, efficient, and consistent discretizations of diffuse-interface  models for two-phase fluids
with \emph{non-matching densities} turns out to be more challenging.

Following the ideas from~\cite{guermond2000projection} for the incompressible Navier--Stokes equations with variable density,  in~\cite{shen2010phase}
Shen and Yang suggested a stable time-stepping method for a NSCH system with
the momentum equation modified based on an inconsistent mass conservation equation.
In~\cite{shen2015decoupled}, the same authors introduced a time-stepping scheme
for the thermodynamically consistent model from~\cite{Abels2012}. This scheme (which is
first order and linear, and it decouples fluid and phase equations) introduces some extra terms
in the momentum equation and modifies the advection flux for the order parameter
to achieve energy stability.
In the same paper, the authors present the stability analysis for a semi-discrete case.
In \cite{zhu2019efficient}, the approach from~\cite{shen2015decoupled} was extended with the application of
a scalar auxiliary variable method \cite{shen2018scalar} and the stability was studied for a finite difference scheme on uniform staggered grids.
A fully discrete FE method for the model from~\cite{Abels2012} was analyzed in  \cite{grun2013convergent},
subject to the coupled treatment of the phase and fluid equations and a modification of the momentum equation.
In \cite{garcke2016stable}, this analysis is extended to a posteriori estimates and adaptivity.
The question of building provably stable and computationally efficient methods
for thermodynamically consistent discretization models of two-phase fluids
with variable densities remains largely open.

The present study contributes to the field with a first order in time, linear, and decoupled
FE scheme for our generalization of the model from \cite{Abels2012}. One advantage of our scheme is that
it neither modifies the momentum equation nor alters the advection velocity.
We prove that our scheme is stable under a mild time-step restriction. Moreover, the analysis tracks the
dependence of the estimates on $\epsilon$, the critical model parameter that defines the width of the transition region between phases.
The fact that the model is posed on arbitrary-shaped
closed smooth surfaces and the use of an unfitted finite element method for spatial discretization
add some further technical difficulties to our analysis. However, apart from
these extra technical details, the general line of arguments can be followed
for the simplified to Euclidian setting or extended to other spatial discretization techniques.

Numerical analysis is even less developed for quasi-incompressible diffuse-interface models due to their additional complexity,
e.g.~a more subtle nonlinearity. Here, we would like to mention the
analysis of a semi-discrete method (coupled, but resulting in linear systems at each time step) in
\cite{shokrpour2018diffuse}.

The rest of the paper is organized as follows. In Sec.~\ref{sec:problem}, we state our generalization of the model from \cite{Abels2012}
and its variational formulations. A decoupled linear FEM based on the
application of TraceFEM to our model is presented in Sec.~\ref{sec:method}, while the analysis
is reported in Sec.~\ref{sec:analysis_FE}.  In Sec.~\ref{sec:num_res}, we report several numerical results,
including a convergence test and the simulation of the Kevin--Helmholtz and Rayleigh--Taylor instabilities.
 Sec.~\ref{sec:concl} collects a few concluding remarks.

\section{Problem definition}\label{sec:problem}

On surface $\Gamma$ we consider a heterogeneous mixture of two species with
surface fractions $c_i = S_i/S$, $i = 1, 2$, where $S_i$
are the surface area occupied by the components and $S$ is the surface area of $\Gamma$.
Since $S = S_1 + S_2$, we have $c_1 + c_2 = 1$. Let $c_1$ be the representative surface fraction, i.e~$c = c_1$.
Moreover, let $m_i$ be the mass of component $i$ and $m$ is the total mass.
Notice that density of the mixture can be expressed as
$\rho= \frac{m}{S} = \frac{m_1}{S_1} \frac{S_1}{S} + \frac{m_2}{S_2} \frac{S_2}{S}$. Thus,
\begin{align}
\rho= \rho(c) = \rho_1 c+ \rho_2 (1-c). \label{rho_c}
\end{align}
Similarly, for the dynamic viscosity of the mixture we can write
\begin{align}
\eta=\eta(c)=\eta_1 c+\eta_2(1-c),~\label{nu_c}
\end{align}
where $\eta_1$ and $\eta_2$ are the dynamic viscosities of the two species.

In order to state the model posed on surface $\Gamma$, we need some preliminary notation.
Let  $\bP=\bP(\bx)\coloneqq\bI -\bn(\bx)\bn(\bx)^T$ for $\bx \in \Gamma$ be the orthogonal projection onto the tangent plane.  For a scalar function $p:\, \Gamma \to \mathbb{R}$ or a vector function $\bu:\, \Gamma \to \mathbb{R}^3$  we define
 $p^e\,:\,\mathcal{O}(\Gamma)\to\mathbb{R}$, $\bu^e\,:\,\mathcal{O}(\Gamma)\to\mathbb{R}^3$,
 extensions of $p$ and $\bu$ from $\Gamma$ to its neighborhood $\mathcal{O}(\Gamma)$.
 The surface gradient and covariant derivatives on $\Gamma$ are then defined as
 $\nablaG p=\bP\nabla p^e$ and  $\nabla_\Gamma \bu\coloneqq \bP \nabla \bu^e \bP$. These  definitions are  independent of a particular smooth extension of $p$ and $\bu$ off $\Gamma$.
On $\Gamma$ we consider the surface rate-of-strain tensor \cite{GurtinMurdoch75} given by
\begin{equation} \label{strain}
E_s(\bu) \coloneqq \frac12(\nabla_\Gamma \bu + (\nabla_\Gamma \bu)^T).
 \end{equation}
The surface divergence operators for a vector $\bg: \Gamma \to \R^3$ and
a tensor $\bA: \Gamma \to \mathbb{R}^{3\times 3}$ are defined as:
\[
 \divG \bg  \coloneqq \tr (\gradG \bg), \qquad
 \divG \bA  \coloneqq \left( \divG (\be_1^T \bA),\,
               \divG (\be_2^T \bA),\,
               \divG (\be_3^T \bA)\right)^T,
               \]
with $\be_i$ the $i$th standard basis vector in $\R^3$.

The classical phase-field model to describe the flow of two immiscible, incompressible, and Newtonian fluids
is the so-called \emph{Model H} \cite{RevModPhys.49.435}. One of the fundamental assumptions for Model H
is that the densities of both components are matching. Several extensions have been proposed
to account for the case of non-matching densities; see Sec.~\ref{sec:intro}. Here, we
restrict our attention to
a thermodynamically consistent generalization of Model H first presented in \cite{Abels2012}. For surface based quantities the model
reads:
\begin{align}
\rho\partial_t \bu + \rho(\nabla_\Gamma\bu)\bu - \bP\divG(2\eta E_s(\bu))+ \nabla_\Gamma p &=  -\sigma_\gamma  c \nablaG \mu+ {M \frac{d\rho}{dc}(\nabla_\Gamma\bu)\gradG \mu} + \bbf,  \label{gracke-1} \\
\divG \bu & =0 , \label{gracke-2}\\
\partial_t c +\divG(c\bu)-  \divG \left(M \gradG \mu \right)  &= 0  \label{gracke-3}, \\
\mu &= \frac{1}{\epsilon} f_0' - \epsilon \laplG c.  \label{gracke-4}
\end{align}
Here, $\bu$ is the surface averaged
tangential velocity $\bu = c \bu_1 + (1 - c) \bu_2$, density is given by \eqref{rho_c}
and viscosity by \eqref{nu_c}, $\sigma_\gamma$ is line tension and $\mu$ is the chemical potential defined in \eqref{gracke-4}.
Force vector $\bbf$, with $\bbf \cdot \bn = 0$, is given.
In \eqref{gracke-3} $M $ is the mobility coefficient, while in \eqref{gracke-4}
$f_0(c)$ is the specific free energy of a homogeneous phase and
$\epsilon$ is a parameter related to the thickness of the interface between the two phases.
We recall that in order to have phase separation, $f_0$ must be a non-convex function of $c$. For a comprehensive discussion of the related Navier--Stokes and Cahn--Hilliard equations on surfaces we refer, for example,  to  \cite{jankuhn2017incompressible,EILKS20089727,du2011finite}.

Problem \eqref{gracke-1}--\eqref{gracke-4} has only one additional term with respect to Model H:
the middle term at the right-hand side in eq.~\eqref{gracke-1}. Notice that this term vanishes in the case of matching densities
since $\frac{d\rho}{dc}= (\rho_1-\rho_2)$.
However, the term is crucial for thermodynamic consistency when the densities do not match
and it can be interpreted as  additional momentum flux due to diffusion of
the components driven by the gradient of the chemical potential.


In practice, we are interested in more general relations than \eqref{rho_c} between $\rho$ and $c$, since depending on the choice of $f_0(c)$ (and because of numerical errors while computing) the order parameter may not be constrained in $[0,1]$ and so $\rho$ and $\eta$ based on \eqref{rho_c} may take physically meaningless (even negative) values. Since we do not see how to show the thermodynamic consistency  of \eqref{gracke-1}--\eqref{gracke-4} for non-linear $\rho(c)$, we propose a further modification.
Without the loss of generality let $\rho_1\ge\rho_2$. 
For a general dependence of $\rho$ on $c$, it is reasonable to assume that $\rho$ is a smooth monotonic function of $c$, i.e. $\frac{d\rho}{dc}\ge 0$ (for $\rho_1\ge\rho_2$), and so we can set
\begin{equation}\label{monot}
\frac{d\rho}{dc} = \theta^2.
\end{equation}
Then, we replace \eqref{gracke-1} with
\begin{equation}\label{grache-1m}
  \rho\partial_t \bu + \rho(\nabla_\Gamma\bu)\bu - \bP\divG(2\eta E_s(\bu))+ \nabla_\Gamma p =  -\sigma_\gamma c  \nablaG \mu + {M \theta(\nabla_\Gamma(\theta\bu)\,)\gradG \mu} + \bbf.
\end{equation}
The updated model \eqref{grache-1m},\eqref{gracke-2}--\eqref{gracke-4} obviously
coincides with \eqref{gracke-1}--\eqref{gracke-4} for $\rho(c)$ from \eqref{rho_c},
but exhibits thermodynamic consistency for a general monotone $\rho$--$c$ relation as we show below. The consistency is preserved if $M$ is a non-negative function of $c$ rather than  a constant coefficient.
 Thermodynamic consistent extensions of  \eqref{gracke-1}--\eqref{gracke-4} for a generic smooth $\rho(c)$ (no monotonicity assumption) were also considered in \cite{abels2016weak,abels2019existence} for the purpose of well-posedness analysis. Those extensions introduce yet more term(s) in the momentum equation, so for computational needs and numerical analysis purpose we opt for \eqref{grache-1m}.

From now on, we will focus on problem \eqref{grache-1m}, \eqref{gracke-2}--\eqref{gracke-4}.
This is a Navier--Stokes--Cahn--Hilliard (NSCH) system that needs to be supplemented with the
definitions of mobility $M$ and free energy per unit area $f_0$. Following many of the existing analytic studies,
as well as numerical studies, we assume mobility to be constant (i.e., independent of $c$).
As for $f_0$, a classical choice  is the Ginzburg--Landau double-well potential
\begin{align}\label{eq:f0}
f_0(c) = \frac{\xi}{4} c^2(1 - c)^2,
\end{align}
where $\xi$ defines the barrier height, i.e. the local maximum at $c = 1/2$ \cite{Emmerich2011}.
We set  $\xi = 1$ for the rest of the paper.

For the numerical method, we need a weak (integral) formulation.
We define the spaces
\begin{equation}   \label{defVT}
 \bV_T\coloneqq \{\, \bu \in H^1(\Gamma)^3~|~ \bu\cdot \bn =0\,\},\quad E\coloneqq \{\, \bu \in \bV_T~|~ E_s(\bu)=\mathbf{0}\,\}.
\end{equation}
We define the Hilbert space $\bV_T^0$ as an orthogonal complement of  $E$ in $\bV_T$
(hence $\bV_T^0 \sim \bV_T/E$), and recall the surface Korn's inequality~\cite{jankuhn2017incompressible}:
\begin{equation}\label{Korn}
\|\bu\|_{H^1(\Gamma)}\le C_K \|E_s(\bu)\|,\quad\forall\,\bu\in\bV_T^0.
\end{equation}
In \eqref{Korn} and later we use short notation $\|\cdot\|$ for the $L^2$-norm on $\Gamma$.
Finally, we define $L_0^2(\Gamma)\coloneqq\{\, p \in L^2(\Gamma)~|~ \int_\Gamma p\,ds=0\,\}$.
To devise the weak formulation, one multiplies eq.~\eqref{grache-1m} by $\bv \in  \bV_T$,
eq.~\eqref{gracke-2} by $q \in L^2(\Gamma)$, eq.~\eqref{gracke-3} by $v\in H^1(\Gamma)$,
and eq.~\eqref{gracke-4} by $g\in H^1(\Gamma)$ and integrates all the equations over $\Gamma$.
For eq.~\eqref{gracke-1m} and \eqref{gracke-3}, one employs the integration by parts identity,
which for a closed smooth surface $\Gamma$ reads:
 \begin{equation}\label{int_parts}
 \int_{\Gamma} v\div_\Gamma\bg \, ds= -\int_{\Gamma} \bg\cdot\nabla_\Gamma v\, ds+\int_{\Gamma} \kappa v\bg\cdot\bn \, ds,\quad \text{for}~\bg\in H^1(\Gamma)^3,\, v\in H^1(\Gamma),
 \end{equation}
here $\kappa$ is the sum of principle curvatures.
Identity \eqref{int_parts} is applied to the second term in \eqref{gracke-3} (i.e.~$\bg=c \bu$), which
leads to no contribution from the curvature term since $\bu$ is tangential, and
to the third term in \eqref{gracke-3} (i.e.~$\bg=M \nabla_\Gamma \mu$), which makes the curvature term vanish
as well.
For a similar reason (component-wise), the curvature term vanishes also
when identity \eqref{int_parts} is applied to the diffusion term in \eqref{grache-1m}.

The weak (integral) formulation of the surface NSCH problem \eqref{grache-1m}, \eqref{gracke-2}--\eqref{gracke-4} reads:
Find $(\bu,p,c,\mu) \in \bV_T \times L_0^2(\Gamma) \times H^1(\Gamma) \times H^1(\Gamma)$ 
such that
\begin{align}
&\int_\Gamma \left( \rho \partial_t \bu\cdot\bv + \rho(\nabla_\Gamma\bu)\bu \cdot \bv + 2\eta  E_s(\bu):E_s(\bv)\right) \, ds - \int_\Gamma p\,\divG \bv \, ds =  -\int_\Gamma \sigma_\gamma  c  \nablaG \mu \cdot\bv \, ds  \cl
& \quad \quad + \int_\Gamma M (\nabla_\Gamma(\theta\bu)) (\gradG \mu) \cdot(\theta\bv)  \, ds + \int_\Gamma  \bbf \cdot\bv \, ds, \label{gracke-w1} \\
& \int_\Gamma q\,\divG \bu \, ds = 0, \label{gracke-w2} \\
&\int_\Gamma \partial_t c \,v \, ds - \int_\Gamma c \bu \cdot  \gradG v \, ds + \int_\Gamma M \gradG \mu \cdot \gradG v \, ds = 0, \label{gracke-w3} \\
&\int_\Gamma  \mu \,g \, ds =  \int_\Gamma \frac{1}{\epsilon} f_0'(c) \,g \, ds + \int_\Gamma \epsilon \gradG c \cdot \gradG g \, ds, \label{gracke-w4}
\end{align}
for all $ (\bv,q,v,g) \in \bV_T \times L^2(\Gamma) \times H^1(\Gamma) \times H^1(\Gamma)$.

\begin{remark}
By testing \eqref{gracke-w1} with $\bv = \bu$, \eqref{gracke-w2} with $q = p$,  \eqref{gracke-w3} with $v = \mu$, and
using \eqref{gracke-4}, one readily obtains the following energy equality:
\begin{equation}\label{aux271}
\frac{d}{dt}\int_\Gamma\left(\frac{\rho}2|\bu|^2 +\sigma_\gamma\left(\frac{1}{\epsilon}f_0 + \frac{\epsilon}2 |\nablaG c|^2\right)\right) ds
+ \int_\Gamma 2\eta |E_s(\bu)|^2 ds + \int_\Gamma \sigma_\gamma M |\gradG \mu|^2 ds= \int_\Gamma\blf\cdot\bu\,ds.
\end{equation}
In other words, model \eqref{grache-1m}, \eqref{gracke-2}--\eqref{gracke-4} is thermodynamically consistent,
i.e.~the system is dissipative for $\bbf = \boldsymbol{0}$.
\end{remark}

Indeed, the first two terms in eq.~\eqref{gracke-w1} tested with $\bv = \bu$ can be handled as follows:
\[
\begin{split}
   \int_\Gamma\left(\rho \partial_t\bu \cdot \bu + \rho(\nabla_\Gamma\bu)\bu \cdot \bu\right) ds & =
   \int_\Gamma\left(\frac{\rho}2 \partial_t |\bu|^2 - \frac{1}2\divG(\rho\bu)|\bu|^2\right) ds \\
     &=  \int_\Gamma\left(\frac{1}2\partial_t (\rho|\bu|^2) -\frac12\left( \partial_t \rho +\divG(\rho\bu)\right)|\bu|^2\right) ds.
\end{split}
\]
Using generic $\rho = \rho(c)$, \eqref{monot}, and \eqref{gracke-2} we get
\[
\partial_t \rho +\divG(\rho\bu)=\frac{d\rho}{dc} \left(\partial_t c +\divG(c\bu)\right)=\theta^2\left(\partial_t c +\divG(c\bu)\right).
\]
With the help of \eqref{gracke-3} , this yields
\begin{align}
\int_\Gamma\left( \partial_t \rho +\divG(\rho\bu)\right)|\bu|^2ds &=
-\int_\Gamma M (\gradG \mu)\cdot(\gradG|\theta\bu|^2) ds \cl
&=-2\int_\Gamma M(\gradG (\theta\bu))(\gradG \mu) \cdot (\theta\bu) ds, \el
\end{align}
which will cancel with the second term at the right-hand side in eq.~\eqref{gracke-w1} tested with $\bv = \bu$.
Thus, from eq.~\eqref{gracke-w1} tested with $\bv = \bu$ we obtain the following equality:
\begin{equation}\label{aux247}
\frac{1}2\frac{d}{dt}\int_\Gamma\rho|\bu|^2 ds + \int_\Gamma 2\eta |E_s(\bu)|^2 ds = -\int_\Gamma \sigma_\gamma c  \nablaG \mu \cdot \bu\,ds+\int_\Gamma\blf\cdot\bu\,ds,
\end{equation}
where we have also used eq.~\eqref{gracke-w2} tested with $q = p$.
From eq.~\eqref{gracke-w3} tested with $v = \mu$ and multiplied by $\sigma_\gamma$, we get:
\begin{align}
\int_\Gamma \sigma_\gamma (\bu \cdot  \gradG c) \, \mu\,ds = - \int_\Gamma \sigma_\gamma (\bu \cdot  \gradG \mu) \, c\,ds= - \int_\Gamma \sigma_\gamma \partial_t c \, \mu\,ds  - \int_\Gamma \sigma_\gamma M | \gradG \mu |^2 ds. \label{aux378}
\end{align}
The first term on the right-hand side can be handled using
\eqref{gracke-4} as follows:
\[
\int_\Gamma \partial_t c \, \mu\,ds = \int_\Gamma \partial_t c \left( \frac{1}{\epsilon}f_0' - \epsilon \laplG c\right) ds=
\frac{d}{dt}\int_\Gamma\left( \frac{1}{\epsilon}f_0 + \frac{\epsilon}2 |\nablaG c|^2\right) ds.
\]
Plugging this into \eqref{aux378} and then using what we get in \eqref{aux247}, we obtain the energy balance \eqref{aux271}.

\section{Numerical method}\label{sec:method}

For the discretization of the variational problem \eqref{gracke-w1}--\eqref{gracke-w4}
we apply an unfitted finite element method called trace finite element approach (TraceFEM).
To discretize surface equations, TraceFEM relies on a tessellation of a 3D bulk computational domain
$\Omega$ ($\Gamma\subset\Omega$ holds) into shape regular tetrahedra untangled to the position of $\Gamma$.

We start with required definitions to set up finite element spaces and variational form. A few auxiliary results will be proved.
Then we proceed to the fully-discrete method by introducing a splitting time discretization, which decouples
\eqref{gracke-w1}--\eqref{gracke-w4} into one linear fluid problem and one
phase-field problem per every time step.
\medskip


Surface $\Gamma$ is defined implicitly as the zero level set of a sufficiently smooth
(at least Lipschitz continuous) function $\phi$, i.e.~$\Gamma=\{\bx\in\Omega\,:\, \phi(\bx)=0\}$,
such that $|\nabla\phi|\ge c_0>0$ in a 3D neighborhood $U(\Gamma)$ of the surface.
The vector field $\bn=\nabla\phi/|\nabla\phi|$ is normal on $\Gamma$ and  defines  quasi-normal directions in $U(\Gamma)$.
Let $\T_h$ be  the collection of all tetrahedra such that $\overline{\Omega}=\cup_{T\in\T_h}\overline{T}$.
The subset of tetrahedra that have a \emph{nonzero intersection} with $\Gamma$ is denoted by $\T_h^\Gamma$.
The grid can be refined towards $\Gamma$, however the tetrahedra from $\T_h^\Gamma$ form a quasi-uniform tessellation with the characteristic tetrahedra  size  $h$.
The domain formed by all tetrahedra in $\T_h^\Gamma$ is denoted by $\OGamma$.

On $\T_h^\Gamma$ we use a standard finite element space of continuous functions that are piecewise-polynomials
of degree $k$.
This bulk (volumetric) finite element space is denoted by $V_h^k$:
\[
V_h^k=\{v\in C(\OGamma)\,:\, v\in P_k(T)~\text{for any}~T\in\T_h^{\Gamma}\},
\]
In the trace finite element method formulated below, the traces of functions from $V_h^k$ on $\Gamma$
 are used to approximate the surface fraction and the chemical potential.
Our bulk velocity and pressure finite element spaces are either Taylor--Hood elements on $\OGamma$,
\begin{equation}\label{TH}
 \bV_h = (V_h^{m+1})^3, \quad Q_h = V_h^m \cap L^2_0(\Gamma), 
 \end{equation}
or equal order velocity--pressure elements
\begin{equation}\label{EE}
 \bV_h = (V_h^{m})^3, \quad Q_h = V_h^m \cap L^2_0(\Gamma), \quad m \geq 1.
 \end{equation}
These spaces are employed to discretize the surface Navier--Stokes system.
Surface velocity and pressure will be represented by the traces  of functions from $ \bV_h$ and $Q_h$ on $\Gamma$. In general, approximation orders $k$ (for the phase-field problem) and $m$ (for the fluid problem)
can be chosen to be different.


 \begin{assumption}
 We assume that integrals over $\Gamma$ can be computed exactly, i.e.~we do not consider geometry errors.
 \end{assumption}

 In practice, $\Gamma$ has to be approximated by a (sufficiently accurate) ``discrete'' surface $\Gamma_h \approx \Gamma$
 in such a way that integrals over $\Gamma_h$ can be computed accurately and efficiently.
 For first order finite elements ($m=1$, $k=1$), a straightforward  polygonal approximation of $\Gamma$
 ensures that the geometric approximation error is consistent with the finite element
 interpolation error; see, e.g., \cite{ORG09}. For higher order elements, numerical approximation of $\Gamma$ based on, e.g.,
 isoparametric trace FE can be used to recover the optimal accuracy~\cite{grande2018analysis}
 in a practical situation when parametrization of $\Gamma$ is not available explicitly.

 There are two well-known issues related to the fact that we are dealing with surface and unfitted finite elements:
 \begin{enumerate}
 \item The numerical treatment of condition $\bu \cdot \bn$. Enforcing
the condition $\bu_h \cdot \bn = 0$ on $\Gamma$ for polynomial functions $\bu_h \in  \bV_h$ is inconvenient and may lead to
locking (i.e., only $\bu_h = 0$ satisfies it). Instead, we add a penalty term to the weak
formulation to enforce the tangential constraint weakly.
\item  Possible small cuts of tetrahedra from $\T_h^\Gamma$ by the surface.
For the standard choice of finite element basis functions, this may lead to poorly conditioned algebraic systems.
We recover algebraic stability by adding certain volumetric terms to the finite element formulation.
 \end{enumerate}

 To make the presentation of the finite element formulation more compact, we introduce the following finite element bilinear forms
 related to the Cahn--Hilliard part of the problem:
\begin{align}
& a_\mu(\mu, v) \coloneqq \int_{\Gamma} M \nabla_{\Gamma} \mu \cdot \nabla_{\Gamma}v\,ds +  \tau_\mu \int_{\OGamma} (\bn\cdot\nabla \mu) (\bn\cdot\nabla v)\,d\bx \label{eq:a_mu} \\
& a_c(c, g) \coloneqq  \epsilon\int_{\Gamma} \gradG c \cdot \gradG g \, ds + \tau_c \int_{\OGamma} ( \bn\cdot\nabla c) (\bn\cdot\nabla g) \,d\bx. \label{eq:a_c}
\end{align}
Forms \eqref{eq:a_mu}--\eqref{eq:a_c} are well defined for $\mu,v, c, g \in H^1(\OGamma)$.
The second term in \eqref{eq:a_mu} and \eqref{eq:a_c} take care of the issue of the small  element cuts
(the above issue 2)~\cite{grande2018analysis}.
These terms are consistent up to geometric errors related to the approximation of $\Gamma$ by $\Gamma_h$ and $\bn$ by $\bn_h$ in the following sense: any smooth solution $\mu$ and $c$ can be extended off the surface along (quasi)-normal directions
so that $\bn\cdot\nabla \mu=0$ and $\bn\cdot\nabla c=0$ in $\OGamma$.
We set the stabilization parameters as follows: 
\[
\tau_\mu=h,\quad \tau_c=\epsilon\,h^{-1}.
\]

For the stability of a numerical method, it is crucial that the computed density and viscosity stay positive.
The polynomial double-well potential does not enforce  $c$ to stay within $[0,1]$ interval and hence $\rho$ and
$\eta$  may eventually take negative values, if one adopts the linear relation between
$c$ and $\rho$ in \eqref{rho_c} or between $c$ and $\eta$ in \eqref{nu_c}. Numerical errors may be
another reason for the order parameter to depart significantly from $[0,1]$.
Therefore, assuming without loss of generality that
$\rho_1\ge\rho_2$ and $\eta_1\ge\eta_2$, we first replace \eqref{rho_c} and \eqref{nu_c}
with the following cut-off functions that ensure density and viscosity stay positive:
\begin{equation}\label{eq:cut-off}
\rho(c)=\left\{
\begin{array}{cc}
  \rho_2 & c\le 0 \\
  c\rho_1+(1-c)\rho_2 & c>0
\end{array}
\right.\qquad \eta(c)=\left\{
\begin{array}{cc}
  \eta_2 & c\le 0 \\
  c\eta_1+(1-c)\eta_2 & c>0
\end{array}
\right.
\end{equation}
Note that unlike some previous studies, we clip the linear dependence \eqref{rho_c} and \eqref{nu_c} only from below.
The resulting convexity  of $\rho(c)$ plays a role in the stability analysis later. Nevertheless, \eqref{eq:cut-off} is  not completely satisfactory since $\theta^2=\frac{\delta \rho}{\delta c}$ is discontinuous, while we need $\theta$ to be from $C^1$. To this end, we approximate $\rho(c)$ from \eqref{rho_c} by a smooth monotone convex and uniformly positive function. In our implementation we let
$\theta^2 = \frac{\rho_1-\rho_2}2\left(\tanh(c/\alpha)+1\right)$, with $\alpha=0.1$, and $\rho(c)=\int_0^c\theta^2(t)dt+\rho_2$.

\smallskip

Later we make use of the decomposition of a vector field on $\Gamma$ into its tangential and normal components:
$
\bu=\overline{\bu}+(\bu\cdot\bn)\bn.
$
For the Navier--Stokes part, we introduce the following forms:
\begin{align}
& a(\eta; \bu,\bv) \coloneqq \int_\Gamma 2\eta E_s( \overline{\bu}):  E_s( \overline{\bv})\, ds+\tau \int_{\Gamma}(\bn\cdot\bu) (\bn\cdot\bv) \, ds + \beta_u \int_{\OGamma} [(\bn\cdot\nabla) \bu] \cdot [(\bn\cdot\nabla) \bv]  \, d\bx, \label{eq:a} \\
& c(\rho; \bw, \bu,\bv)\coloneqq \int_\Gamma\rho\bv^T(\nabla_\Gamma\overline{\bu})\bw\, ds 
 +\frac12 \int_\Gamma\widehat{\rho}(\divG \overline{\bw})\overline{\bu}\cdot\overline{\bv}\, ds, \label{eq:c} \\
&b(\bu,q) =  \int_\Gamma \bu\cdot\nabla_\Gamma q \, ds, \label{eq:b} \\
&s(p,q) \coloneqq \beta_p  \int_{\OGamma} (\bn\cdot\nabla p)(\bn\cdot\nabla q) \,d\bx. \label{eq:s}
\end{align}
with $\widehat{\rho}=\rho-\frac{d\rho}{d\,c}c$.
Forms \eqref{eq:a}--\eqref{eq:s} are well defined for $p,q \in H^1(\OGamma)$, $\bu,\bv,\bw \in H^1(\OGamma)^3$.
In \eqref{eq:a}, $\tau>0$ is a penalty parameter to enforce the tangential constraint (i.e., to address the above issue 1),
while $\beta_u\ge0$ in \eqref{eq:a} and $\beta_p\ge0$ in \eqref{eq:s} are stabilization parameters
to deal with possible small cuts. They are set according to \cite{Jankuhn2020}:
\begin{equation} \label{param}
\tau=h^{-2},\quad \beta_p=h, \quad \beta_u=h^{-1}.
\end{equation}
If one uses equal order pressure-velocity trace elements instead of Taylor--Hood elements, then form \eqref{eq:s} should be replaced by
\begin{align}
s(p,q) \coloneqq \beta_p  \int_{\OGamma} \nabla p\cdot\nabla q \, d\bx. \el 
\end{align}
The second term in \eqref{eq:c} is consistent since the divergence of the true tangential velocity  is zero.
To avoid differentiation of  the projector operator, one may use the identity $\nabla_\Gamma\overline{\bu}=\nabla_\Gamma\bu-(\bu\cdot\bn)\bH$ to implement the $a$-form and $c$-form.
For the analysis, we need the identity for the form \eqref{eq:c} given in the following elementary lemma.
\begin{lemma}\label{prop1} For any $\bu,\bw \in H^1(\OGamma)^3$, it holds
\[
   c(\rho; \bw, \bu,\bu)=- \frac12 \int_\Gamma \theta^2 \divG (c\overline{\bw})|\overline{\bu}|^2\, ds.
\]
\end{lemma}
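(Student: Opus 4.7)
The plan is to evaluate the two terms of $c(\rho;\bw,\bu,\bu)$ separately, noting that the $\widehat{\rho}$ in the second term was defined precisely so that the calculation collapses to the claimed right--hand side.

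First I would simplify the convective term. Since $\nabla_\Gamma\overline{\bu}=\bP\nabla\overline{\bu}^e\bP$ projects tangentially on both sides, replacing $\bu$ and $\bw$ with their tangential components costs nothing, so
\[
\bu^T(\nabla_\Gamma\overline{\bu})\bw = \overline{\bu}^T(\nabla_\Gamma\overline{\bu})\overline{\bw} = \tfrac{1}{2}\,\overline{\bw}\cdot\nabla_\Gamma |\overline{\bu}|^2.
\]
Because $\overline{\bw}$ is tangential, the closed--surface integration by parts identity \eqref{int_parts} applied with $\bg=\rho\overline{\bw}$ and $v=\tfrac12|\overline{\bu}|^2$ has no curvature contribution, yielding
\[
\int_\Gamma \rho\,\bu^T(\nabla_\Gamma\overline{\bu})\bw\,ds = -\tfrac{1}{2}\int_\Gamma \divG(\rho\overline{\bw})\,|\overline{\bu}|^2\,ds.
\]

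Next I would expand $\divG(\rho\overline{\bw})$ by the product rule and use $\rho=\rho(c)$ together with $\frac{d\rho}{dc}=\theta^2$ from \eqref{monot}:
\[
\divG(\rho\overline{\bw}) = \rho\,\divG\overline{\bw} + \theta^2\,\nabla_\Gamma c\cdot\overline{\bw}.
\]
The goal is to exhibit a $\theta^2\divG(c\overline{\bw})$ piece, so I add and subtract $\theta^2 c\,\divG\overline{\bw}$:
\[
\divG(\rho\overline{\bw}) = (\rho - \theta^2 c)\,\divG\overline{\bw} + \theta^2\divG(c\overline{\bw}) = \widehat{\rho}\,\divG\overline{\bw} + \theta^2\divG(c\overline{\bw}).
\]

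Substituting back and combining with the second term of $c(\rho;\bw,\bu,\bu)$, the two contributions involving $\widehat{\rho}\,\divG\overline{\bw}$ cancel exactly, leaving only
\[
c(\rho;\bw,\bu,\bu) = -\tfrac{1}{2}\int_\Gamma \theta^2\,\divG(c\overline{\bw})\,|\overline{\bu}|^2\,ds,
\]
as claimed. There is no real obstacle here; the only thing to be slightly careful about is the tangential projection in the first step (justifying the $\overline{\bu},\overline{\bw}$ reduction) and the fact that the boundary/curvature term in \eqref{int_parts} vanishes on the closed surface because $\overline{\bw}\cdot\bn=0$.
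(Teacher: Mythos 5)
Your proof is correct and follows essentially the same route as the paper: both reduce the convective term to tangential components, integrate by parts on the closed surface (no curvature contribution since $\overline{\bw}\cdot\bn=0$), use $\nablaG\rho=\theta^2\nablaG c$, and let the $\widehat{\rho}\,\divG\overline{\bw}$ contributions cancel. The only cosmetic difference is that you reach the intermediate identity $\int_\Gamma\rho\,\bu^T(\nablaG\overline{\bu})\bw\,ds=-\frac12\int_\Gamma\divG(\rho\overline{\bw})|\overline{\bu}|^2\,ds$ directly via $\overline{\bu}^T(\nablaG\overline{\bu})\overline{\bw}=\frac12\overline{\bw}\cdot\nablaG|\overline{\bu}|^2$, whereas the paper integrates by parts on $\divG(\rho\overline{\bu}\otimes\overline{\bw})$ and solves for the term that reappears with opposite sign.
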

\begin{proof}
 Using the definition of the covariant gradient in terms of tangential operators, $\nablaG\overline{\bu} = \bP(\nabla \overline{\bu})\bP$, and the integration by parts \eqref{int_parts}, we compute for the first integral term in \eqref{eq:c}
  \[
  \begin{split}
  \int_\Gamma\rho\bu^T(\nabla_\Gamma\overline{\bu})\bw\, ds& = \int_\Gamma\rho\overline{\bu}^T(\nabla_\Gamma\overline{\bu})\overline{\bw}\, ds
  = -\int_\Gamma\divG(\rho\overline{\bu}\otimes\overline{\bw})\cdot\overline{\bu}\, ds\\
  &= -\int_\Gamma\rho \divG(\overline{\bw})|\overline{\bu}|^2\, ds-\int_\Gamma\overline{\bu}\nablaG(\rho\overline{\bu})\overline{\bw}\, ds\\
  &= -\int_\Gamma\rho \divG(\overline{\bw})|\overline{\bu}|^2\, ds- \int_\Gamma\rho\bu^T(\nabla_\Gamma\overline{\bu})\bw\, ds- \int_\Gamma(\overline{\bw}\cdot \nablaG\rho)|\overline{\bu}|^2\, ds
  \end{split}
  \]
From this equality and using  $\nablaG\rho=\frac{d\rho}{dc}\nablaG c$  we obtain:
\begin{equation}\label{aux767}
  \int_\Gamma\rho\bu^T(\nabla_\Gamma\overline{\bu})\bw\, ds=-\frac12\int_\Gamma\rho \divG(\overline{\bw})|\overline{\bu}|^2\, ds -\frac12\int_\Gamma\frac{d\rho}{d\,c} (\overline{\bw}\cdot\nablaG c)|\overline{\bu}|^2\, ds
\end{equation}
We recall that $\widehat{\rho}=\rho-\frac{d\rho}{d\,c}c$ and substitute \eqref{aux767} into
the definition of the form \eqref{eq:c}. After straightforward computations, we
arrive at the result:
\[
\begin{split}
c(\rho; \bw, &\bu,\bu)= \int_\Gamma\rho\bu^T(\nabla_\Gamma\overline{\bu})\bw\, ds
 +\frac12 \int_\Gamma(\rho-\frac{d\rho}{d\,c}c)(\divG \overline{\bw})|\overline{\bu}|^2\, ds\\
 &=-\frac12\int_\Gamma\rho \divG(\overline{\bw})|\overline{\bu}|^2\, ds -\frac12\int_\Gamma\frac{d\rho}{d\,c} (\overline{\bw}\cdot\nablaG c)|\overline{\bu}|^2\, ds
+\frac12 \int_\Gamma(\rho-\frac{d\rho}{d\,c}c)(\divG \overline{\bw})|\overline{\bu}|^2\, ds\\
&= -\frac12\int_\Gamma\frac{d\rho}{d\,c} (\overline{\bw}\cdot\nablaG c)|\overline{\bu}|^2\, ds
-\frac12 \int_\Gamma\frac{d\rho}{d\,c}c(\divG \overline{\bw})|\overline{\bu}|^2\, ds\\
&=- \frac12 \int_\Gamma \frac{d\rho}{d\,c} \divG (c\overline{\bw})|\overline{\bu}|^2\, ds
=- \frac12 \int_\Gamma \theta^2 \divG (c\overline{\bw})|\overline{\bu}|^2\, ds.
\end{split}
\]
\end{proof}

 For the numerical experiments in Sec.~\ref{sec:num_res}, we also add to bilinear form \eqref{eq:a} the grad-div stabilization term~\cite{olshanskii2002low},  $\widehat{\gamma}\,\int_\Gamma \divG \bu \, \divG \bv\,ds$. This term is not essential, in particular for the analysis in Sec.~\ref{sec:analysis_FE},
but we find it beneficial  for the performance of the iterative algebraic solver and for the
overall accuracy of the solution. We set the grad-div stabilization parameter $\widehat{\gamma}=1$.

At time instance $t^n=n\Delta t$, with time step $\Delta t=\frac{T}{N}$,  $\zeta^n$ denotes the approximation of generic
variable $\zeta(t^n, \bx)$. Further, we introduce the following notation  for a first order approximation of the
 time derivative:
 \begin{equation}\label{BDF1}
\left[\zeta\right]_t^{n} =\frac{\zeta^{n}- \zeta^{n-1}}{\Delta t}.
\end{equation}

The decoupled linear finite element method analyzed and tested in this paper reads as follows.
At time step $t^{n+1}$, perform:
\begin{itemize}
\item[-] \underline{Step 1}: Given $\bu^n_h\in \bV_h$ and $c^n_h\in V_h^k$, find $(c^{n+1}_h, \mu^{n+1}_h) \in V^k _h \times V^k _h$
such that:
\begin{align}
&\left(\left[c_h\right]_t^{n+1}, v_h\right) - \left(\bu^{n}_h  c^{n+1}_h, \nablaG v_h\right) + a_\mu(\mu_h^{n+1}, v_h) = 0,  \label{eq:CH_FE1} \\
&\left(  \mu_h^{n+1} - \frac{\gamma_c\Delta t}{\epsilon}\left[c_h\right]_{t}^{n+1}  - \frac{1}{\epsilon} f'_0(c_h^{n}),\,g_h\right)
- a_c(c_h^{n+1}, g_h) = 0,  \label{eq:CH_FE2}
  \end{align}
for all  $(v_h, g_h) \in V^k _h \times V^k _h$.
\item[-] \underline{Step 2}: Set 
$\theta^{n+1} =\sqrt{\frac{d\rho}{d c}(c^{n+1}_h)}$.
Find $(\bu_h^{n+1}, p_h^{n+1}) \in \bV_h \times Q_h$ such that
 \begin{align}
&  (\rho^n\left[\overline{\bu}_h\right]_t^{n+1},\bv_h)+ c(\rho^{n+1}; \bu^{n}_h, \bu^{n+1}_h,\bv_h) + a(\eta^{n+1};\bu_h^{n+1},\bv_h)  + b(\bv_h,p_h^{n+1}) \cl
& \quad \quad = -(\sigma_\gamma c^{n+1}_h \nablaG \mu^{n+1}_h, \bv_h) + M \left((\nabla_\Gamma(\theta^{n+1}\overline{\bu}_h^{n+1}))\gradG \mu^{n+1}_h, \theta^{n+1}\bv_h\right) + (\bbf_h^{n+1}, \bv_h)  \label{NSEh1} \\
 & b(\bu_h^{n+1},q_h)-s(p_h^{n+1},q_h)  = 0  \label{NSEh2}
 \end{align}
for all  $(\bv_h,q_h) \in \bV_h \times Q_h$.
\end{itemize}

At each sub-step of the scheme, a linear problem (Chan--Hilliard type system at step~1 and linearized Navier--Stokes system at step 2)
has to be solved. This allows us to achieve low computational costs, while the scheme is provably stable under relatively mild restrictions.   Moreover, the results of numerical experiments in Sec.~\ref{sec:num_res} do not show  that any restrictions on the discretization parameters are required in practice (Remark~\ref{Rem2} discusses what arguments in our analysis require these restrictions).

Before proceeding with analysis, we note that the above scheme does not modify the advection velocity in  \eqref{eq:CH_FE1} for the purpose of analysis, unlike some other linear decoupled schemes for the NSCH equations found in the literature.  We also avoid another
common helpful trick  to prove  energy stability of the variable density NSCH, namely the modification of the  momentum equation  based on a mass conservation condition of the form
$
\partial_t \rho +\bu\cdot\nablaG c -  \divG \left(M \gradG \mu \right)  = 0,
$
which follow from  \eqref{gracke-3} and \eqref{rho_c}. This modification, however, is not consistent if a non-linear relation between $\rho$ and $c$ is adopted; also it introduces several extra terms in the finite element formulation.

\section{Analysis of the decoupled Finite Element method}\label{sec:analysis_FE}

For the analysis in this section, we assume no forcing term, i.e.~$\bbf^{n+1} = \boldsymbol{0}$ for all $n$.
To avoid technical complications related to handling Killing vector fields (see, e.g. discussion in \cite{bonito2020divergence}),
we shall also assume that the surface does not support any tangential rigid motions, and so $\bV_T^0=\bV_T$.

We further use the $a\lesssim b$ notation if inequality $a\le c\,b$ holds between quantities $a$ and $b$ with a constant $c$ independent
of $h$, $\Delta t$, $\epsilon$, and the position of $\Gamma$ in the background mesh. We give similar meaning to $a\gtrsim b$, and $a\simeq b$ means that both $a\lesssim b$ and $a\gtrsim b$ hold.
\smallskip

 The following lemma will be useful in the proof of the main result, which is reported
 in Theorem \ref{Th:main}.

\begin{lemma}\label{Lemma1} It holds
\begin{equation}\label{eq:Linf}
\|v_h\|_{L^\infty(\Gamma)} \lesssim |\ln h|^{\frac12}\|v_h\|_{H^1(\Gamma)}+ h^{-\frac12}\left\|\bn\cdot\nabla v_h\right\|_{L^2(\Omega_h^\Gamma)}\qquad  \forall\,v_h\in V_h^k.
\end{equation}
For $v_h$ satisfying $\int_\Gamma v_h\,ds=0$, the factor $\|v_h\|_{H^1(\Gamma)}$ on the r.h.s.~can be replaced by $\|\nablaG v_h\|_{L^2(\Gamma)}$.
\end{lemma}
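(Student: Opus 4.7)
The plan is to reduce inequality~\eqref{eq:Linf} to the classical two-dimensional discrete logarithmic Sobolev inequality on a smooth surface, while accounting for the fact that $v_h$ is a bulk polynomial that may vary across $\Gamma$ along the normal direction. The $|\ln h|^{1/2}$ factor is the standard 2D signature, while the $h^{-1/2}\|\bn\cdot\nabla v_h\|_{L^2(\Omega_h^\Gamma)}$ term is precisely the correction one expects in an unfitted setting where the induced ``surface mesh'' $\{T\cap\Gamma:T\in\T_h^\Gamma\}$ fails to be shape-regular because of arbitrarily small element cuts.

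First I would record the 2D discrete Sobolev inequality on a shape-regular quasi-uniform triangulation of $\Gamma$: for a continuous piecewise polynomial $w_h$ of degree $k$ one has $\|w_h\|_{L^\infty(\Gamma)}\lesssim |\ln h|^{1/2}\|w_h\|_{H^1(\Gamma)}$ (cf.\ Brenner--Scott). This is the inequality we ultimately want to invoke, but $v_h|_\Gamma$ is not a priori a function on a shape-regular 2D mesh. To bridge this gap I would use the closest-point projection $\pi:\Omega_h^\Gamma\to\Gamma$ (well-defined in the tubular neighborhood since $\Gamma$ is smooth) and split, for any $\bx$ in some $T\in\T_h^\Gamma$,
\[
v_h(\bx)=v_h(\pi(\bx))+\int_{\pi(\bx)}^{\bx}\bn\cdot\nabla v_h\,dt.
\]
The remainder satisfies $\|v_h-v_h\circ\pi\|_{L^\infty(T)}\lesssim h\,\|\bn\cdot\nabla v_h\|_{L^\infty(T)}$, and a standard FE inverse estimate on the shape-regular tetrahedron $T$ yields $\|\bn\cdot\nabla v_h\|_{L^\infty(T)}\lesssim h^{-3/2}\|\bn\cdot\nabla v_h\|_{L^2(T)}$. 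Taking maxima over $T\in\T_h^\Gamma$ and using $\ell^\infty\le\ell^2$ over a finite number of tetrahedra covering any fixed point produces the second term on the right-hand side of~\eqref{eq:Linf}.

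Next, for the projected piece $v_h\circ\pi$, which is now constant along normals and therefore genuinely a function on $\Gamma$, I would invoke the log-Sobolev inequality quoted above. The point is that $v_h\circ\pi$ can be identified with a piecewise-polynomial function on a \emph{shape-regular} auxiliary 2D mesh on $\Gamma$ (for instance, by grouping small cut pieces with neighbouring tetrahedra, exploiting that $v_h$ is a single polynomial of degree $k$ on the full tetrahedron and thus extends across small cuts without change). The resulting bound $\|v_h\circ\pi\|_{L^\infty(\Gamma)}\lesssim |\ln h|^{1/2}\|v_h\circ\pi\|_{H^1(\Gamma)}$ is then combined with $\|v_h\circ\pi\|_{H^1(\Gamma)}\lesssim \|v_h\|_{H^1(\Gamma)}+h^{-1/2}\|\bn\cdot\nabla v_h\|_{L^2(\Omega_h^\Gamma)}$, which follows from the same normal-expansion argument applied to tangential derivatives. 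Absorbing the (lower-order) normal-derivative contribution on the right gives~\eqref{eq:Linf}.

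The main obstacle will be step two, namely the rigorous identification of $v_h\circ\pi$ with a function on a shape-regular surface mesh without losing polynomial-degree control. The cleanest way around this is to bypass the explicit construction and instead invoke an already-established TraceFEM norm-equivalence result (of Grande--Reusken type, see~\cite{grande2018analysis}) that asserts $\|v_h\|_{H^1(\Gamma)}^2$ is equivalent to a bulk $H^1$-seminorm modulo the stabilizing $\|\bn\cdot\nabla v_h\|_{L^2(\Omega_h^\Gamma)}^2$ term; combined with the 3D-to-2D trace and log-Sobolev inequality in the bulk, this delivers~\eqref{eq:Linf} directly. Finally, the zero-mean refinement is obtained at no extra cost: when $\int_\Gamma v_h\,ds=0$ the Poincar\'e inequality on the closed surface $\Gamma$ gives $\|v_h\|_{L^2(\Gamma)}\lesssim \|\nabla_\Gamma v_h\|_{L^2(\Gamma)}$, so one may replace $\|v_h\|_{H^1(\Gamma)}$ by $\|\nabla_\Gamma v_h\|_{L^2(\Gamma)}$ throughout.
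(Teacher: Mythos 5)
Your overall architecture (split $v_h$ into its value at the closest point on $\Gamma$ plus a normal line integral, control the line integral by $h^{-1/2}\|\bn\cdot\nabla v_h\|_{L^2(\Omega_h^\Gamma)}$ via an inverse estimate, and get the $|\ln h|^{1/2}$ factor from a two-dimensional Sobolev-type inequality on $\Gamma$) matches the spirit of the paper, and your treatment of the normal-derivative remainder is essentially sound. However, the step you yourself flag as the main obstacle is a genuine gap, and neither of your two proposed fixes closes it. First, $v_h\circ\pi$ restricted to $\Gamma$ is just $v_h|_\Gamma$, i.e.\ an ambient polynomial restricted to a curved surface; it is not a piecewise polynomial with respect to any 2D mesh in surface coordinates, and the induced partition $\{T\cap\Gamma\}$ contains arbitrarily thin slivers. ``Grouping small cut pieces with neighbouring tetrahedra'' does not repair shape regularity of the surface patches, so the Brenner--Scott discrete logarithmic Sobolev inequality, which requires both a shape-regular quasi-uniform 2D mesh and piecewise polynomial structure on it, cannot be invoked. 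Second, the fallback via a Grande--Reusken $H^1$ norm equivalence plus ``the log-Sobolev inequality in the bulk'' does not work either: the norm equivalence is an $L^2$-type statement and cannot by itself produce $L^\infty$ control, and the discrete Sobolev inequality in \emph{three} dimensions carries a factor $h^{-1/2}$, not $|\ln h|^{1/2}$ --- the logarithm is a strictly two-dimensional phenomenon, so routing the argument through the bulk loses exactly the gain you need.

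The paper closes this gap by avoiding any discrete 2D inequality on $\Gamma$. It uses the \emph{continuous} Sobolev embedding on the smooth surface, $\|u\|_{L^p(\Gamma)}\le c\,p^{1/2}\|u\|_{H^1(\Gamma)}$ for all finite $p$, which needs no mesh at all; it then transfers the $L^p(\Gamma)$ norm to the bulk strip via the co-area formula (picking up a factor $h^{1/p}$ and the normal-derivative term), passes from the ``reachable'' subregion $\widetilde\Omega_h^\Gamma$ to all of $\Omega_h^\Gamma$ using the ball-containment result of Lemma~7.9 in \cite{grande2018analysis} together with polynomial norm equivalence on each tetrahedron, and finally applies the $L^p\to L^\infty$ inverse inequality $\|u_h\|_{L^\infty(\Omega_h^\Gamma)}\lesssim h^{-3/p}\|u_h\|_{L^p(\Omega_h^\Gamma)}$ on the shape-regular 3D mesh. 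Choosing $p=|\ln h|$ makes $h^{-2/p}$ bounded and turns $p^{1/2}$ into $|\ln h|^{1/2}$. If you want to salvage your write-up, replace your step two by this $L^p$-interpolation argument; the zero-mean refinement via the Poincar\'e inequality is fine as you stated it.
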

\begin{proof}
Denote by $\bp(\bx)\in \Gamma$, $\bx\in\Omega$, the closest point projection on $\Gamma$.
Since $\Gamma$ is smooth, there is a tubular neighborhood  of $\Gamma$
\[
U=\{\bx\in\mathbb{R}^3\,:\, \mbox{dist}(\bx,\Gamma)<d\}, \quad d>0,
\]
such that $\bx=\bp(\bx)+s\bn$, $\bx\in U$, defines
the local coordinate system $(s,\by)$, with $\by=\bp(\bx)$ and $|s|=\mbox{dist}(\bx,\Gamma)$.
We can always assume $h\le h_0=O(1)$, such that $\Omega_h^\Gamma\subset U$.
For $u\in H^1(U)$, we have in $U$
\begin{equation}\label{aux413}
u(\bx)= u(\by)+\int_{0}^{s}\bn\cdot\nabla u(r,\by)\,dr.
\end{equation}
Denote by $\widetilde\Omega_h^\Gamma$ a ``reachable from $\Gamma$'' part of $\Omega_h^\Gamma$ in the following sense: for any $\bx\in \widetilde\Omega_h^\Gamma$ the interval $(\bx,\bp(\bx))$ is completely inside $\Omega_h^\Gamma$.
Let
\[
g_{\pm}(\by)=\pm\sup\{s\in\mathbb{R}\,:\, (\by\pm s\bn,\by)\subset\Omega_h^\Gamma\},\quad \by\in\Gamma,
\]
where $g_{\pm}(\by)$ are piecewise smooth and $\|g_{\pm}\|_{L^\infty(\Gamma)}\lesssim h$.
Thanks to the co-area formula and the smoothness of $\Gamma$, it holds
\begin{equation}\label{aux423}
\int_{\widetilde\Omega_h^\Gamma}|f|\,d\bx\simeq  \int_{\Gamma}\int_{g_{-}}^{g_{+}}|f|\,d\by\,ds,\quad \mbox{for any}~f\in L^1(\widetilde\Omega_h^\Gamma).
\end{equation}

From \eqref{aux413} and \eqref{aux423}, we have:
\[
\|u(\bx)\|_{L^p(\widetilde\Omega_h^\Gamma)}\simeq
\left(\int_{\Gamma}\int_{g_{-}}^{g_{+}} \left|u(\by)+\int_{0}^{s}\bn\cdot\nabla u(r,\by)\,dr\right|^p\,ds\,d\by\right)^{\frac1p},
\]
 for any real exponent $p> 1$.
Triangle inequality and inequality:
\[
\left(\int_{\Gamma}\int_{g_{-}}^{g_{+}} \left|u(\by)\right|^p\, ds\,d\by\right)^{\frac1p}\le |g_{+}-g_{-}|^{\frac1p}\|u\|_{L^p(\Gamma)}
\]
yield
\begin{equation}\label{aux421}
\|u\|_{L^p(\widetilde\Omega_h^\Gamma)}\lesssim
h^{\frac1p}\|u\|_{L^p(\Gamma)}+ \left(\int_{\Gamma}\int_{g_{-}}^{g_{+}} \left|\int_{0}^{s}\bn\cdot\nabla u(r,\by)\,dr\right|^p\,ds\,d\by\right)^{\frac1p}.
\end{equation}
To handle the second term on the right-hand side, we apply H\"{o}lder's inequality  with $p'=p/(p-1)$:
\[
\begin{split}
\left(\int_{\Gamma}\int_{g_{-}}^{g_{+}} \left|\int_{0}^{s}\bn\cdot\nabla u(r,\by)\,dr\right|^p\,ds\,d\by\right)^{\frac1p}
&\le
\left(\int_{\Gamma}\int_{g_{-}}^{g_{+}}\left| \left(\int_{0}^{s}\left|\bn\cdot\nabla u(r,\by)\right|^p\,dr\right)^{\frac1p} \left(\int_{0}^{s}\,dr\right)^{\frac1{p'}}\right|^p\,ds\,d\by\right)^{\frac1p}\\
&=
\left(\int_{\Gamma}\int_{g_{-}}^{g_{+}}\left( \int_{0}^{s}\left|\bn\cdot\nabla u(r,\by)\right|^p\,dr \right) s^{p-1}\,ds\,d\by\right)^{\frac1p}\\
&\le
\left(\int_{\Gamma}\int_{g_{-}}^{g_{+}} \left|\bn\cdot\nabla u(r,\by)\right|^p\,dr\, d\by\right)^{\frac1p} \left(\int_{g_{-}}^{g_{+}} s^{p-1}\,ds\right)^{\frac1p}\\
&\lesssim
 h \left\|\bn\cdot\nabla u\right\|_{L^p(\widetilde\Omega_h^\Gamma)}
\end{split}
\]
Substituting this in \eqref{aux421} and using $\widetilde\Omega_h^\Gamma\subset\Omega_h^\Gamma$ we get
\begin{equation*}
\|u\|_{L^p(\widetilde\Omega_h^\Gamma)}\lesssim
h^{\frac1p}\|u(\bx)\|_{L^p(\Gamma)}+  h \left\|\bn\cdot\nabla u\right\|_{L^p(\Omega_h^\Gamma)} .
\end{equation*}
Letting $u=u_h\in V_h^k$ and applying FE inverse inequality to treat the last term, we arrive at
\begin{equation}\label{aux445}
\|u_h\|_{L^p(\widetilde\Omega_h^\Gamma))}\lesssim
h^{\frac1p}\|u_h\|_{L^p(\Gamma)}+  h^{\frac3p-\frac12} \left\|\bn\cdot\nabla u_h\right\|_{L^2(\Omega_h^\Gamma)} .
\end{equation}

Next, we need the following technical result from Lemma~7.9 in~\cite{grande2018analysis}: There is
$\delta\simeq h$, depending only on the shape regularity of the mesh $\T_h^\Gamma$
such that for any $T\in \T_h^\Gamma$ there exists a ball $B_\delta(T)\subset T\cap \widetilde\Omega_h^\Gamma$ of radius $\delta$. Since on every tetrahedron $u_h$ is polynomial function of a fixed degree by the standard norm equivalence argument, we have
\[
\left(\int_{T}|u_h|^p\,d\bx\right)^{\frac1p} \le  C\left(\int_{B_\delta(T)}|u_h|^p\,d\bx\right)^{\frac1p},
\]
with $C$ depending only on $\delta$ and the shape regularity of $T$. Raising both parts of this inequality to power $p$, summing over all $T\in \T_h^\Gamma$,  raising both  parts to power $1/p$ and using $B_\delta(T)\subset \widetilde\Omega_h^\Gamma$ we get
\[
\|u_h\|_{L^p(\Omega_h^\Gamma)}\lesssim \|u_h\|_{L^p(\widetilde\Omega_h^\Gamma)}
\]

We now use this in \eqref{aux445} and apply another FE inverse inequality to get
\begin{equation}\label{aux470}
\|u\|_{L^\infty(\Gamma)}\le \|u\|_{L^\infty(\Omega_h^\Gamma)}\lesssim h^{-\frac3p}\|u_h\|_{L^p(\Omega_h^\Gamma)}
\lesssim
h^{-\frac2p}\|u_h\|_{L^p(\Gamma)}+  h^{-\frac12} \left\|\bn\cdot\nabla u_h\right\|_{L^2(\Omega_h^\Gamma)}.
\end{equation}
For $u\in H^1(\Gamma)$, recall that the Sobolev embedding theorem implies $u\in L^p(\Gamma)$, $p\in[1,\infty)$, and
\[
\|u\|_{L^p(\Gamma)}\le c\,p^{\frac12} \|u\|_{H^1(\Gamma)}.
\]
This result follows from the corresponding embedding theorem in $\mathbb{R}^2$ by
standard arguments based on the surface local parametrization and partition of unity.
Using this in \eqref{aux470},  we obtain the estimate
\begin{equation}\label{aux485}
\|u\|_{L^\infty(\Gamma)}
\lesssim
h^{-\frac2p} p^{\frac12} \|u_h\|_{H^1(\Gamma)}+  h^{-\frac12} \left\|\bn\cdot\nabla u_h\right\|_{L^2(\Omega_h^\Gamma)}.
\end{equation}
Letting $p=|\ln h|$ proves the result in \eqref{eq:Linf}. Applying the Poincar\'e inequality for the function $u_h$ satisfying
$\int_\Gamma u_h\,ds=0 $ proves the second claim of the lemma.
\end{proof}

Following \cite{Shen_Yang2010}, we modify the double-well potential in \eqref{eq:f0} for the purpose of analysis so that it is $C^2$ smooth but has quadratic growth for large $|c|$. Straightforward calculations give us the following expression for  $f_0'(c)$ with sufficiently large but fixed  $\alpha > 1$:
\begin{align*}
{f}_0'(c)=
 \begin{cases}
\frac{3\alpha^2 - 1}{4} c - \left(\frac{\alpha^3}{4} + \frac{3}{8} \alpha^2 - \frac{1}{8} \right) ,\quad c>\frac{1+\alpha}{2},\\
\left( c^2 - \frac{3}{2} c + \frac{1}{2} \right)c, \quad c\in\left[\frac{1-\alpha}{2},\frac{1+\alpha}{2}\right],\\
\frac{3\alpha^2 - 1}{4} c + \left(\frac{\alpha^3}{4} - \frac{3}{8} \alpha^2 + \frac{1}{8} \right),\quad c<\frac{1-\alpha}{2}.
 \end{cases}
\end{align*}
Function $f_0'(x)$ satisfies the following Lipschitz  condition with $L=\frac{3\alpha^2 - 1}{4}$:
\begin{align}\label{variation}
-\frac{1}{4}\leq\frac{{f}_0'(x)-{f}_0'(y)}{ x-y}\leq{}L,\quad\forall x,y \in \mathbb{R},~  x\neq y,
\end{align}
and growth condition
	\begin{align}\label{growth}
	|{f}_0'(x)|\leq L |x|. 
	\end{align}

\begin{theorem}\label{Th:main}
\label{Th1} Assume  $h$ and $\Delta t$ satisfy $\Delta t \le c |\ln h|^{-1} \epsilon$ and  $h \le c |\ln h|^{-1} \min\{\Delta t, |\ln h|^{-\frac12} \epsilon |\Delta t|^{\frac12}\}$  for some sufficiently small $c>0$, independent of $h$, $\Delta t$, $\epsilon$ and position of $\Gamma$ in the background mesh. Then, it holds
 \begin{equation}
 \int_\Gamma\left(\rho^{N}|\overline{\bu}_h^{N}|^2 +\frac{\sigma_\gamma}{\epsilon}f_0(c^{N}_h)\right) ds  + a_c(c^{N}_h,c^{N}_h) + \sum_{n=1}^{N}\Delta t \left( a(\eta^{n}; \bu^{n}_h,\bu^{n}_h)+ a_\mu(\mu^{n}, \mu^{n})+ s_h(p_h^{n},p_h^{n})\right)
 \le
 K,  \label{stab_est}
\end{equation}
for all $N=1,2,\dots$, with $K=\int_\Gamma\left(\rho^{0}|\bu^{0}_h|^2 +\frac{\sigma_\gamma}{\epsilon}f_0(c^{0}_h)\right) ds + a_c(c^{0}_h,c^{0}_h)$.
\end{theorem}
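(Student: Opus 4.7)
My strategy is to mimic the continuous energy identity~\eqref{aux271} at the discrete level, using Lemma~\ref{prop1} to handle the convection form and the discrete Sobolev-log inequality from Lemma~\ref{Lemma1} to close the estimates produced by the splitting. First I test the momentum equation~\eqref{NSEh1} with $\bv_h=\bu_h^{n+1}$ and the continuity equation~\eqref{NSEh2} with $q_h=p_h^{n+1}$ and add; the $b$-forms cancel, leaving the dissipations $a(\eta^{n+1};\bu_h^{n+1},\bu_h^{n+1})+s(p_h^{n+1},p_h^{n+1})$ on the left. Using $2a(a-b)=a^2-b^2+(a-b)^2$ together with $\rho^n=\rho^{n+1}-(\rho^{n+1}-\rho^n)$, the discrete time-derivative term $(\rho^n[\overline{\bu}_h]_t^{n+1},\bu_h^{n+1})$ splits into the telescoping kinetic-energy increment $\tfrac{1}{2\Delta t}\int_\Gamma(\rho^{n+1}|\overline{\bu}_h^{n+1}|^2-\rho^n|\overline{\bu}_h^n|^2)\,ds$, a non-negative velocity-jump dissipation $\tfrac{1}{2\Delta t}\int_\Gamma\rho^n|\overline{\bu}_h^{n+1}-\overline{\bu}_h^n|^2\,ds$, and an unwanted remainder $-\tfrac{1}{2\Delta t}\int_\Gamma(\rho^{n+1}-\rho^n)|\overline{\bu}_h^{n+1}|^2\,ds$. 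Lemma~\ref{prop1} rewrites $c(\rho^{n+1};\bu_h^n,\bu_h^{n+1},\bu_h^{n+1})=-\tfrac12\int_\Gamma(\theta^{n+1})^2\divG(c_h^{n+1}\overline{\bu}_h^n)|\overline{\bu}_h^{n+1}|^2\,ds$, while the identity $(\nabla_\Gamma(\theta\bu))\nablaG\mu\cdot\theta\bu=\tfrac12\nablaG|\theta\bu|^2\cdot\nablaG\mu$ turns the $M$-correction on the right of \eqref{NSEh1} into $\tfrac{M}{2}\int_\Gamma\nablaG|\theta^{n+1}\overline{\bu}_h^{n+1}|^2\cdot\nablaG\mu_h^{n+1}\,ds$.

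I next test \eqref{eq:CH_FE1} with $v_h=\sigma_\gamma\mu_h^{n+1}$ and \eqref{eq:CH_FE2} with $g_h=\sigma_\gamma[c_h]_t^{n+1}$ (both in $V_h^k$) and add, so that the $(\mu_h^{n+1},[c_h]_t^{n+1})$ contributions cancel. The term $a_c(c_h^{n+1},[c_h]_t^{n+1})$ telescopes by polarization, and the potential term satisfies, thanks to the Lipschitz bound~\eqref{variation}, $(f_0'(c_h^n),c_h^{n+1}-c_h^n)\ge\int_\Gamma(f_0(c_h^{n+1})-f_0(c_h^n))\,ds-\tfrac{L}{2}\|c_h^{n+1}-c_h^n\|^2$, whose $\tfrac{L\Delta t}{2\epsilon}\|[c_h]_t^{n+1}\|^2$ penalty is absorbed by the $\gamma_c$-stabilization once $\gamma_c>L/2$. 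I then multiply the momentum and Cahn--Hilliard identities by $\Delta t$ and add: the two surface-tension couplings combine into a mismatch $\sigma_\gamma\Delta t\int_\Gamma c_h^{n+1}\nablaG\mu_h^{n+1}\cdot(\overline{\bu}_h^n-\overline{\bu}_h^{n+1})\,ds$, arising because the CH advection uses $\bu_h^n$ while the force in \eqref{NSEh1} uses $\bu_h^{n+1}$. Applying Cauchy--Schwarz, Young's inequality, and the $L^\infty$ bound $\|c_h^{n+1}\|_{L^\infty(\Gamma)}\lesssim|\ln h|^{1/2}(\cdots)$ from Lemma~\ref{Lemma1} absorbs this mismatch into the velocity-jump dissipation and a fraction of $\Delta t\,\sigma_\gamma a_\mu(\mu_h^{n+1},\mu_h^{n+1})$, which is the step that forces $\Delta t\lesssim|\ln h|^{-1}\epsilon$.

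The delicate step, and the main obstacle, is handling together (i) the remainder $-\tfrac{1}{2\Delta t}\int_\Gamma(\rho^{n+1}-\rho^n)|\overline{\bu}_h^{n+1}|^2\,ds$, (ii) the convection after Lemma~\ref{prop1}, and (iii) the rewritten $M$-correction. Formally, testing \eqref{eq:CH_FE1} with $w=\tfrac12(\theta^{n+1})^2|\overline{\bu}_h^{n+1}|^2$ and applying the tangential integration-by-parts identity gives $c(\rho^{n+1};\bu_h^n,\bu_h^{n+1},\bu_h^{n+1})=([c_h]_t^{n+1},w)+\tfrac{M}{2}\int_\Gamma\nablaG|\theta^{n+1}\overline{\bu}_h^{n+1}|^2\cdot\nablaG\mu_h^{n+1}\,ds+\tau_\mu\text{-crossterm}$, and the convexity of $\rho$ (guaranteed by the smooth monotone choice $\rho'=\theta^2$) yields the pointwise bound $\rho^{n+1}-\rho^n\le(\theta^{n+1})^2(c_h^{n+1}-c_h^n)$. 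Combining these two facts makes the sum of (i), (ii), and (iii) non-negative up to the $\tau_\mu$-crossterm, which is absorbed by the interior stabilization part of $\Delta t\,\sigma_\gamma a_\mu$. Since $w\notin V_h^k$, the preceding argument is legitimate only when $w$ is replaced by a finite-element interpolant $w_h\in V_h^k$; the associated residuals, such as $(c_h^{n+1}\overline{\bu}_h^n,\nablaG(w-w_h))$ and $a_\mu(\mu_h^{n+1},w-w_h)$, are controlled by interpolation estimates, inverse inequalities on $V_h^k$, and the $L^\infty$-log bounds of Lemma~\ref{Lemma1} applied to $\overline{\bu}_h^{n+1}$, $\theta^{n+1}$ and $\nablaG\mu_h^{n+1}$. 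Budgeting these residuals against the dissipations $\Delta t\,a(\eta^{n+1};\bu_h^{n+1},\bu_h^{n+1})$, $\Delta t\,\sigma_\gamma a_\mu(\mu_h^{n+1},\mu_h^{n+1})$ and the $a_c$-increment produces exactly the scaling $h\le c|\ln h|^{-1}\min\{\Delta t,\,|\ln h|^{-1/2}\epsilon\Delta t^{1/2}\}$ assumed in the theorem. Summing the resulting inequality over $n=0,\ldots,N-1$ then telescopes to the stability estimate \eqref{stab_est}.
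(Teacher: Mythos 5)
Your plan follows essentially the same route as the paper's proof: the same test functions for the momentum and Cahn--Hilliard equations, Lemma~\ref{prop1} for the convection form, Lemma~\ref{Lemma1} for the logarithmic $L^\infty$ bounds, the convexity inequality $\rho^{n+1}-\rho^{n}\le(\theta^{n+1})^2(c_h^{n+1}-c_h^{n})$ combined with testing the discrete transport equation by a finite-element representative of $|\theta^{n+1}\overline{\bu}_h^{n+1}|^2$, and the same identification of which residuals generate the restrictions on $\Delta t$ and $h$. There is, however, one genuine gap in the closing step. You present the conclusion as ``sum over $n$ and telescope,'' but the per-step inequality you derive cannot be closed without an induction on $N$ in which the asserted bound \eqref{stab_est} at step $n$ is used as a hypothesis. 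Concretely: (a) the Cahn--Hilliard advection term $\Delta t\,(c_h^{n+1}\bu_h^{n},\nablaG\mu_h^{n+1})$ can only be absorbed after bounding $\|\overline{\bu}_h^{n}\|$ by $\sqrt{K/\rho_2}$, which is exactly the induction hypothesis; (b) absorbing the surface-tension mismatch and the projection residual requires the intermediate a priori bounds $a_c(c_h^{n+1},c_h^{n+1})\le C$, $\|[c_h]_t^{n+1}\|\lesssim \epsilon^{\frac12}/\Delta t$, and $a(\eta^{n};\bu_h^{n},\bu_h^{n})\lesssim K/\Delta t$, which are obtained by running the Cahn--Hilliard energy estimate \emph{first} (the paper's bound \eqref{aux613b}) using the induction hypothesis, before touching the momentum equation. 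Without these a priori bounds the nonlinear terms are super-linear in the unknown energies (e.g.~products of $\|\bu_h^{n}\|$, $a_c^{1/2}$ and $a_\mu^{1/2}$) and cannot be absorbed into the dissipation by Young's inequality, so the plain telescoping argument fails. This two-stage, inductive structure is the one genuinely missing idea.

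Two smaller points. First, to make the $(\mu_h^{n+1},[c_h]_t^{n+1})$ terms cancel you must test \eqref{eq:CH_FE2} with $g_h=-\sigma_\gamma[c_h]_t^{n+1}$, not $+\sigma_\gamma[c_h]_t^{n+1}$; with your sign the terms double rather than cancel. Second, the paper replaces $|\theta^{n+1}\overline{\bu}_h^{n+1}|^2$ not by a generic interpolant but by the $H^1$-type Ritz projection $\mathcal{P}_h$ associated with the $a_\mu$ form; this choice makes $a_\mu(\mu_h^{n+1},\mathcal{P}_h(w))=M(\nablaG\mu_h^{n+1},\nablaG w)$ exactly, so the $a_\mu$-residual (including the $\tau_\mu$ normal-derivative cross term you propose to absorb) never appears, and the only residual left is $I(e_h)$ from the time-derivative and advection terms. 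Your variant with a generic interpolant is workable in principle but would require the additional estimate you sketch, and it is cleaner to adopt the projection.
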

\begin{proof}
We use induction in $N$ to prove \eqref{stab_est}. For $N=0$, the estimate is trivial and  provided by the initial condition.
For the induction step, assume that it holds with $N=n$.

Letting $v_h=\mu^{n+1}_h$ in \eqref{eq:CH_FE1} and $g_h=-\left[c_h\right]_{t}^{n+1}$ in \eqref{eq:CH_FE2} and
adding the two equations together, we get
\begin{multline}\label{aux549}
-(c^{n+1}_h \bu^{n}_h, \nablaG\mu^{n+1}_h)
+  a_\mu (\mu^{n+1}_h,\mu^{n+1}_h)+ \left(\frac{1}{\epsilon}\left[c_h\right]_{t}^{n+1}, {f}_0'(c^n_h)\right) \\
 + \frac{1}{2\Delta t}\left( a_c(c^{n+1}_h,c^{n+1}_h)-a_c(c^{n}_h,c^{n}_h)+ |\Delta t|^2 a_c(\left[c_h\right]_{t}^{n+1},\left[c_h\right]_{t}^{n+1}) \right)
+ \frac{\gamma_c\Delta t}{\epsilon}\|\left[c_h\right]_{t}^{n+1}\|^2=0.
\end{multline}
With  the truncated Taylor expansion 
$
f_0(c^{n+1}_h)=f_0(c^n_h)+|\Delta t|\left[c_h\right]_{t}^{n+1} {f}_0'(c^n_h)+ |\Delta t|^2 |\left[c_h\right]_{t}^{n+1}|^2 {f}_0''(\xi^n),
$
and \eqref{variation}  we get:
\[
\begin{split}
 \left(\frac{1}{\epsilon}\left[c_h\right]_{t}^{n+1}, {f}_0'(c^n_h)\right)&=
\frac{1}{\epsilon}\int_\Gamma\frac{f_0(c^{n+1}_h)-f_0(c^n_h)}{\Delta t}\,ds-
\frac{\Delta t}{2\epsilon}\int_\Gamma|\left[c_h\right]_{t}^{n+1}|^2{f}_0''(\xi^n) ds\\
&\ge
\frac{1}{\epsilon}\int_\Gamma\frac{f_0(c^{n+1}_h)-f_0(c^n_h)}{\Delta t}\,ds-
\frac{L\Delta t}{2\epsilon}\|\left[c_h\right]_{t}^{n+1}\|^2.
\end{split}
\]
Since  the second term at the right-hand side has a negative sign, we
let $\gamma_c$ be large enough in order to cancel it with the $\frac12$ of the last term on the left-hand side of \eqref{aux549}.
We obtain
\begin{multline}\label{aux573}
 a_\mu (\mu^{n+1}_h,\mu^{n+1}_h)+ \frac{1}{\epsilon}\int_\Gamma\frac{f_0(c^{n+1}_h)-f_0(c^n_h)}{\Delta t}\,ds
 + \frac{1}{2\Delta t}\left( a_c(c^{n+1}_h,c^{n+1}_h)-a_c(c^{n}_h,c^{n}_h)\right) \\
 + \Delta t\left( \frac{1}{2} a_c(\left[c_h\right]_{t}^{n+1},\left[c_h\right]_{t}^{n+1})
+ \frac{\gamma_c}{2\epsilon}\|\left[c_h\right]_{t}^{n+1}\|^2 \right) \le (c^{n+1}_h \bu^{n}_h, \nablaG\mu^{n+1}_h).
\end{multline}
After re-arranging terms, multiplying by $\Delta t$, and dropping some non-negative terms on the left-hand side,
we obtain the following inequality
\begin{multline}\label{aux573b}
 \frac{1}{\epsilon}\int_\Gamma f_0(c^{n+1}_h) ds +\Delta t a_\mu (\mu^{n+1}_h,\mu^{n+1}_h)
 + \frac{1}{2}a_c(c^{n+1}_h,c^{n+1}_h)
+\Delta t^2 \frac{\gamma_c}{2\epsilon}\|\left[c_h\right]_{t}^{n+1}\|^2
 \\
\le  \frac{1}{\epsilon}\int_\Gamma f_0(c^{n}_h) ds+\frac{1}{2} a_c(c^{n}_h,c^{n}_h) + \Delta t (c^{n+1}_h \bu^{n}_h, \nablaG\mu^{n+1}_h).
\end{multline}

The first two terms on the right-hand side of \eqref{aux573b} are bounded due to the induction assumption.
To handle the third term on the right-hand side, we let $c_0=|\Gamma|^{-1}\int_\Gamma c_h^{n+1}\,ds$ and
use Lemma~\ref{Lemma1} that yields $\|c^{n+1}_h-c_0\|_{L^\infty(\Gamma)}\lesssim |\ln h|^{\frac12}\epsilon^{-\frac12} |a_c(c^{n+1}_h, c^{n+1}_h)|^{\frac12} $ by the definition of the $a_c$ form in \eqref{eq:a_c}.
This, the Cauchy inequality, and the equality $\bu^{n}_h\cdot\nablaG\mu^{n+1}_h=\overline{\bu}_h^{n}\cdot\nablaG\mu^{n+1}_h$ help us with the following estimate:
\begin{equation}\label{aux613}
\begin{split}
\Delta t |(c^{n+1}_h& \bu^{n}_h, \nablaG\mu^{n+1}_h)|\le \Delta t |(\, (c^{n+1}_h-c_0) \bu^{n}_h, \nablaG\mu^{n+1}_h)| +\Delta t |(c_0 \bu^{n}_h, \nablaG\mu^{n+1}_h)|\\
&\le \Delta t\|\overline{\bu}_h^{n}\| \|c^{n+1}_h-c_0\|_{L^\infty(\Gamma)}\|\nablaG \mu^{n+1}_h\| + \Delta t \|\overline{\bu}_h^{n}\| |c_0| \|\nablaG \mu^{n+1}_h\|\\
&\le \frac{C\Delta t}{\sqrt{\epsilon}}\|\overline{\bu}_h^{n}\||\ln h|^{\frac12} |a_c(c^{n+1}_h, c^{n+1}_h)|^{\frac12}  \|\nablaG \mu^{n+1}_h\| + \Delta t \|\overline{\bu}_h^{n}\| |c_0| \|\nablaG \mu^{n+1}_h\|\\
&\le  \Delta t |a_\mu(\mu^{n+1}_h, \mu^{n+1}_h)|^{\frac12} \sqrt{\frac{K}{\rho_2M}} \left( \frac{C|\ln h|^{\frac12}}{\sqrt{\epsilon}} |a_c(c^{n+1}_h, c^{n+1}_h)|^{\frac12}   + |c_0| \right).
\end{split}
\end{equation}
In last inequality in \eqref{aux613}, we used the induction assumption to estimate $\|\bu^{n}_h\|$ and the fact
that $\rho^n\ge\rho_2$ by definition of the $\rho(c)$ function.  Letting $v_h=1$ in \eqref{eq:CH_FE1}  we have
\[
\int_\Gamma \left[c_h\right]_{t}^{n+1}\,ds=0\quad\Rightarrow\quad c_0 = |\Gamma|^{-1}\int_\Gamma c_h^{0}\,ds.
\]
We conclude that $|c_0|$ in \eqref{aux613} can be bounded by a constant depending only on the initial data.
Then, using Young's inequality in \eqref{aux613} we get the following bound
\[
\Delta t |(\bu^{n}_h\cdot\nablaG \mu^{n+1}_h, c^{n+1}_h)|
\le
 \frac14 a_c(c^{n+1}_h, c^{n+1}_h) +  |\Delta t|^2 \frac{C_1|\ln h| }{\epsilon}a_\mu(\mu^{n+1}_h, \mu^{n+1}_h) + C_2,
\]
with some constants $C_1$, $C_2$ independent of $h$, $\Delta t$, $\epsilon$, and position of $\Gamma$ in the
background mesh.
Using this back in \eqref{aux573b} with $\Delta t$ satisfying assumptions of the theorem,  and applying \eqref{stab_est} for $N=n$ for the remainder terms on the right-hand side of  \eqref{aux573b}, we get:
\begin{equation}\label{aux613b}
 \frac{1}{\epsilon}\int_\Gamma f_0(c^{n+1}_h) ds +\frac{\Delta t}{2}  a_\mu (\mu^{n+1}_h,\mu^{n+1}_h)
 + \frac{1}{4}a_c(c^{n+1}_h,c^{n+1}_h) +\frac{\Delta t^2 \gamma_c}{2\epsilon}\|\left[c_h\right]_{t}^{n+1}\|^2  \le C,
\end{equation}
with some $C>0$ independent of $h$, $\Delta t$, $\epsilon$ and the position of $\Gamma$ in the mesh. We need this bound later in the proof.

 Let  $\bv_h=\bu^{n+1}_h$ in \eqref{NSEh1} and $q_h=-p_h$ in \eqref{NSEh2}, and add the two equations together. We also apply Lemma~\ref{prop1}
 to handle the $c$-form. This brings us to the following equality:
 \begin{align}
&\frac{1}{2\Delta t}\left( \|(\rho^n)^{\frac12}\overline{\bu}_h^{n+1}\|^2-\|(\rho^n)^{\frac12}\overline{\bu}_h^n\|^2+ |\Delta t|^2\|(\rho^n)^{\frac12}\left[\overline{\bu}_h\right]_t^{n+1}\|^2\right) \cl
& \quad - \frac12 \int_\Gamma |\theta^{n+1}|^2\divG (c_h^{n+1}\overline{\bu}_h^{n})|\overline{\bu}_h^{n+1}|^2\, ds
+  a(\eta^{n+1};\bu^{n+1}_h,\bu^{n+1}_h) + s(p_h^{n+1},p_h^{n+1}) \cl
& \quad =  -(\sigma_\gamma c^{n+1}_h\nablaG  \mu^{n+1}_h , \bu^{n+1}_h)
+ M \left((\nabla_\Gamma(\theta^{n+1}\overline{\bu}_h^{n+1}))\gradG \mu^{n+1}_h, \theta^{n+1}\bu^{n+1}_h\right). \el
\end{align}
By adding $\pm\frac{1}{2\Delta t}\|(\rho^{n+1})^{\frac12}\overline{\bu}_h^{n+1}\|^2$ and re-arranging terms, we get
\begin{align}
&\frac{1}{2\Delta t}\left(\|(\rho^{n+1})^{\frac12}\overline{\bu}_h^{n+1}\|^2-\|(\rho^n)^{\frac12}\overline{\bu}_h^n\|^2+|\Delta t|^2\|(\rho^n)^{\frac12}\left[\overline{\bu}_h\right]_t^{n+1}\|^2\right) \cl
&\quad - \frac12\int_\Gamma\frac{\rho^{n+1}-\rho^n}{\Delta t}|\overline{\bu}_h^{n+1}|^2\,ds
- \frac12 \int_\Gamma |\theta^{n+1}|^2\divG (c_h^{n+1}\overline{\bu}_h^{n})|\overline{\bu}_h^{n+1}|^2\, ds \cl
&\quad +  a(\eta^{n+1};\bu^{n+1}_h,\bu^{n+1}_h) + s(p_h^{n+1},p_h^{n+1}) \cl
&\quad  =  -(\sigma_\gamma c^{n+1}_h\nablaG  \mu^{n+1}_h , \bu^{n+1}_h)
+  M \left((\nabla_\Gamma(\theta^{n+1}\overline{\bu}_h^{n+1}))\gradG \mu^{n+1}_h, \theta^{n+1}\bu^{n+1}_h\right). \label{aux334_1}
\end{align}
Denote by $\mathcal{P}_h\,:\,H^1(\Gamma)\to V^k_h$ an $H^1$-type projection into $V^k_h$ given by the $a_\mu$ bilinear form:
\[
a_\mu(\mathcal{P}_h(u),  v_h)=  M \left(\gradG u,  \gradG v_h\right)\quad\forall\, v_h\in V_h^k.
\]
By standard analysis of the TraceFEM for the Laplace-Beltrami problem, e.g. \cite{reusken2015analysis}, we   have
\begin{equation}\label{approx}
  \|u-\mathcal{P}_h(u)\|\lesssim h  \|\gradG u\|.
\end{equation}
Since  $\rho$ is a convex function of $c$ and $\frac{d \rho}{dc}\ge0$, we have
\begin{equation}
\rho^{n+1}-\rho^{n}\le \frac{d\rho}{d c}\Big|_{c^{n+1}_h}(c^{n+1}_h-c^{n}_h)= |\theta^{n+1}|^2(c^{n+1}_h-c^{n}_h). \label{convex_rho}
\end{equation}
Using \eqref{convex_rho} and  eq.~\eqref{eq:CH_FE1} with $v_h=\mathcal{P}_h\left(|\theta^{n+1}\overline{\bu}_h^{n+1}|^2\right)$,
for the terms on the second line of \eqref{aux334_1}
we obtain
\begin{align}
&\int_\Gamma\frac{\rho^{n+1}-\rho^n}{\Delta t}|\overline{\bu}_h^{n+1}|^2
+ |\theta^{n+1}|^2\divG (c_h^{n+1}\overline{\bu}_h^{n})|\overline{\bu}_h^{n+1}|^2\, ds \cl
& \quad \le \int_\Gamma \left(\frac{c^{n+1}-c^n}{\Delta t} + \divG (c_h^{n+1}\overline{\bu}_h^{n})\right)|\theta^{n+1}\overline{\bu}_h^{n+1}|^2\, ds\cl
& \quad = -\int_\Gamma M \gradG \mu^{n+1}_h \cdot \gradG |\theta^{n+1}\overline{\bu}_h^{n+1}|^2\,ds + I(e_h), \label{541}
\end{align}
with
\[
I(e_h)=\int_\Gamma \left(\frac{c^{n+1}_h-c^n_h}{\Delta t} + \divG (c_h^{n+1}\overline{\bu}_h^{n})\right)e_h\, ds,\quad e_h\coloneqq|\theta^{n+1}\overline{\bu}_h^{n+1}|^2-\mathcal{P}_h\left(|\theta^{n+1}\overline{\bu}_h^{n+1}|^2\right).
\]

Next, we use \eqref{541} to simplify \eqref{aux334_1} as follows: 
\begin{multline}
\frac{1}{2\Delta t}\left(\|(\rho^{n+1})^{\frac12}\overline{\bu}_h^{n+1}\|^2-\|(\rho^n)^{\frac12}\overline{\bu}_h^n\|^2+|\Delta t|^2\|(\rho^n)^{\frac12}\left[\overline{\bu}_h\right]_t^{n+1}\|^2\right)
 \\
 +  a(\eta^{n+1};\bu^{n+1}_h,\bu^{n+1}_h) + s(p_h^{n+1},p_h^{n+1}) \le    -(\sigma_\gamma c^{n+1}_h\nablaG  \mu^{n+1}_h , \bu^{n+1}_h) + I(e_h). \label{aux545}
\end{multline}
After adding \eqref{aux573} multiplied by $\sigma_\gamma$ to \eqref{aux545} and dropping some non-negative terms on the
left-hand side, we arrive at
\begin{align}
& \frac{1}{2\Delta t}\left(\|(\rho^{n+1})^{\frac12}\overline{\bu}_h^{n+1}\|^2-\|(\rho^{n})^{\frac12}\overline{\bu}_h^n\|^2 + |\Delta t|^2 \|(\rho^{n})^{\frac12}\left[\overline{\bu}_h\right]_t^{n+1}\|^2\right. \cl
& \quad \left. + \sigma_\gamma (a(c^{n+1}_h,c^{n+1}_h) -a(c^{n}_h,c^{n}_h))+\frac{2 \sigma_\gamma}{\epsilon}\int_\Gamma(f_0(c^{n+1}_h)-f_0(c^n_h)) \right) ds + s_h(p_h^{n+1},p_h^{n+1})\cl
& \quad + a(\eta^{n+1};\bu^{n+1}_h,\bu^{n+1}_h) +\sigma_\gamma a_\mu(\mu^{n+1}_h,\mu^{n+1}_h) \cl
& \quad \le - \Delta t(\sigma_\gamma c^{n+1}_h \nablaG \mu^{n+1}_h, \left[\bu_h\right]_t^{n+1})+I(e_h).  \label{aux568}
\end{align}

It remains to estimate the terms on the right-hand side of \eqref{aux568}.
We handle the first term by invoking the result of Lemma~\ref{Lemma1}  as follows:
\begin{align}
\Delta t|(\sigma_\gamma &c^{n+1}_h \nablaG \mu^{n+1}_h, \left[\bu_h\right]_t^{n+1})| \le
 \sigma_\gamma \Delta t |((c^{n+1}_h-c_0)\nablaG \mu^{n+1}_h, \left[\bu_h\right]_t^{n+1})|+  \sigma_\gamma \Delta t |(c_0\nablaG \mu^{n+1}_h, \left[\bu_h\right]_t^{n+1})|\cl
               &\le  \sigma_\gamma \Delta t\|c^{n+1}_h-c_0\|_{L^\infty(\Gamma)}\|\nablaG \mu^{n+1}_h\| \|\left[\overline{\bu}_h\right]_t^{n+1}\|+  \sigma_\gamma \Delta t  |c_0| \|\nablaG \mu^{n+1}_h\| \|\left[\overline{\bu}_h\right]_t^{n+1}\| \cl
               &\le \frac{C}{\rho_2^{\frac12}\epsilon^{\frac12}} |\ln h|^{\frac12} \sigma_\gamma \Delta t |a_c(c^{n+1}_h, c^{n+1}_h)|^{\frac12} \|\nablaG \mu^{n+1}_h\| \|(\rho^{n})^{\frac12}\left[\overline{\bu}_h\right]_t^{n+1}\| \cl
               &\qquad+   \frac{ \sigma_\gamma \Delta t}{\rho_2^{\frac12}}  |c_0| \|\nablaG \mu^{n+1}_h\| \|(\rho^{n})^{\frac12}\left[\overline{\bu}_h\right]_t^{n+1}\|. \label{aux583}
\end{align}
Thanks to the  a priori bound $a_c(c^{n+1}_h, c^{n+1}_h)\le C$ from \eqref{aux613b},
estimate \eqref{aux583} yields
\begin{equation}\label{aux643}
\Delta t|( \sigma_\gamma c^{n+1}_h \nablaG \mu^{n+1}_h, \left[\bu_h\right]_t^{n+1})| \le
  C  \sigma_\gamma \Delta t ({\epsilon^{-\frac12}} |\ln h|^{\frac12} +1) \|(\rho^{n})^{\frac12}\left[\overline{\bu}_h\right]_t^{n+1}\|  |a_\mu(\mu^{n+1}_h, \mu^{n+1}_h)|^{\frac12}.
\end{equation}
With the help of $\Delta t\le C|\ln h|^{-1} \epsilon$ for sufficiently small $C$ and $\epsilon\lesssim 1$ we obtain
\begin{align}
\Delta t|(\sigma_\gamma c^{n+1}_h \nablaG \mu^{n+1}_h, \left[\bu_h\right]_t^{n+1})|
&\le
 \frac{\Delta t}{2}\|(\rho^{n})^{\frac12}\left[\overline{\bu}_h\right]_t^{n+1}\|^2+ C \sigma_\gamma \Delta t({\epsilon^{-\frac12}} |\ln h|^{\frac12} +1)^2 a_\mu(\mu^{n+1}_h, \mu^{n+1}_h)\cl
 &\le
 \frac{\Delta t}{2}\|(\rho^{n})^{\frac12}\left[\overline{\bu}_h\right]_t^{n+1}\|^2+ \frac12 \sigma_\gamma a_\mu(\mu^{n+1}_h, \mu^{n+1}_h).
 \label{aux649}
 \end{align}

Now, we proceed with the terms in $I_h$. For the first terms in $I_h$,
the Cauchy-Schwarz inequality and estimate \eqref{approx} for the $L^2(\Gamma)$ norm of $e_h$ gives:
\begin{equation}\label{aux978}
\int_\Gamma \left(\frac{c_h^{n+1}-c_h^n}{\Delta t}\right)e_h\, ds\le \|\left[c_h\right]_t^{n+1}\|
\|e_h\|
\lesssim h \|\left[c_h\right]_t^{n+1}\|
\|\nablaG(|\theta^{n+1}\overline{\bu}_h^{n+1}|^2)\|
\end{equation}
To estimate the right-most factor, we need the inequalities
\[
\|\overline{\bu}_h^{n+1}\|^2_{L^\infty(\Gamma)}\lesssim |\ln h|\|\overline{\bu}_h^{n+1}\|^2_{H^1(\Gamma)}+ h^{-1}\|\bn\cdot\nabla|\overline{\bu}_h^{n+1}\|^2_{L^2(\Omega_h^\Gamma)} \lesssim |\ln h|a(\eta^{n+1},\bu_h^{n+1},\bu_h^{n+1}),
\]
which follow by applying Lemma~\ref{Lemma1} componentwise and then using the Korn inequality \eqref{Korn} and the fact that $\eta^{n+1}$ is uniformly bounded from below (see the definition in \eqref{eq:cut-off}).
Thanks to $0\le\theta^2\lesssim1$ and $0\le\frac{d\theta^2}{dc}=\frac{d^2\rho}{dc^2}\lesssim1$,  we have
\begin{align}
\|\nablaG(|\theta^{n+1}\overline{\bu}_h^{n+1}|^2) \| &\lesssim
\||\overline{\bu}_h^{n+1}|^2\nablaG c^{n+1}_h\|+ \|\nablaG |\overline{\bu}_h^{n+1}|^2\| \cl
&\lesssim
\|\overline{\bu}_h^{n+1}\|^2_{L^\infty(\Gamma)}\|\nablaG c^{n+1}_h\|+ \|\overline{\bu}_h^{n+1}\|_{L^\infty(\Gamma)}\|\nablaG \overline{\bu}_h^{n+1}\| \cl
&\lesssim
|\ln h| a(\eta^{n+1},\bu_h^{n+1},\bu_h^{n+1}) \|\nablaG c^{n+1}_h\|+ |\ln h|^{\frac12} a(\eta^{n+1},\bu_h^{n+1},\bu_h^{n+1}) \cl
&\lesssim
(|\ln h|\epsilon^{-\frac12} + |\ln h|^{\frac12})a(\eta^{n+1},\bu_h^{n+1},\bu_h^{n+1}) \cl
&\lesssim
|\ln h|\epsilon^{-\frac12}a(\eta^{n+1},\bu_h^{n+1},\bu_h^{n+1}). \el
\end{align}
Using this together with the a priori bound for $\|\left[c_h\right]_t^{n+1}\|$ from  \eqref{aux613b} in \eqref{aux978}
and using the assumption  $h \le c |\ln h|^{-1} \Delta t$ for sufficiently small $c$, we have
\begin{equation}\label{aux998}
\int_\Gamma \left(\frac{c_h^{n+1}-c_h^n}{\Delta t}\right)e_h\, ds\le c\, h|\ln h| \Delta t^{-1} a(\eta^{n+1},\bu_h^{n+1},\bu_h^{n+1})
\le \frac14 a(\eta^{n+1},\bu_h^{n+1},\bu_h^{n+1}).
\end{equation}

Next, we treat the second term in $I(e_h)$. Using Cauchy--Schwarz inequality and  estimating $\|e_h\|$ as above we have
\begin{equation}\label{aux1003}
\int_\Gamma \divG (c_h^{n+1}\overline{\bu}_h^{n})e_h\, ds \lesssim
h|\ln h|\epsilon^{-\frac12}\| \divG (c_h^{n+1}\overline{\bu}_h^{n})\| a(\eta^{n+1},\bu_h^{n+1},\bu_h^{n+1}).
\end{equation}
By the triangle inequality
\[
\| \divG (c_h^{n+1}\overline{\bu}_h^{n})\|\le
\| c_h^{n+1} \divG (\overline{\bu}_h^{n})\| + \|\bu^{n}_h \cdot \nablaG c_h^{n+1} \|.
\]
Each term on the right hand side can be treated individually by invoking Lemma~\ref{Lemma1}, a priori bound from  \eqref{aux613b},
and induction assumption:
\[
\begin{split}
\| c_h^{n+1} \divG (\overline{\bu}_h^{n})\|&\le \| c_h^{n+1}\|_{L^\infty(\Gamma)} \| \divG (\overline{\bu}_h^{n})\|\\
&\lesssim |\ln h|^{\frac12}\| c_h^{n+1}\|_{H^1(\Gamma)} \| \divG (\overline{\bu}_h^{n})\|\\
&\lesssim |\ln h|^{\frac12}\epsilon^{-\frac12}\| \divG (\overline{\bu}_h^{n}) \|
\lesssim |\ln h|^{\frac12}\epsilon^{-\frac12}|a(\eta^{n},\bu_h^{n},\bu_h^{n})|^{\frac12}
\lesssim |\ln h|^{\frac12}\epsilon^{-\frac12}|\Delta t|^{-\frac12}.
\end{split}
\]
and
\[
\begin{split}
\|\bu^{n}_h\cdot\nablaG c_h^{n+1} \|&\le \|\overline{\bu}_h^{n}\|_{L^\infty(\Gamma)} \|\nablaG c_h^{n+1} \|\\
&\lesssim \epsilon^{-\frac12}\|\overline{\bu}_h^{n}\|_{L^\infty(\Gamma)} \lesssim
 \epsilon^{-\frac12}|\ln h|^{\frac12}\|\overline{\bu}_h^{n}\|_{H^1(\Gamma)}\\
  &\lesssim
 \epsilon^{-\frac12}|\ln h|^{\frac12}|a(\eta^{n},\bu_h^{n},\bu_h^{n})|^{\frac12}
 \lesssim |\ln h|^{\frac12}\epsilon^{-\frac12}|\Delta t|^{-\frac12}.
\end{split}
\]
Using these estimates back into \eqref{aux1003} and with
the assumption  $h \le c |\ln h|^{-\frac32} \epsilon |\Delta t|^{-\frac12}$ for sufficiently small $c$,
we arrive at the estimate
\begin{equation}\label{aux1030}
|I(e_h)|
\le \frac12 a(\eta^{n+1};\bu_h^{n+1},\bu_h^{n+1}).
\end{equation}
Finally, we substitute \eqref{aux649} and \eqref{aux1030}  in \eqref{aux568} to obtain
\begin{multline*}
\frac{1}{2\Delta t}\left(\|(\rho^{n+1})^{\frac12}\overline{\bu}_h^{n+1}\|^2  + \sigma_\gamma a(c^{n+1}_h,c^{n+1}_h) +\frac{2 \sigma_\gamma}{\epsilon}\int_\Gamma f_0(c^{n+1}_h) ds \right) + \frac12 a(\eta^{n+1};\bu^{n+1}_h,\bu^{n+1}_h)\\ + \frac12 \sigma_\gamma a_\mu(\mu^{n+1}_h,\mu^{n+1}_h)+ s_h(p_h^{n+1},p_h^{n+1})
 \le    \frac{1}{2\Delta t}\left( \|(\rho^{n})^{\frac12}\overline{\bu}_h^n\|^2
  + \sigma_\gamma a(c^{n}_h,c^{n}_h)+\frac{2 \sigma_\gamma}{\epsilon}\int_\Gamma f_0(c^n_h) ds\right). 
\end{multline*}
This completes the induction step.
\end{proof}

\begin{remark}\label{Rem2}\rm From the proof we see that the  assumption  $h \le c |\ln h|^{-1} \min\{\Delta t, |\ln h|^{-\frac12} \epsilon |\Delta t|^{\frac12}\}$ results from the fact that at  the discrete level we cannot test the transport equation for the order parameter with $v_h=|\bu_h^{n+1}|^2$, and so we have to project  $|\bu_h^{n+1}|^2$ (or $|\overline{\bu}_h^{n+1}|^2$ for surfaces) in the finite element space $V^k_h$ and handle the resulting inconsistency. If the finite element velocity space is such that $|\bv_h|^2\in V_h^k$ for $\bv_h\in\bV_h$, then the upper bound on $h$ is not needed in the analysis (in the surface case we still would need to handle $|\bu_h^{n+1}|^2-|\overline{\bu}_h^{n+1}|^2$, but this is possible due to the control over $\|\bn\cdot\bu_h^{n+1}\|^2$  that we have  thanks to penalty term in the TraceFEM formulation). An example, when  $|\bv_h|^2\in V_h^k$ holds, is $\bf P_1$--$P_1$ stabilized velocity--pressure element combined with $P_2$ element for the order parameter and chemical potential.
\end{remark}



\section{Numerical results}\label{sec:num_res}

After checking the convergence orders of the method described in
Sec.~\ref{sec:method}, we present a series of numerical tests to study well-known
two-phase fluid flows (the Kelvin--Helmholtz and Rayleigh--Taylor instabilities) on surfaces.
Thanks to our method, we can investigate the effect of line tension on such instabilities.
In addition, for the Rayleigh--Taylor instability we assess the effect of viscosity
and surface shape.

For all the tests, we choose $\bf P_2$--$P_1$ finite elements for fluid velocity and
pressure and $P_1$--$P_1$ finite elements for surface fraction and chemical potential.

\subsection{Convergence test}

We proceed with checking the spatial accuracy of the finite element
method described in Sec.~\ref{sec:method} with a benchmark test.
The aim is to validate our implementation of the method.
For this purpose, we consider the two-phase fluid system on the unit sphere
centered at the origin. The surface is characterized as the zero level set of function $\phi(\bx) = \|\bx\|_2 -1$
and is embedded in an outer cubic domain $\Omega=[-5/3,5/3]^3$.
We choose Van der Waals ``tanh'' exact solution for the surface fraction and solenoidal Killing vector field for velocity:
\[
 c^*(t, \bx) = \frac{1}{2} \left(1 + \tanh\frac{z \cos(\pi t) - y \sin(\pi t)} {2 \sqrt{2} \epsilon} \right), \quad \bu^*(t, \bx) = \pi\,(0, -z ,y)^T.
\]
Nonzero CH equation forcing term is computed from~\eqref{gracke-3}. We set $\epsilon = 0.05$. Fig.~\ref{fig:rotation} (leftmost panel) displays $c(0, \bx)$.
For this test, we consider fluids with matching densities and viscosities:
$\rho_1 = \rho_2 = 1$ and $\eta_1 =\eta_2 = 1$. In addition, we set
$M=0.05$ and $\sigma_\gamma = 0$. The time interval of interest is $t\in [0,1]$, during which
the initial configuration $c_0$ is rotated by $180^\circ$. See Fig.~\ref{fig:rotation}.
Notice that by setting $\sigma_\gamma = 0$
the NSCH system one-way coupled: phase-separation is affected by the fluid flow,
but not vice versa.

\begin{figure}[h]
\centering
\begin{overpic}[width=0.15\textwidth, grid=false]{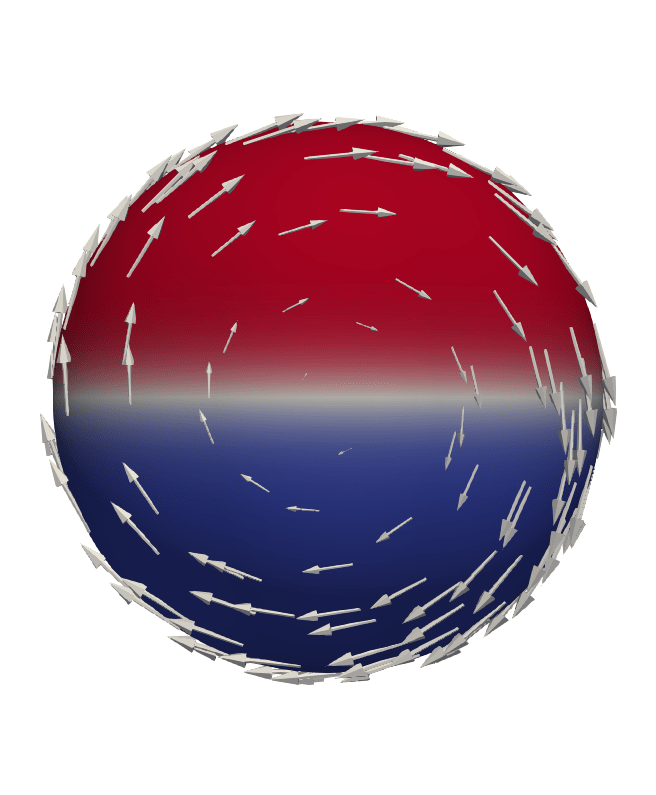}
\put(30,90){$t = 0$}
\end{overpic}
\begin{overpic}[width=0.15\textwidth, grid=false]{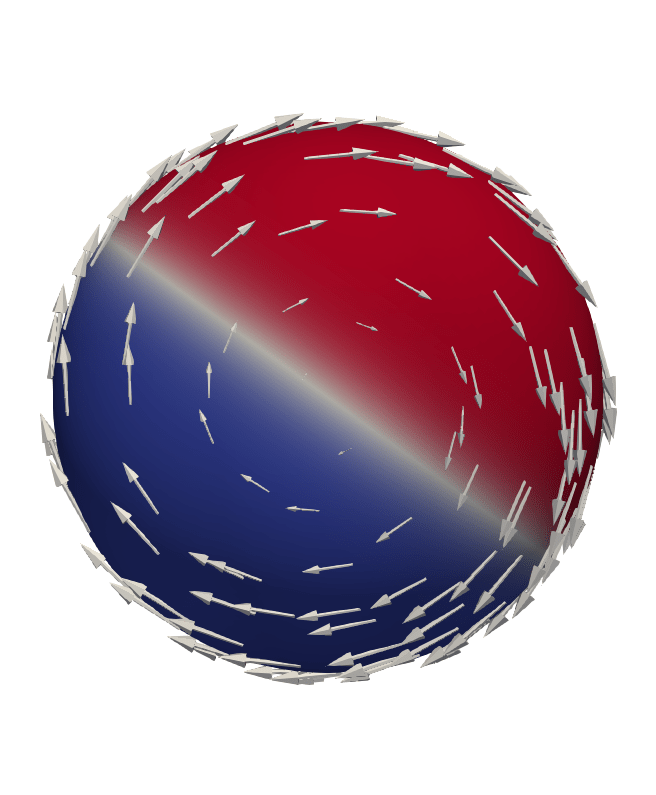}
\put(25,90){$t = 0.2$}
\end{overpic}
\begin{overpic}[width=0.15\textwidth, grid=false]{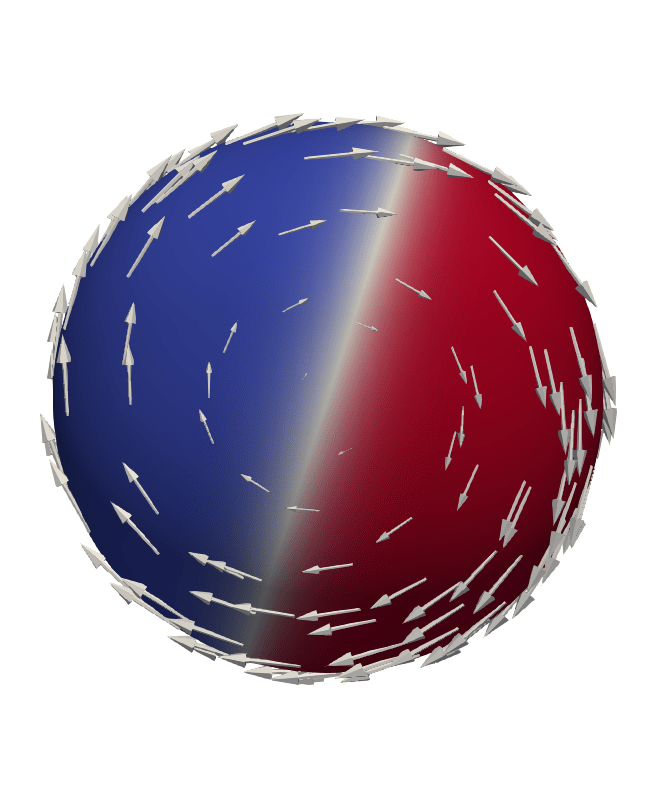}
\put(25,90){$t = 0.6$}
\end{overpic}
\begin{overpic}[width=0.15\textwidth, grid=false]{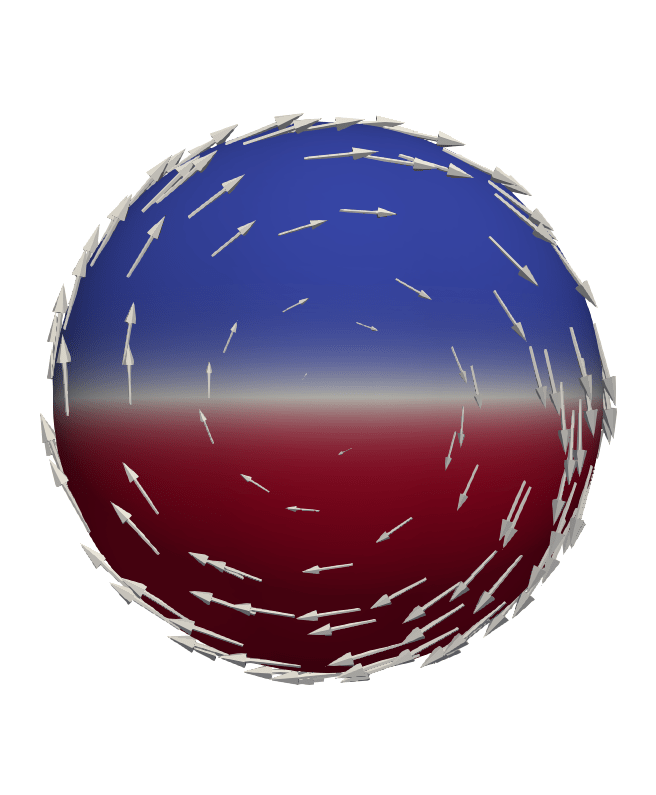}
\put(30,90){$t = 1$}
\end{overpic}
	\caption{Evolution of the surface fraction $c$ over time computed with mesh $\ell = 5$.}
	\label{fig:rotation}
\end{figure}

%
%

The initial triangulation $\T_{h_\ell}$ of $\Omega$ consists of eight sub-cubes,
where each of the sub-cubes is further subdivided into six tetrahedra.
Further, the mesh is refined towards the surface, and $\ell\in\Bbb{N}$ denotes the level of refinement, with the associated mesh size $h_\ell= \frac{10/3}{2^{\ell+1}}$. 
For the purpose of numerical integration, we apply several ``virtual'' levels of refinements for the tetrahedra cut by the mesh and integrate our bilinear forms over a piecewise planar approximation of $\Gamma$ on this virtual grid. This allowed us to apply a standard quadrature rule and reduce the geometric error in our convergence test.   The time step was refined together with the mesh size according to $\Delta t = 1/(25 \cdot 2^{\ell - 2})$. For this test, we used
BDF2 for the time discretization of the Cahn--Hilliard problem at Step 1, instead
of BDF1 as reported in \eqref{eq:CH_FE1}. This is why the time step is refined linearly.

Table \ref{tab:rotation} reports the $H_1$ and $L_2$ errors for the velocity and the $L_2$ error
for the order parameter at the end of the time interval, i.e.~$t = 1$, for levels $\ell = 3, 4, 5$. For each
mesh, Table \ref{tab:rotation} gives the number of sublevels (virtual levels) used for more accurate numerical integration.
We observe slightly better than expected convergence rates, which might be the effect of the interplay between interpolation and geometric error reduction.

\begin{table}[h]
	\centering
	\begin{tabular}{|c|c|c|c|c|c|c|c|}
		\hline
	$\ell$	& sublevels  & $||\bu^* - \bu_h||_{H^1(\Gamma)}$  & rate  &  $||\bu^* - \bu_h||_{L^2(\Gamma)}$  & rate  & $||c^* - c_h||_{L^2(\Gamma)}$  & rate   \\ \hline
	3	& 1  & 0.081485 &  &0.010026  &  &  0.311232 &  \\ \hline
	4	&  2 &  0.016800 & 2.42 & 0.000619 & 8.20 & 0.081597 & 1.91 \\ \hline
	5	&   4 & 0.003905  & 2.15 & 0.000046  & 6.73 &  0.015086& 2.70 \\ \hline
	\end{tabular}
	\caption{$H_1$ and $L_2$ errors for the velocity and the $L_2$ error
for surface fraction at $t = 1$ for levels $\ell = 3, 4, 5$ and rate of convergence.}\label{tab:rotation}
\end{table}

\subsection{The Kelvin--Helmholtz instability}

While the Kelvin--Helmholtz (KH) instability is a classical example of two-phase fluid flow in planar
or volumetric domains, the number of numerical studies on curved surfaces is limited.
The KH instability arises when there is a difference in velocity at the interface between the
two fluids and a perturbation is added to the interface. This perturbation eventually makes the interface curl up
and generates a vortex strip.
Here, we will simulate the KH instability on a sphere and investigate the effect
of varying line tension.

To design this experiment, we follow what done in \cite{Lederer2020,Jankuhn2020,olshanskii2020recycling}.
The initial velocity field is given by the counter-rotating upper and
lower hemispheres with speed approximately equal 1 closer to equator and
vanishing at the poles. The velocity field has a sharp transition layer along the equator, where
the perturbation is added. See, e.g., \cite{olshanskii2020recycling} for details on the perturbation.
The initial surface fraction is given by
\[
c_0 = \frac{1}{2} \left(1 + \tanh\frac{z} {2 \sqrt{2} \epsilon}\right),
\]
where $\epsilon=0.01$. Also for this test, we consider fluids with matching densities and viscosities:
$\rho_1 = \rho_2 = 1$ and $\eta_1 =\eta_2 = 10^{-5}$. In addition, we set
$M=0.01$. We consider time interval $[0,10]$.

We select mesh level $\ell = 6$ (see mesh description for the previous test).
We choose $\Delta t = 1/640$. Fig.~\ref{khch} and \ref{khwh} show the evolution of surface fraction and vorticity
for three different values of line tension: $\sigma_\gamma = 0, 0.01, 0.1$. The evolution of
both quantities does not vary significantly when going from $\sigma_\gamma = 0$ to $\sigma_\gamma = 0.01$,
although some differences can be noticed from $t =4.5$ on. Changing to $\sigma_\gamma = 0.1$ produces
more evident differences, starting already from $t =1.375$. When $\sigma_\gamma \neq 0$, the NS part of the
CHNS system is two-way coupled to the CH part. So, the differences are significant both for surface fraction and vorticity.

\vskip .3cm

\begin{figure}[htb]
\centering
\hskip .7cm
\begin{overpic}[width=0.11\textwidth,grid=false,tics=10]{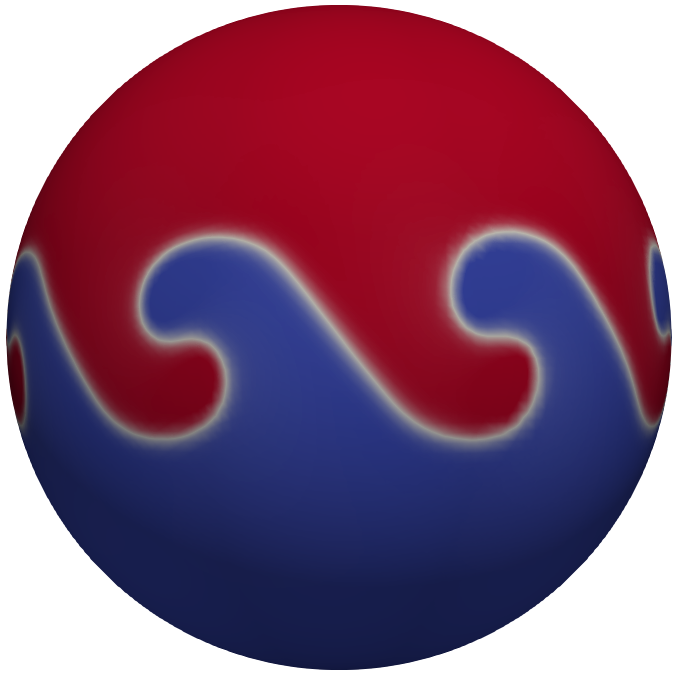}
\put(-95,45){$\sigma_\gamma = 0$}
\put(30,110){$t = 1$}
\end{overpic}~
\begin{overpic}[width=0.11\textwidth,grid=false,tics=10]{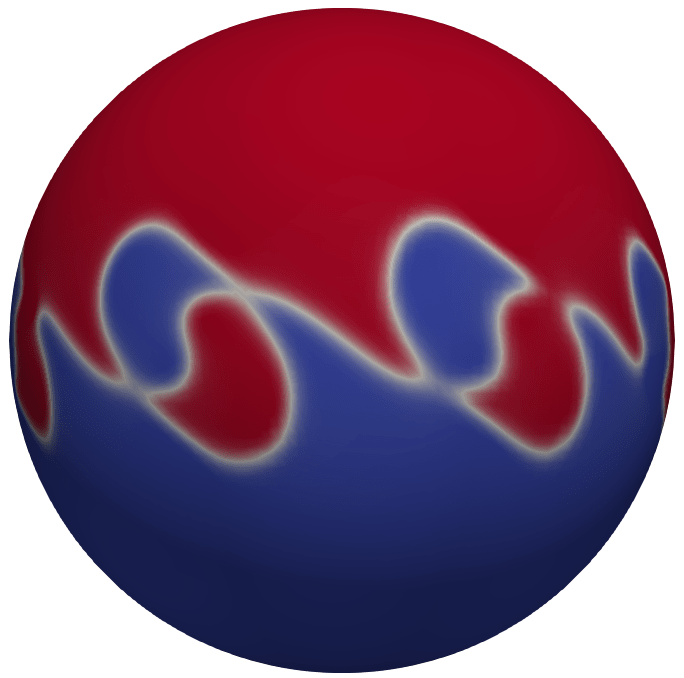}
\put(0,110){$t = 1.375$}
\end{overpic}~
\begin{overpic}[width=0.11\textwidth,grid=false,tics=10]{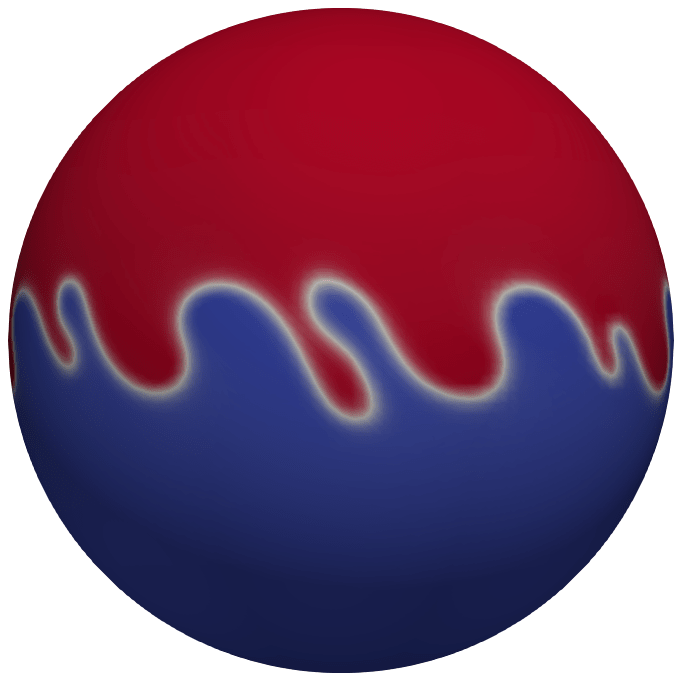}
\put(30,110){$t = 2$}
\end{overpic}~
\begin{overpic}[width=0.11\textwidth,grid=false,tics=10]{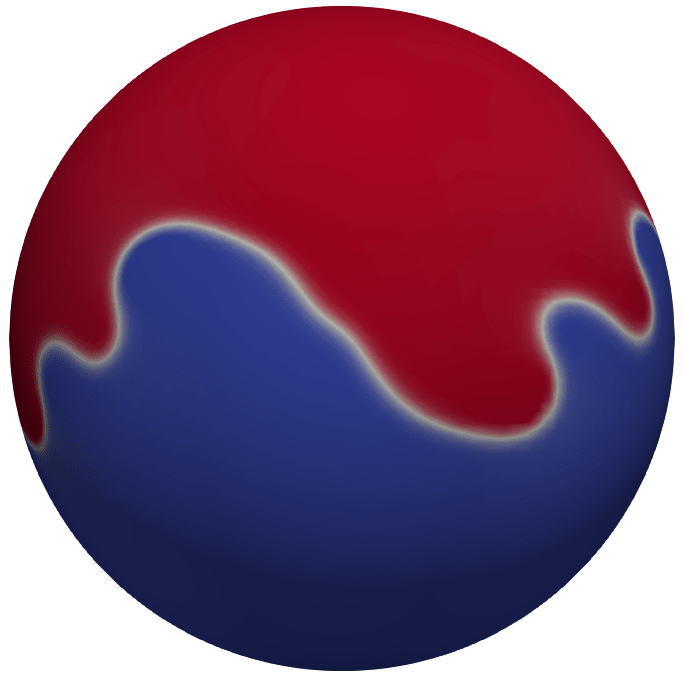}
\put(15,110){$t = 4.5$}
\end{overpic}~
\begin{overpic}[width=0.11\textwidth,grid=false,tics=10]{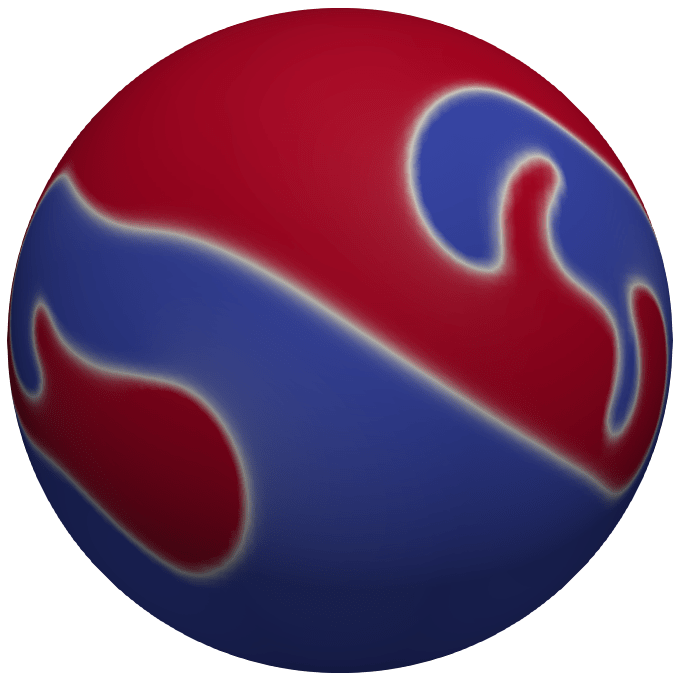}
\put(15,110){$t = 5.5$}
\end{overpic}~
\begin{overpic}[width=0.11\textwidth,grid=false,tics=10]{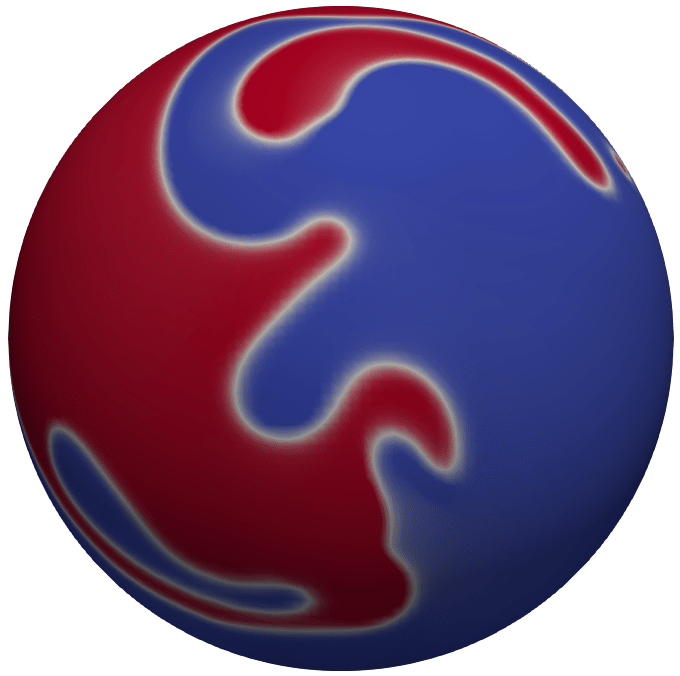}
\put(15,110){$t = 8.5$}
\end{overpic}~
\begin{overpic}[width=0.11\textwidth,grid=false,tics=10]{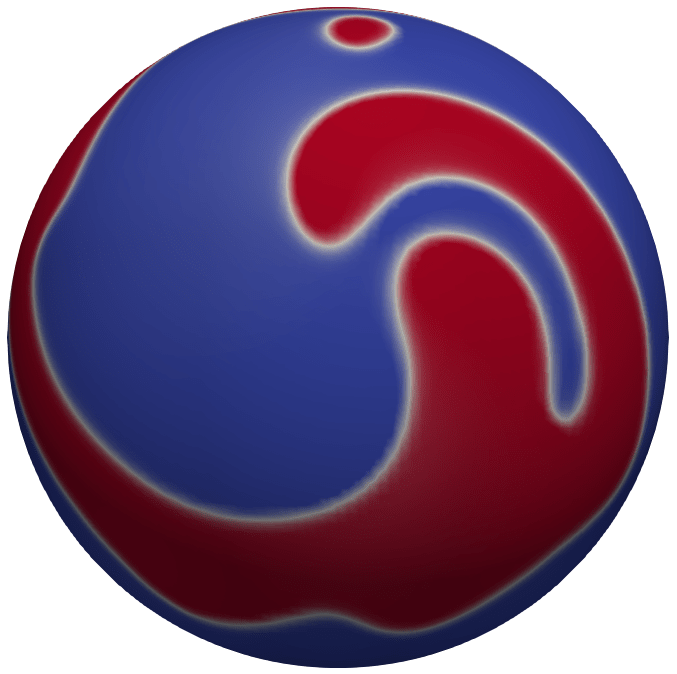}
\put(15,110){$t = 10$}
\end{overpic} \\
\hskip .7cm
\begin{overpic}[width=0.11\textwidth,grid=false,tics=10]{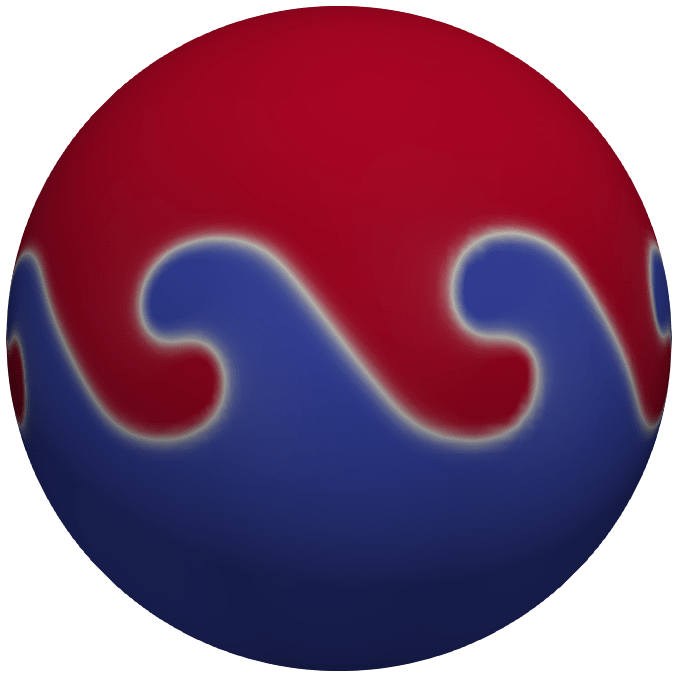}
\put(-95,45){$\sigma_\gamma = 0.01$}
\end{overpic}~
\begin{overpic}[width=0.11\textwidth,grid=false,tics=10]{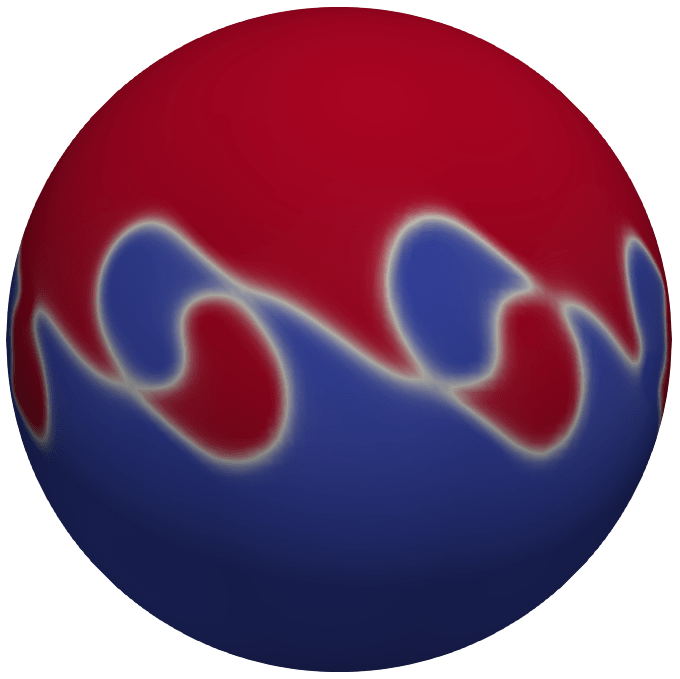}
\end{overpic}~
\begin{overpic}[width=0.11\textwidth,grid=false,tics=10]{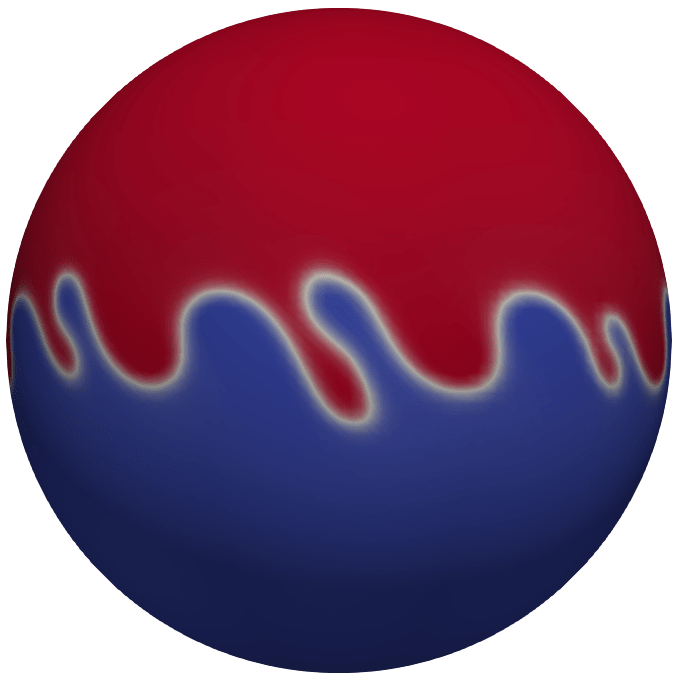}
\end{overpic}~
\begin{overpic}[width=0.11\textwidth,grid=false,tics=10]{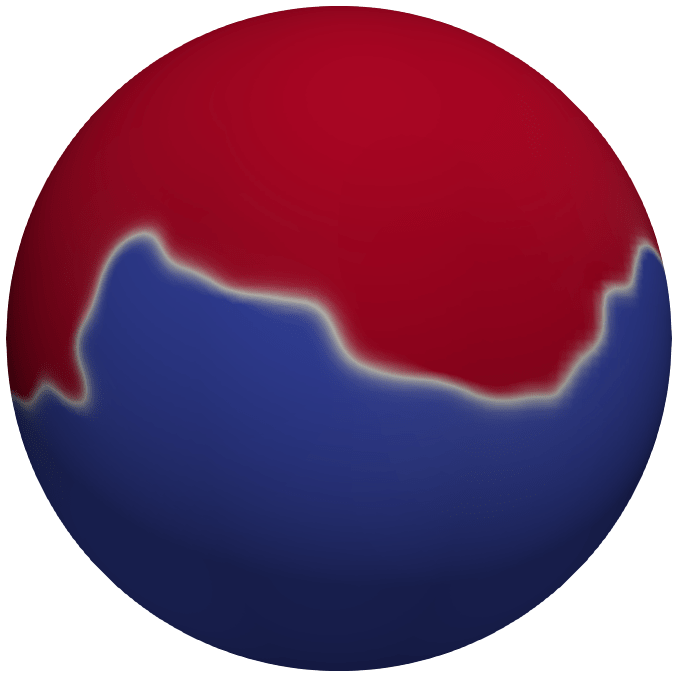}
\end{overpic}~
\begin{overpic}[width=0.11\textwidth,grid=false,tics=10]{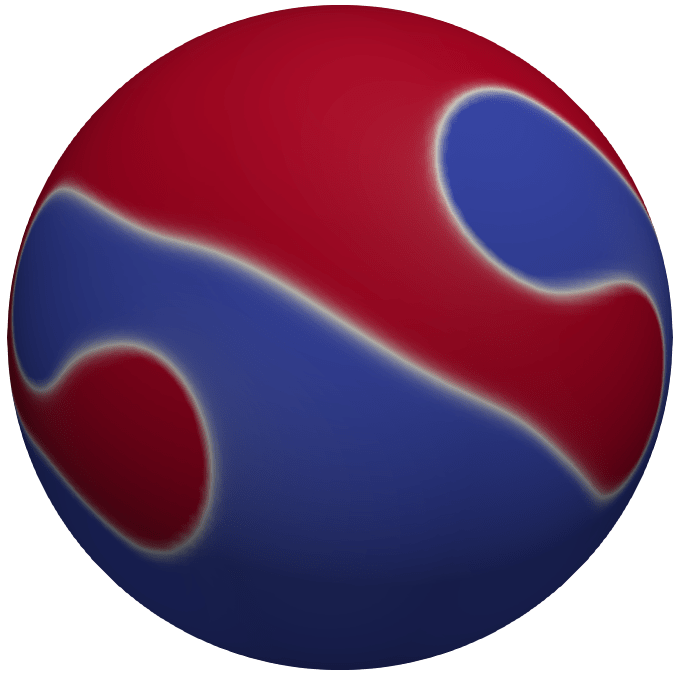}
\end{overpic}~
\begin{overpic}[width=0.11\textwidth,grid=false,tics=10]{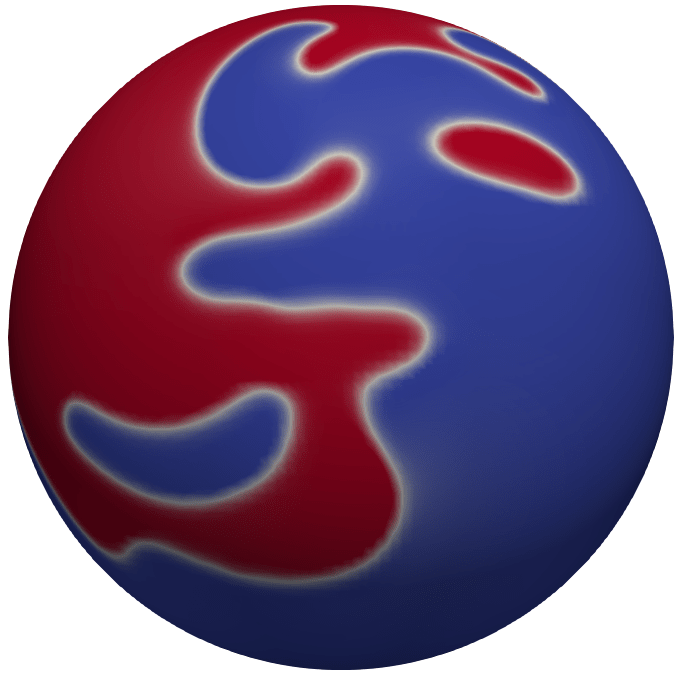}
\end{overpic}~
\begin{overpic}[width=0.11\textwidth,grid=false,tics=10]{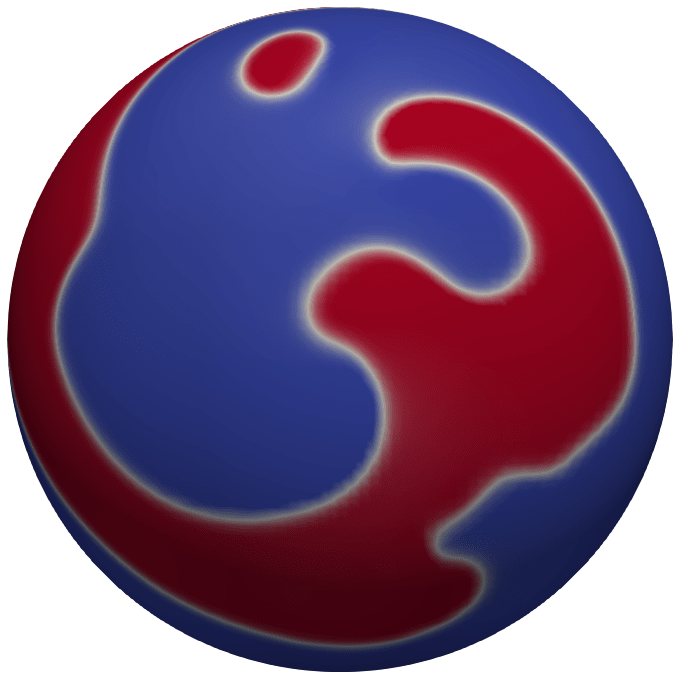}
\end{overpic} \\
\hskip .7cm
\begin{overpic}[width=0.11\textwidth,grid=false,tics=10]{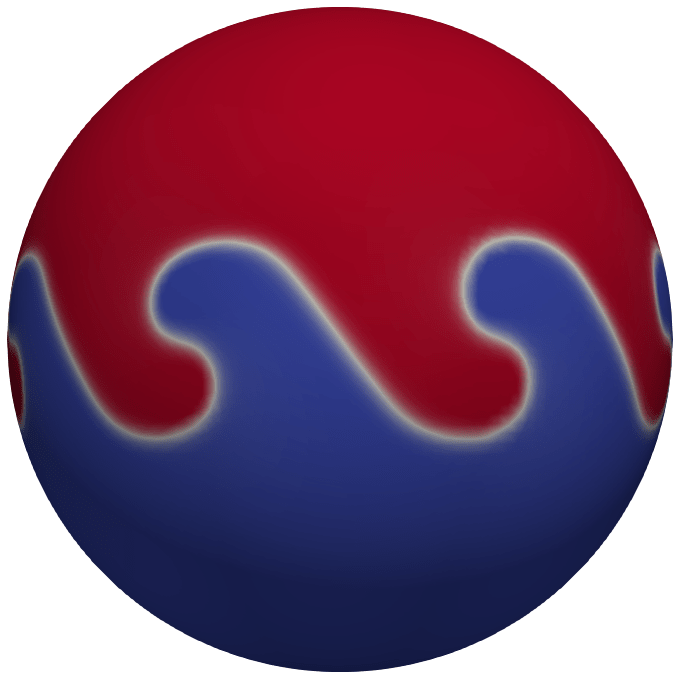}
\put(-95,45){$\sigma_\gamma = 0.1$}
\end{overpic}~
\begin{overpic}[width=0.11\textwidth,grid=false,tics=10]{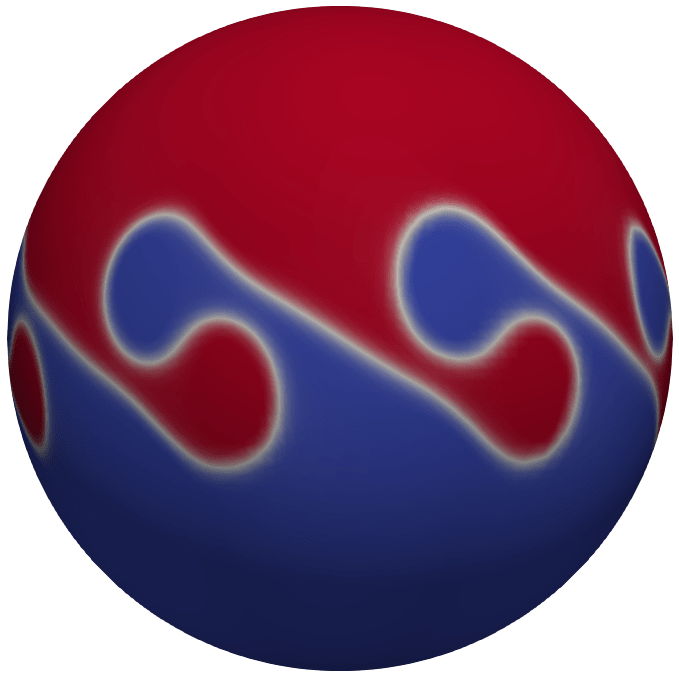}
\end{overpic}~
\begin{overpic}[width=0.11\textwidth,grid=false,tics=10]{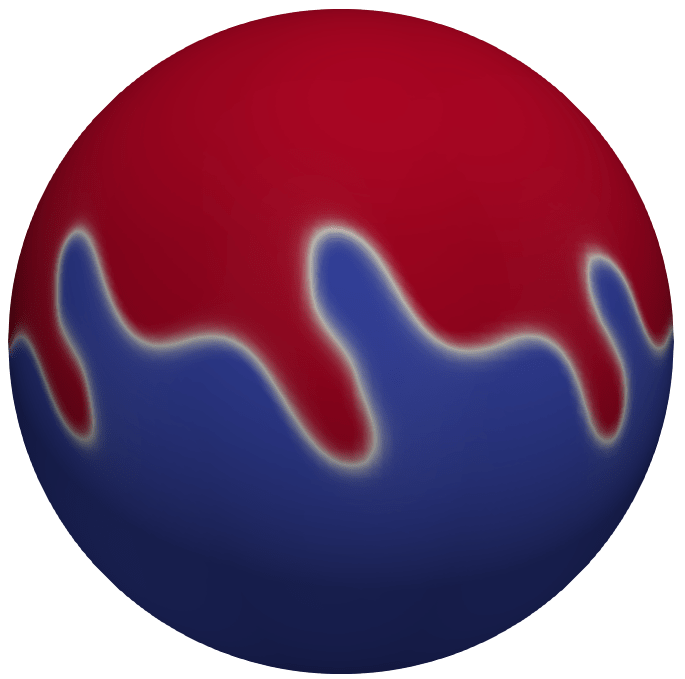}
\end{overpic}~
\begin{overpic}[width=0.11\textwidth,grid=false,tics=10]{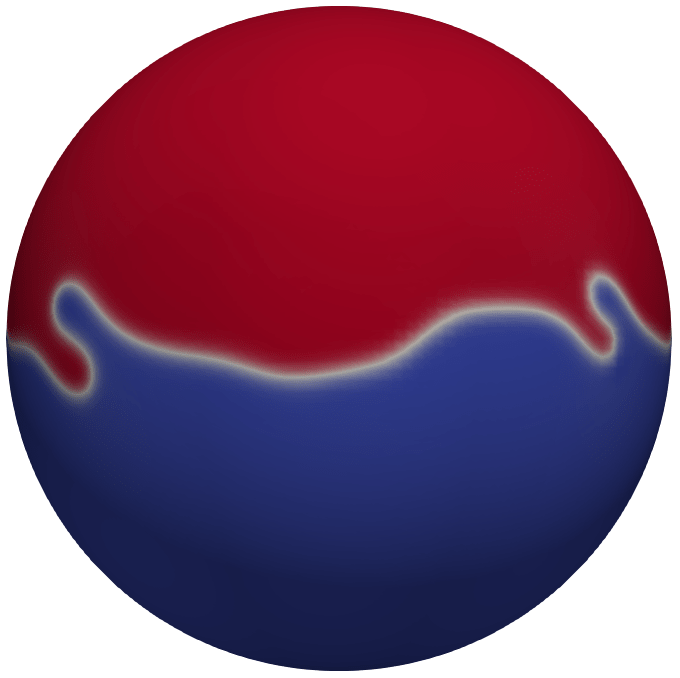}
\end{overpic}~
\begin{overpic}[width=0.11\textwidth,grid=false,tics=10]{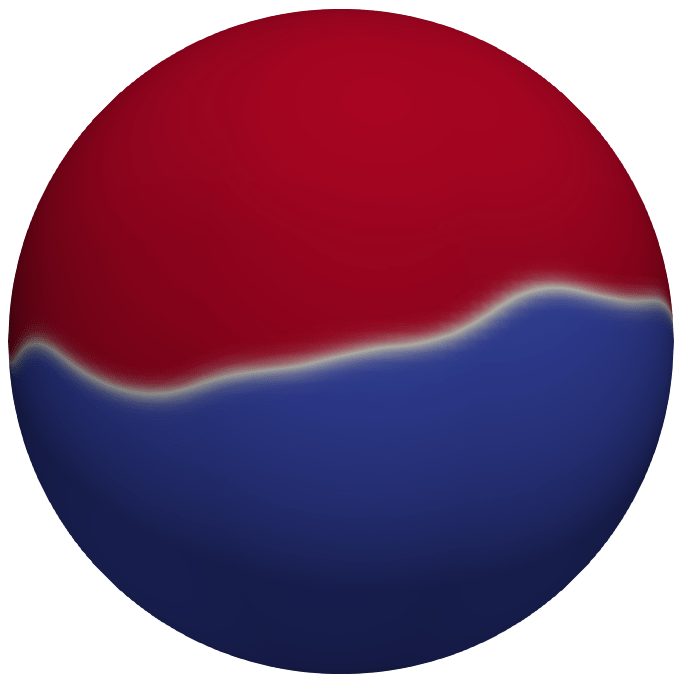}
\end{overpic}~
\begin{overpic}[width=0.11\textwidth,grid=false,tics=10]{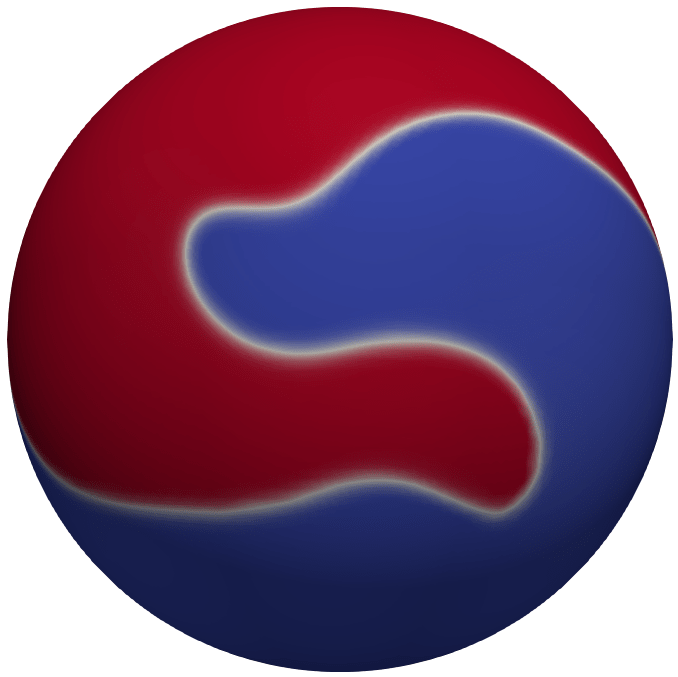}
\end{overpic}~
\begin{overpic}[width=0.11\textwidth,grid=false,tics=10]{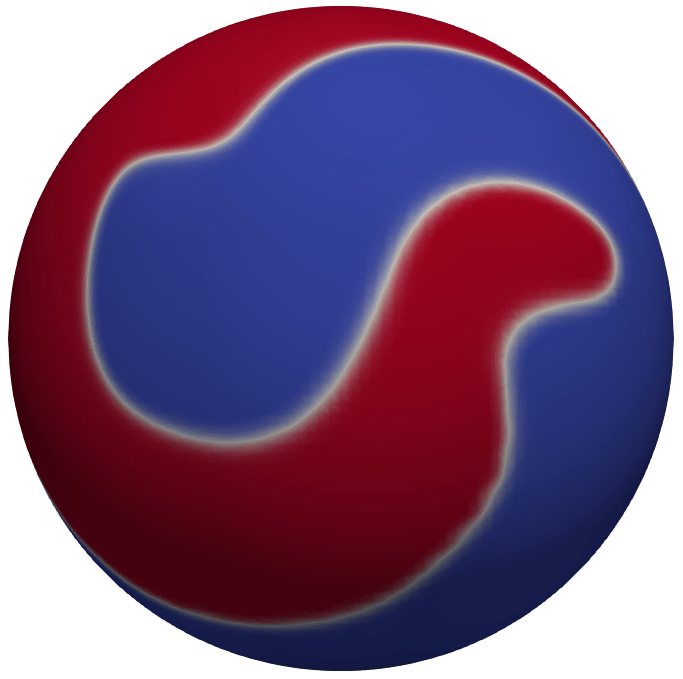}
\end{overpic} \\
\vskip .2cm
\begin{overpic}[width=0.5\textwidth,grid=false,tics=10]{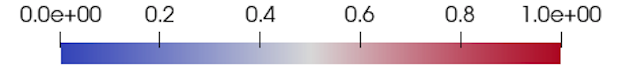}
\end{overpic}
	\caption{KH instability: evolution of order parameter for different values of line tension:
	$\sigma=0$ (top), $\sigma=0.01$ (center), and $\sigma = 0.1$ (bottom).
A full animation can be viewed following the link \href{https://youtu.be/C3_WLO1Wd7Y}{\underline{youtu.be/C$3_{-}$WLO1Wd7Y}}
}
	\label{khch}
\end{figure}


\begin{figure}[htb]
\centering
\hskip .7cm
\begin{overpic}[width=0.11\textwidth,grid=false,tics=10]{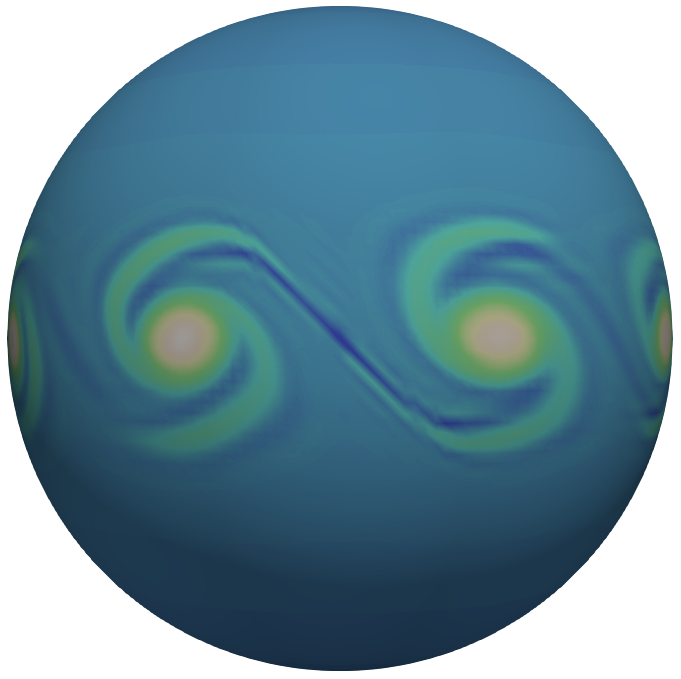}
\put(-95,45){$\sigma_\gamma = 0$}
\put(28,105){$t = 1$}
\end{overpic}~
\begin{overpic}[width=0.11\textwidth,grid=false,tics=10]{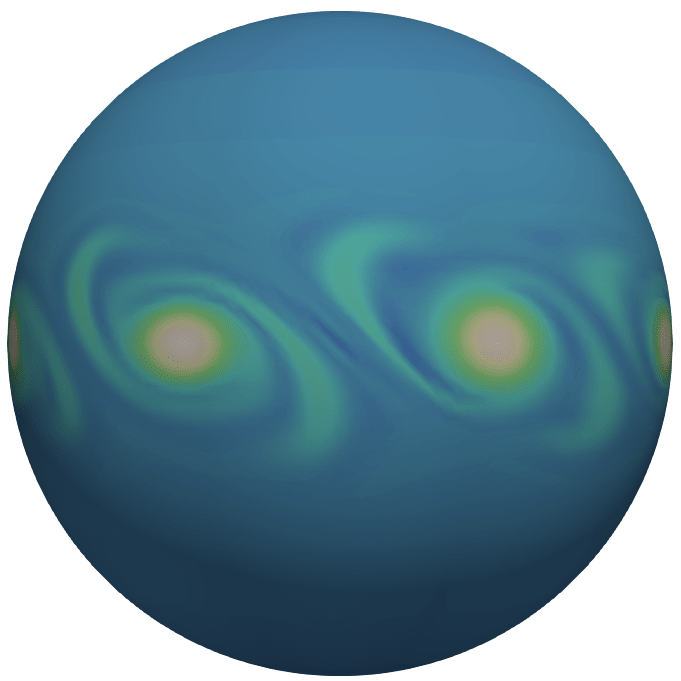}
\put(0,105){$t = 1.375$}
\end{overpic}~
\begin{overpic}[width=0.11\textwidth,grid=false,tics=10]{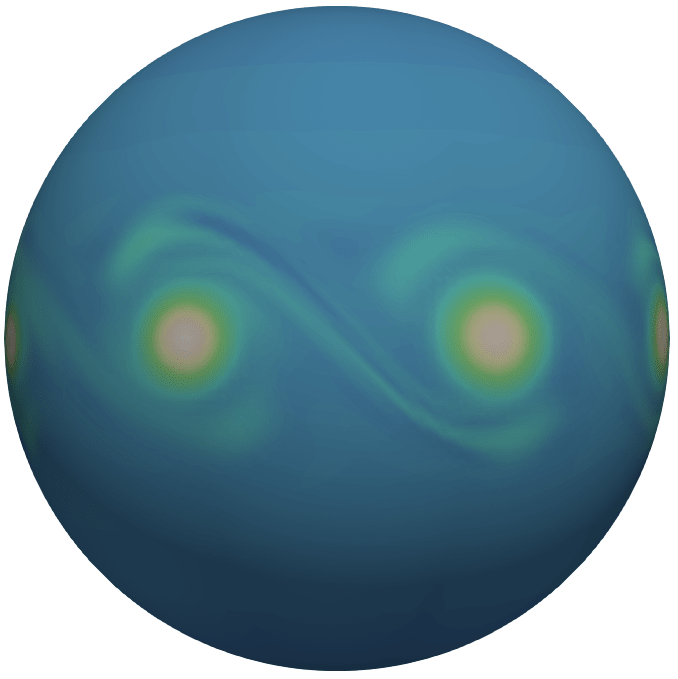}
\put(28,105){$t = 2$}
\end{overpic}~
\begin{overpic}[width=0.11\textwidth,grid=false,tics=10]{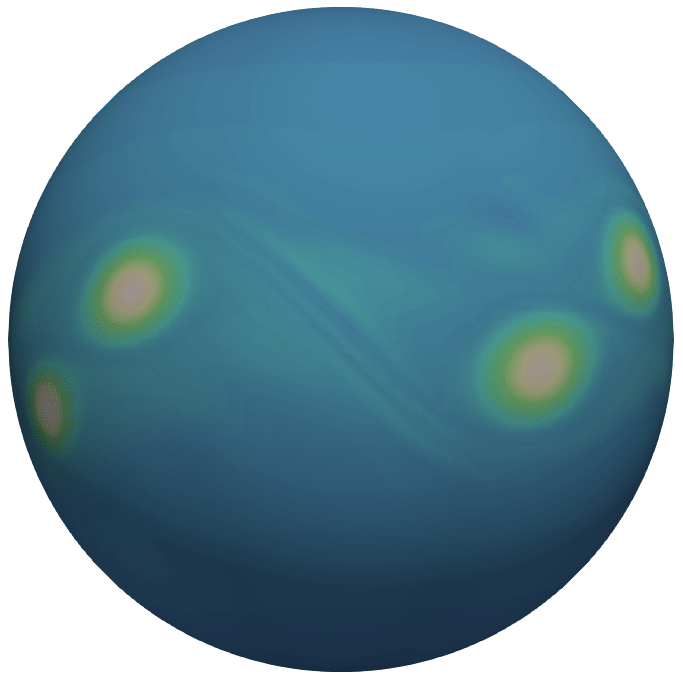}
\put(18,105){$t = 4.5$}
\end{overpic}~
\begin{overpic}[width=0.11\textwidth,grid=false,tics=10]{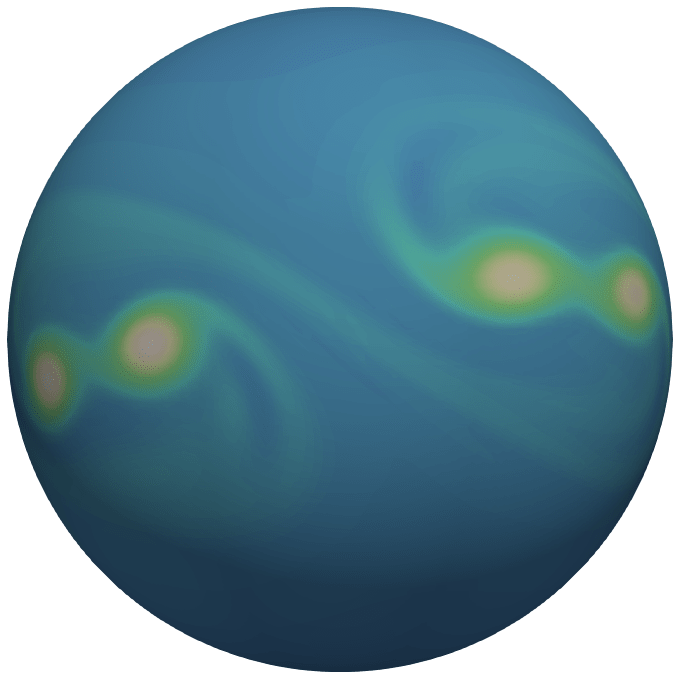}
\put(18,105){$t = 5.5$}
\end{overpic}~
\begin{overpic}[width=0.11\textwidth,grid=false,tics=10]{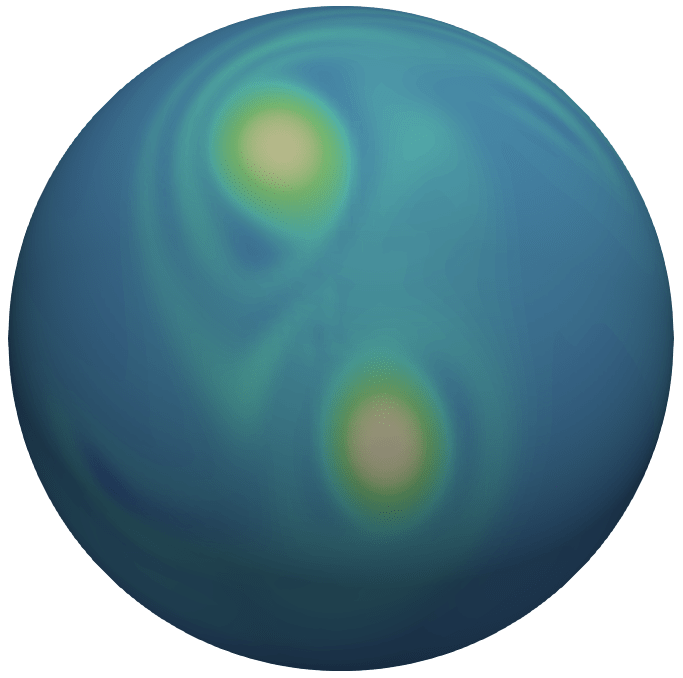}
\put(18,105){$t = 8.5$}
\end{overpic}~
\begin{overpic}[width=0.11\textwidth,grid=false,tics=10]{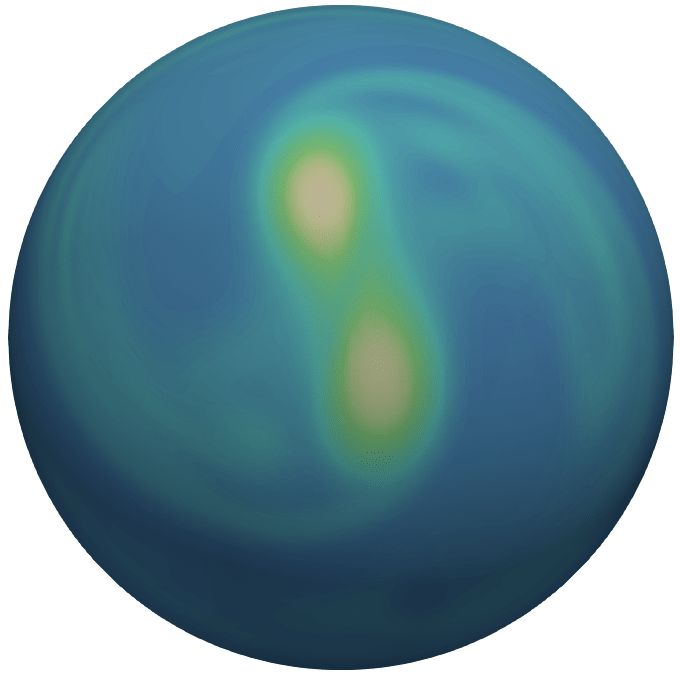}
\put(23,105){$t = 10$}
\end{overpic} \\
\hskip .7cm
\begin{overpic}[width=0.11\textwidth,grid=false,tics=10]{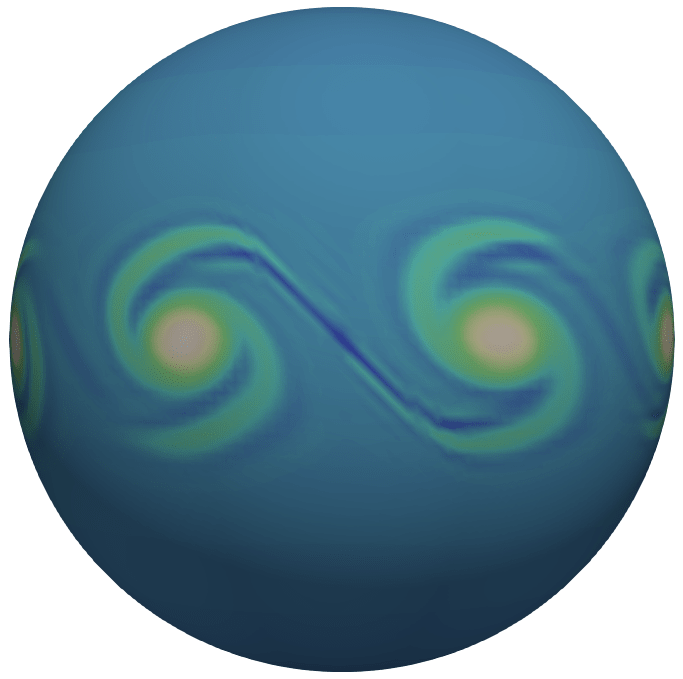}
\put(-95,45){$\sigma_\gamma = 0.01$}
\end{overpic}~
\begin{overpic}[width=0.11\textwidth,grid=false,tics=10]{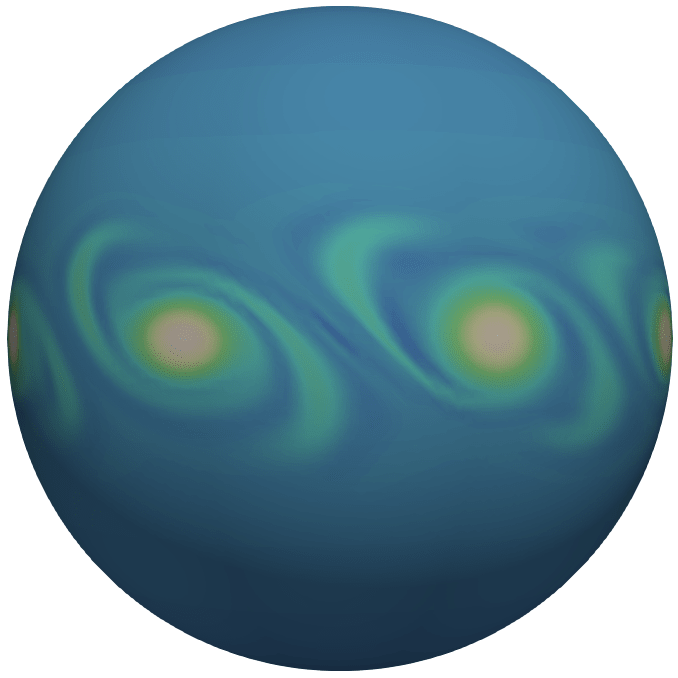}
\end{overpic}~
\begin{overpic}[width=0.11\textwidth,grid=false,tics=10]{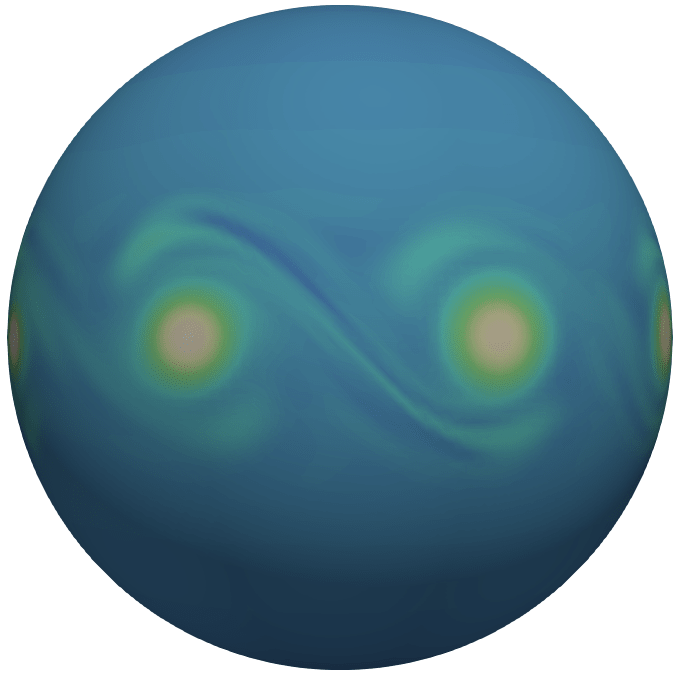}
\end{overpic}~
\begin{overpic}[width=0.11\textwidth,grid=false,tics=10]{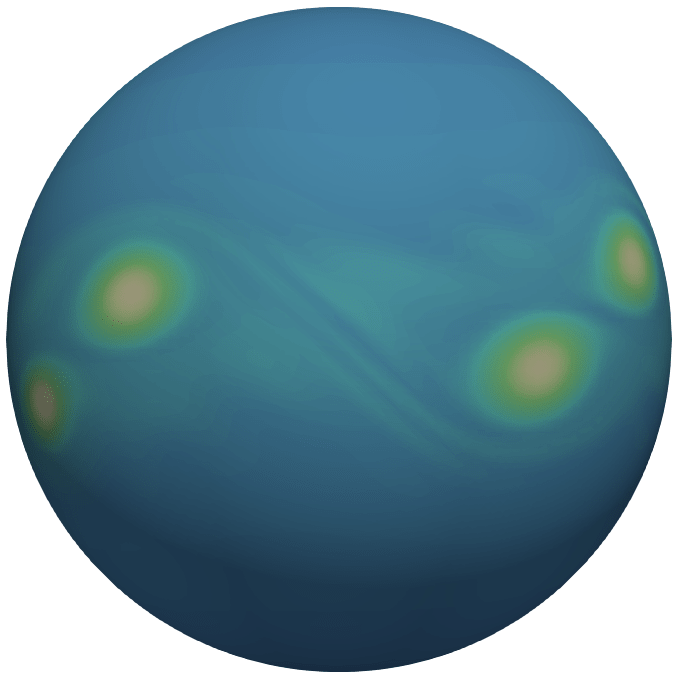}
\end{overpic}~
\begin{overpic}[width=0.11\textwidth,grid=false,tics=10]{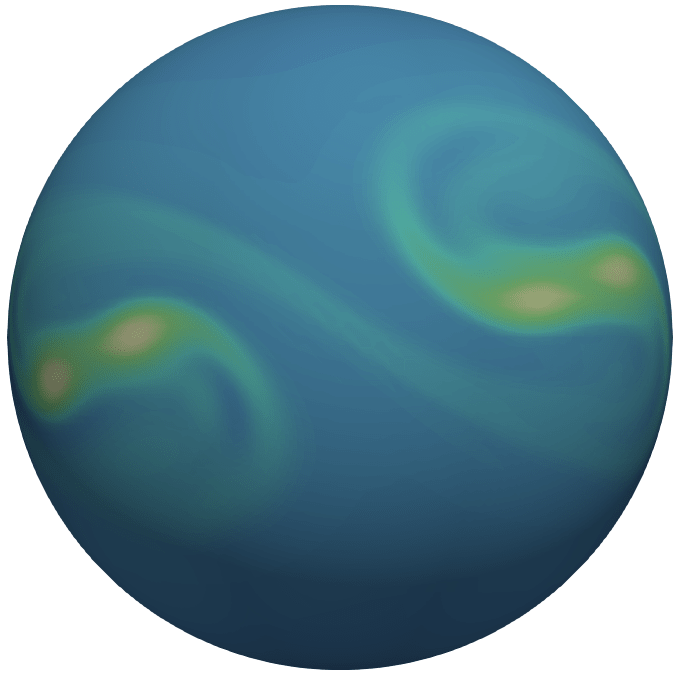}
\end{overpic}~
\begin{overpic}[width=0.11\textwidth,grid=false,tics=10]{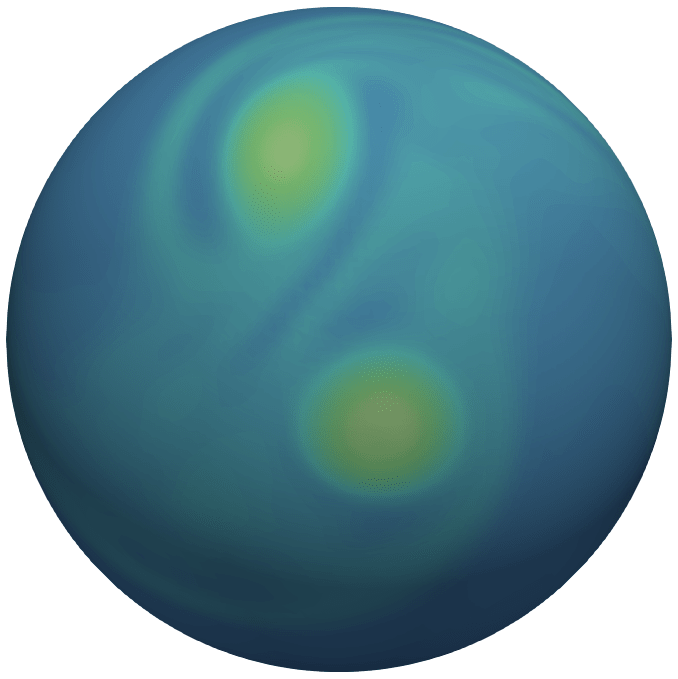}
\end{overpic}~
\begin{overpic}[width=0.11\textwidth,grid=false,tics=10]{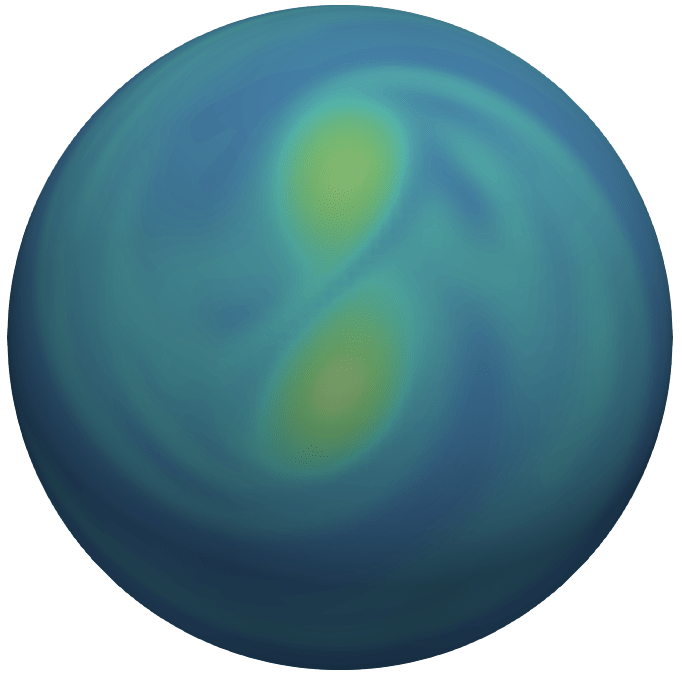}
\end{overpic} \\
\hskip .7cm
\begin{overpic}[width=0.11\textwidth,grid=false,tics=10]{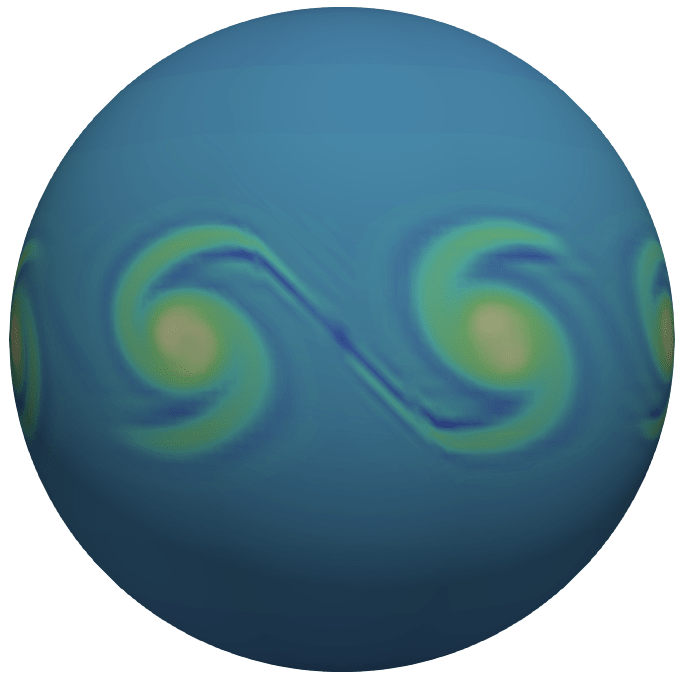}
\put(-95,45){$\sigma_\gamma = 0.1$}
\end{overpic}~
\begin{overpic}[width=0.11\textwidth,grid=false,tics=10]{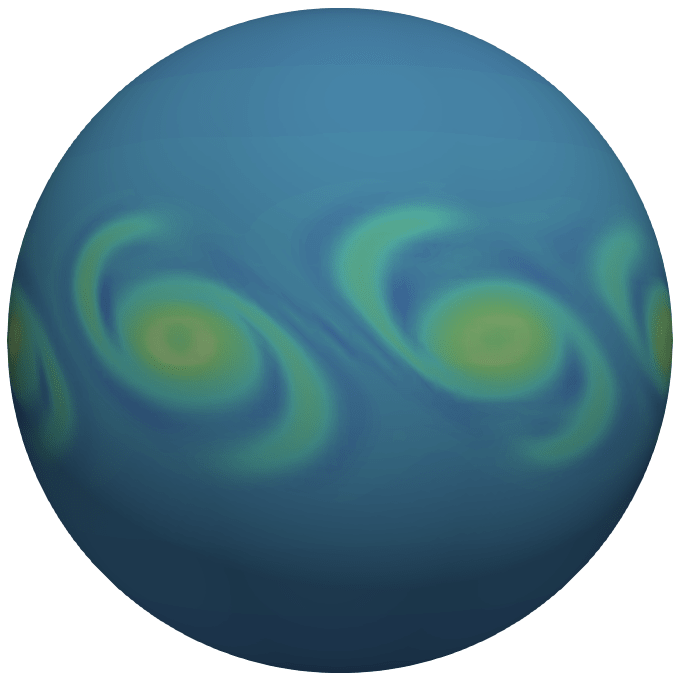}
\end{overpic}~
\begin{overpic}[width=0.11\textwidth,grid=false,tics=10]{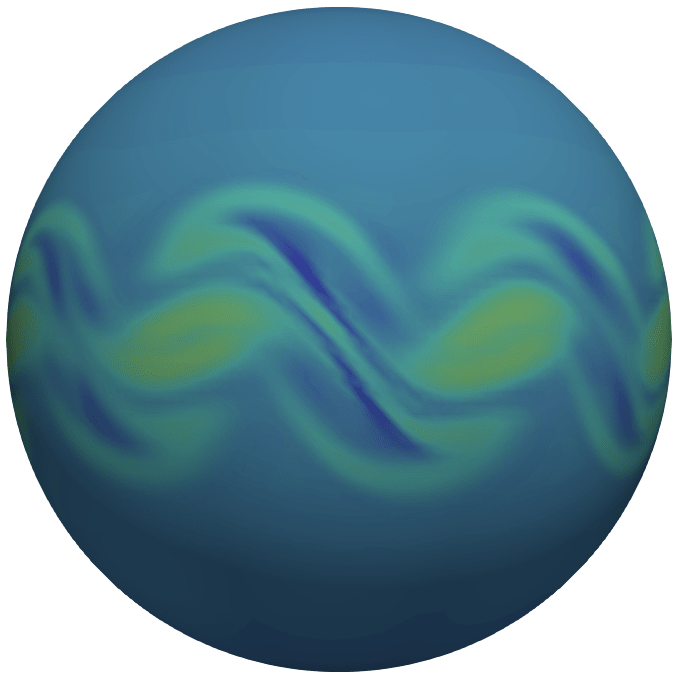}
\end{overpic}~
\begin{overpic}[width=0.11\textwidth,grid=false,tics=10]{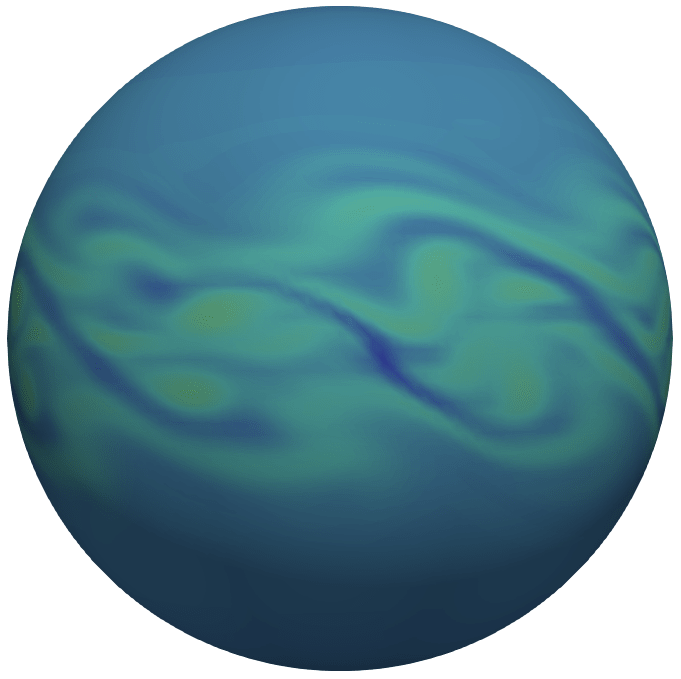}
\end{overpic}~
\begin{overpic}[width=0.11\textwidth,grid=false,tics=10]{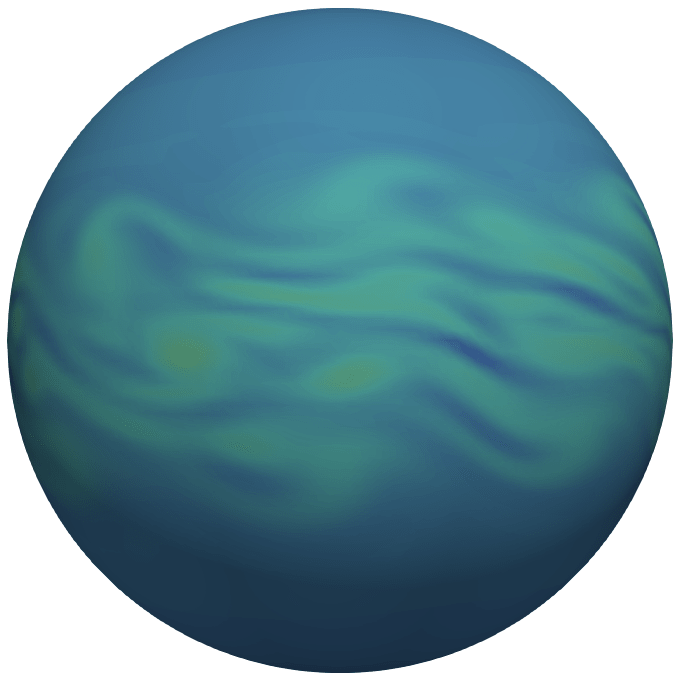}
\end{overpic}~
\begin{overpic}[width=0.11\textwidth,grid=false,tics=10]{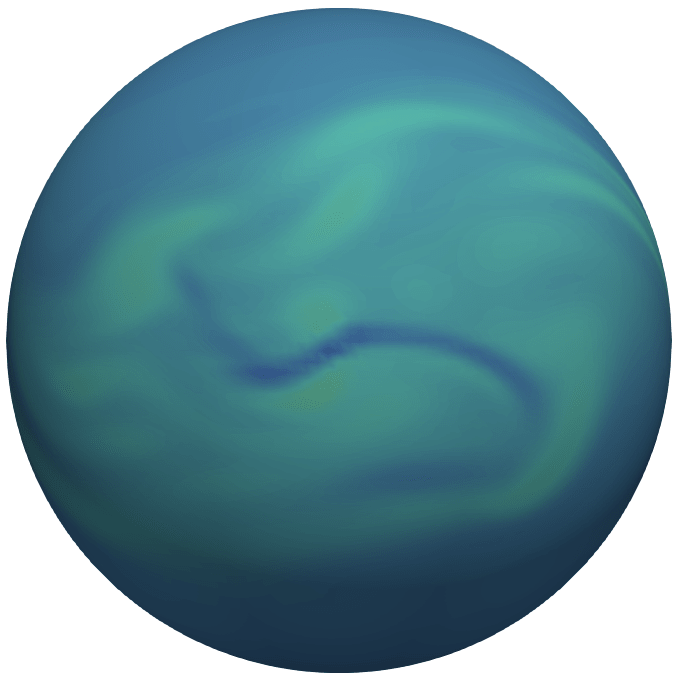}
\end{overpic}~
\begin{overpic}[width=0.11\textwidth,grid=false,tics=10]{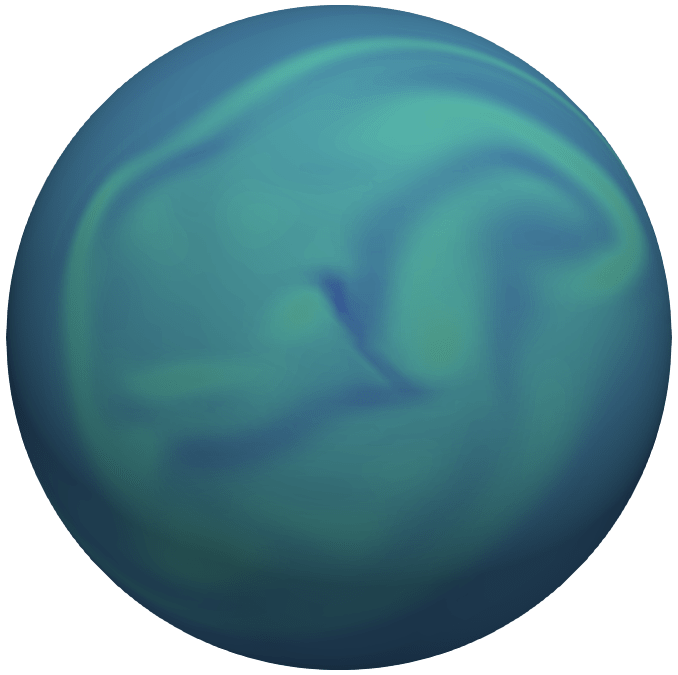}
\end{overpic} \\
\vskip .2cm
\begin{overpic}[width=0.5\textwidth,grid=false,tics=10]{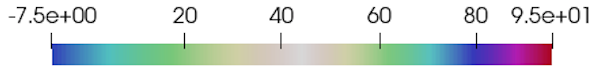}
\end{overpic}
	\caption{KH instability: evolution of the vorticity for different values of line tension:
	$\sigma_\gamma=0$ (top), $\sigma_\gamma=0.01$ (center), and $\sigma_\gamma = 0.1$ (bottom).
A full animation can be viewed following the link \href{https://youtu.be/FdMznBuMJPE}{\underline{youtu.be/FdMznBuMJPE}}
}
	\label{khwh}
\end{figure}

\subsection{The Rayleigh--Taylor instability}

The Rayleigh--Taylor (RT) instability occurs when a gravity force is acting on
a heavier fluid that lies above a lighter fluid. As the RT instability develops, ``plumes'' of the lighter
fluid flow upwards (with respect to the gravitational field) and ``spikes'' of the heavier fluid
fall downwards. We will simulate the RT instability on a sphere and on a torus with the aim of
investigating the effect of the geometry. In addition, we will vary line tension and fluid
viscosity.

We take two fluids with densities $\rho_2 = 3$, $\rho_1 = 1$ and matching viscosities
$\eta_1 =\eta_2 = \eta$, which will be specified for each test.
The initial surface fraction is given by
\[
c_0 =  \frac{1}{2} \left(1 + \tanh\frac{z + z_{rand}} {2 \sqrt{2} \epsilon}\right) ,
\]
where $\epsilon = 0.025$ and $z_{rand}$ is a uniformly generated random number from the range $(-0.1\epsilon, 0.1\epsilon)$.
The role of the perturbation generated by $z_{rand}$ is to onset the RT instability. We set  $M = 0.0025$.

Let us start with the sphere. We select mesh level $\ell = 5$
(see mesh description for the convergence test) and set $\Delta t = 0.1$. Fig.~\ref{rt_torus} shows the evolution of the surface fractions and velocity field
for $\eta = 10^{-2}$ and two values of line tension: $\sigma_\gamma=0 $  and $\sigma_\gamma=0.025$. At time $t = 7$, for $\sigma_\gamma=0.025$
we observe the characteristic flow structures of the RT instability. Instead, for $\sigma_\gamma = 0$
such structures have already broken up at $t = 7$. The effect of line tension is also seen at $t = 30$:
for $\sigma_\gamma = 0.025$ we observe that the heavier fluid has already settled at the bottom of the sphere,
while for $\sigma_\gamma = 0$ that has not happened yet. It takes till $t = 55$ to have the heavier fluid
at the bottom in the absence of line tension. After the revolution, the fluid phases  do not achieve steady state quickly but the waves
keep traveling along the equator.

\vskip .3cm
\begin{figure}[htb]
\centering
\hskip 1.2cm
\begin{overpic}[width=0.14\textwidth,grid=false,tics=10]{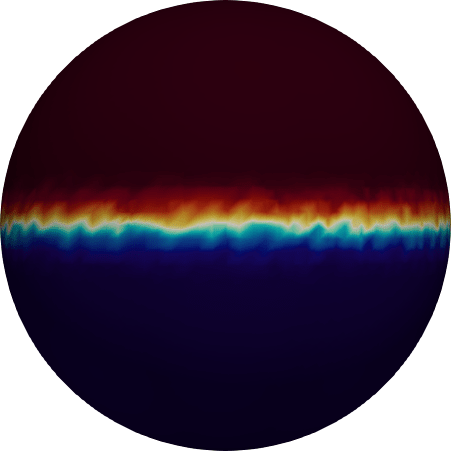}
\put(-85,45){$\sigma_\gamma = 0$}
\put(30,105){$t = 0$}
\end{overpic}~
\begin{overpic}[width=0.14\textwidth,grid=false,tics=10]{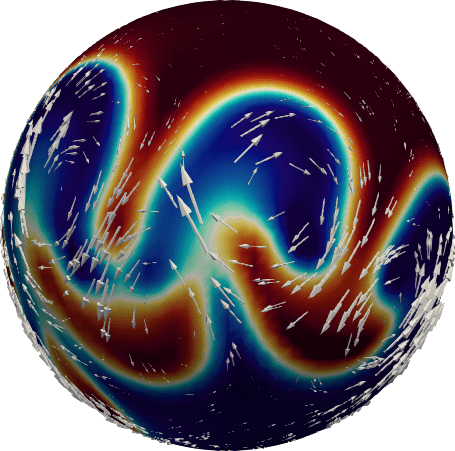}
\put(30,105){$t = 7$}
\end{overpic}~
\begin{overpic}[width=0.14\textwidth,grid=false,tics=10]{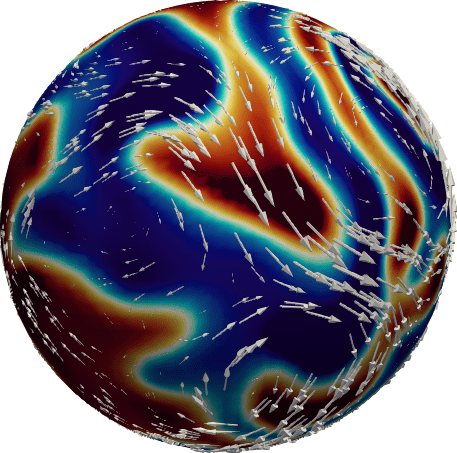}
\put(27,105){$t = 14$}
\end{overpic}~
\begin{overpic}[width=0.14\textwidth,grid=false,tics=10]{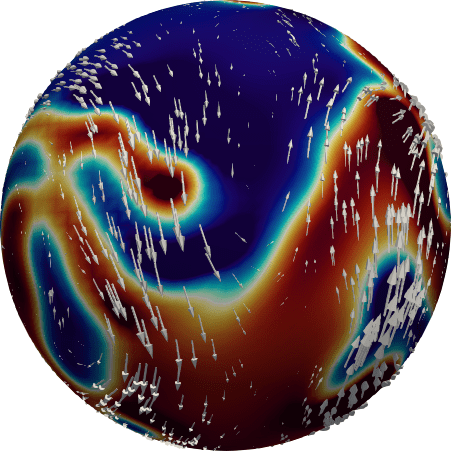}
\put(27,105){$t = 20$}
\end{overpic}~
\begin{overpic}[width=0.14\textwidth,grid=false,tics=10]{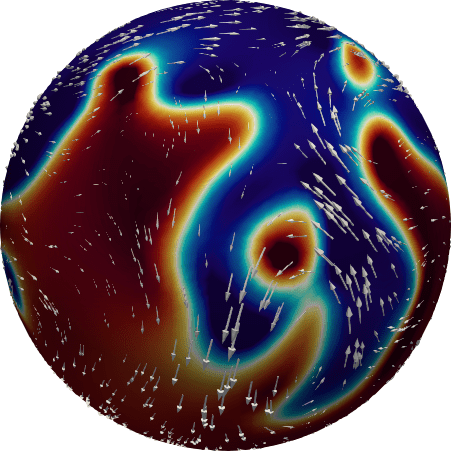}
\put(27,105){$t = 30$}
\end{overpic}~
\begin{overpic}[width=0.14\textwidth,grid=false,tics=10]{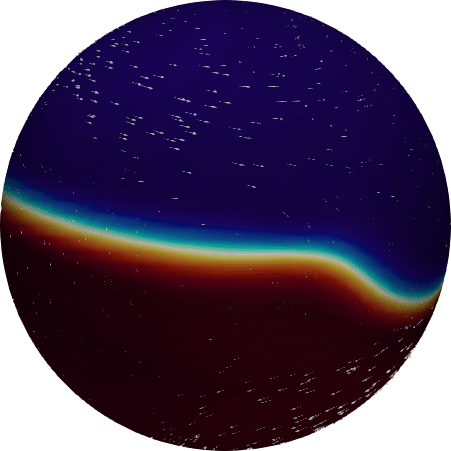}
\put(27,105){$t = 55$}
\end{overpic} \\
\vskip .1cm
\hskip 1.2cm
\begin{overpic}[width=0.14\textwidth,grid=false,tics=10]{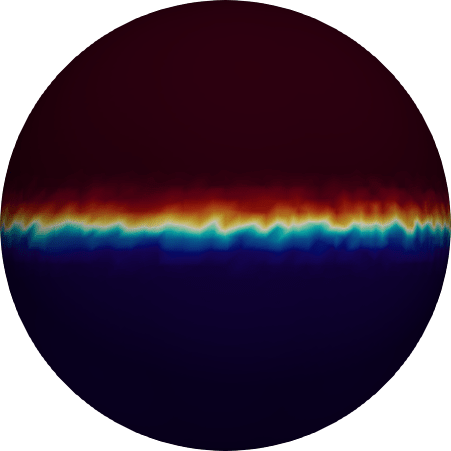}
\put(-85,45){$\sigma_\gamma = 0.025$}
\end{overpic}~
\begin{overpic}[width=0.14\textwidth,grid=false,tics=10]{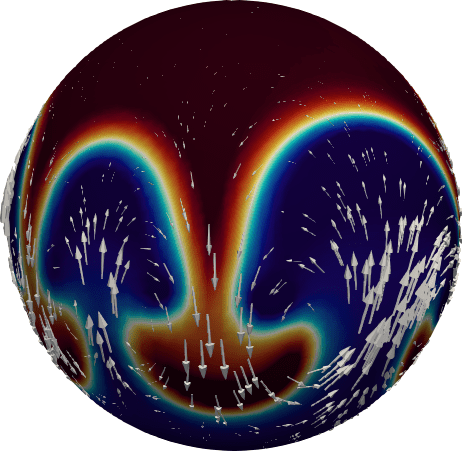}
\end{overpic}~
\begin{overpic}[width=0.14\textwidth,grid=false,tics=10]{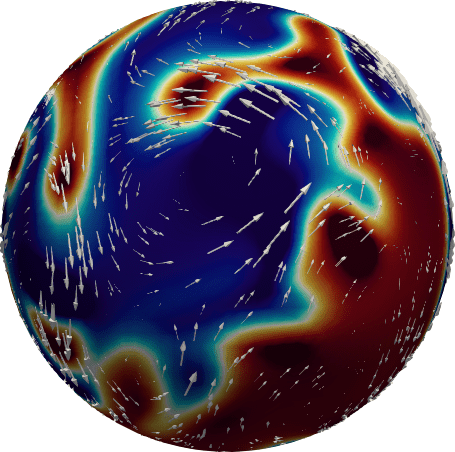}
\end{overpic}~
\begin{overpic}[width=0.14\textwidth,grid=false,tics=10]{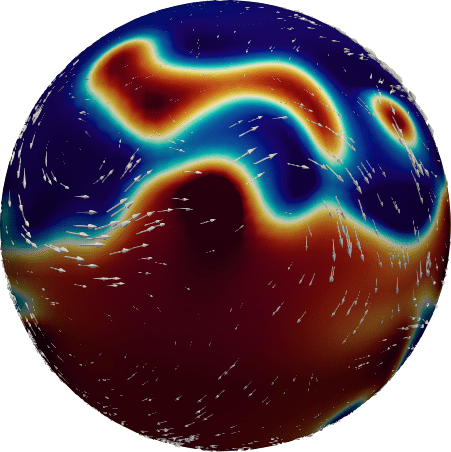}
\end{overpic}~
\begin{overpic}[width=0.14\textwidth,grid=false,tics=10]{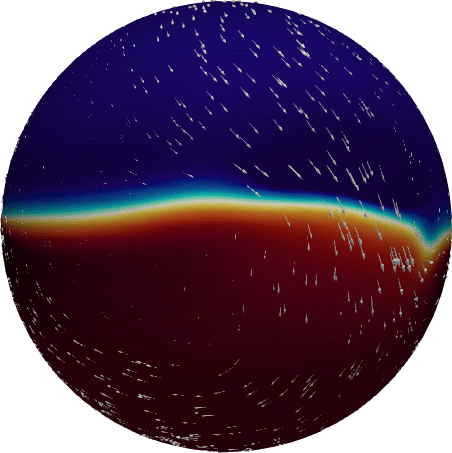}
\end{overpic}~
\begin{overpic}[width=0.14\textwidth,grid=false,tics=10]{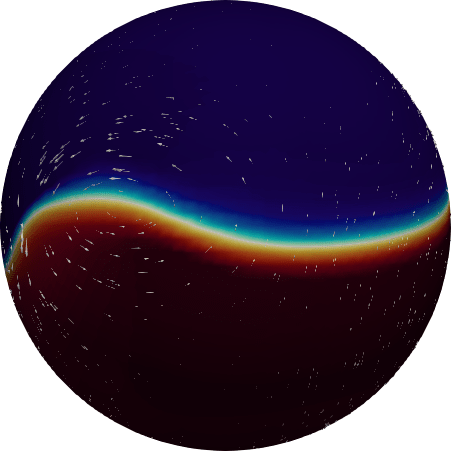}
\end{overpic} \\
\vskip .2cm
\begin{overpic}[width=0.5\textwidth,grid=false,tics=10]{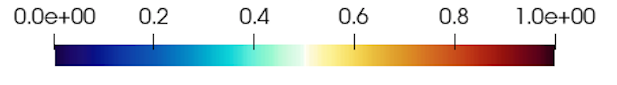}
\end{overpic}
	\caption{RT instability on the sphere: evolution of the order parameter (color) and velocity field (arrows)
	for $\eta = 10^{-2}$ and different values of line tension:
	$\sigma_\gamma=0$ (top) and $\sigma_\gamma = 0.025$ (bottom).
A full animation can be viewed following the link \href{https://youtu.be/_OmR_-qxvAI}{\underline{youtu.be/${}_{-}$OmR${}_{-}$-qxvAI}}
}
	\label{rt_sphere}
\end{figure}

Next, we consider an assymetric torus with constant distant from the center of the tube to the origin
$R=1$ and variable radius of the tube: $r_{min}=0.3\leq r(x,y) \leq r_{max}=0.6$, with
$r(x,y) =r_{min} + 0.5 (r_{max} - r_{min})  (1 - \frac{x}{\sqrt{x^2 + y^2}})$.
We characterize the torus surface as the zero level set of function $\phi = (x^2 + y^2 + z^2 + R^2 - r(x,y)^2)^2 - 4 R^2 (x^2 + y^2)$.
The torus is embedded in an outer domain $\Omega=[-5/3,5/3]^3$, just like the sphere.
We also selected same mesh level ($l=5$) and same time step ($\Delta t =0.1$) as for the sphere.
We set the line tension to $\sigma = 0.025$ and vary the viscosity: $\eta = 10^{-2}, 10^{-1}, 1$.
Fig.~\ref{rt_torus} displays the evolution of the surface fractions for these three values of viscosity.
First, we observe that in all cases the instability develops more slowly on the ``skinny'' side of the torus.
See second column in Fig.~\ref{rt_torus}. The fact that geometry has a considerable effect on the surface
RT instability is also clear when one compares the results on the sphere and the torus for
the same values of $\sigma_\gamma$ and $\eta$, i.e.~the top row in Fig.~\ref{rt_torus}
with the bottom row in Fig.~\ref{rt_sphere}. In particular, notice that while the heavier fluid
reaches the bottom of the sphere around $t = 30$ (Fig.~\ref{rt_sphere}, bottom second-last panel),
the two fluids are still very much mixed on the torus at $t = 160$ (Fig.~\ref{rt_torus}, top left panel).
We need to increase the viscosity value to 1 to be able to see most of the heavier fluid at the bottom
of the torus at $t = 160$ (Fig.~\ref{rt_torus}, bottom left panel), although that is still far from being settled.

\begin{figure}[htb]
\centering
\hskip .7cm
\begin{overpic}[width=0.14\textwidth,grid=false,tics=10]{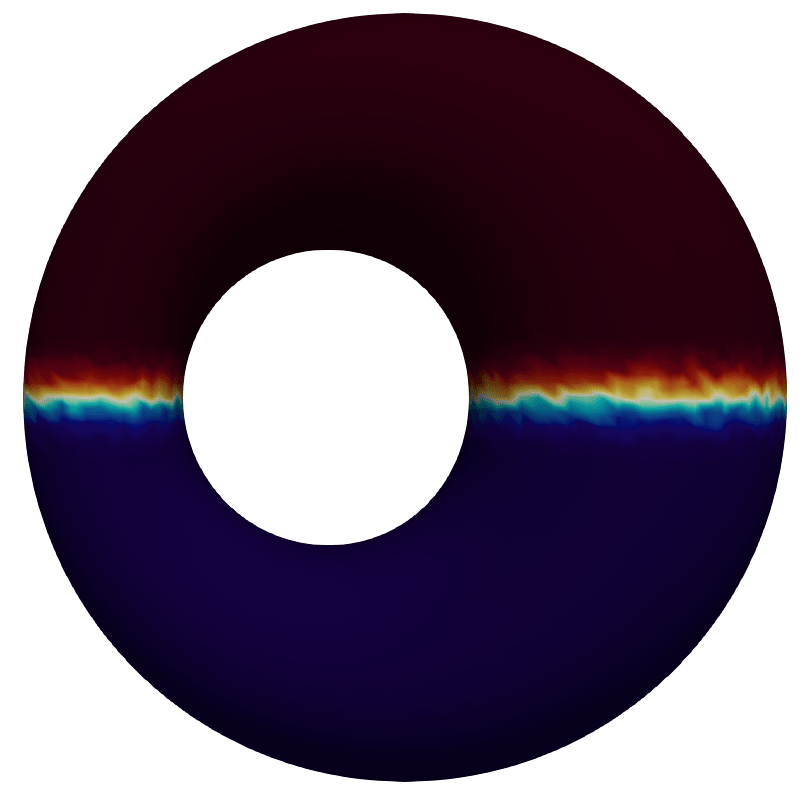}
\put(-70,45){$\eta = 10^{-2}$}
\put(30,100){$t = 0$}
\end{overpic}~
\begin{overpic}[width=0.14\textwidth,grid=false,tics=10]{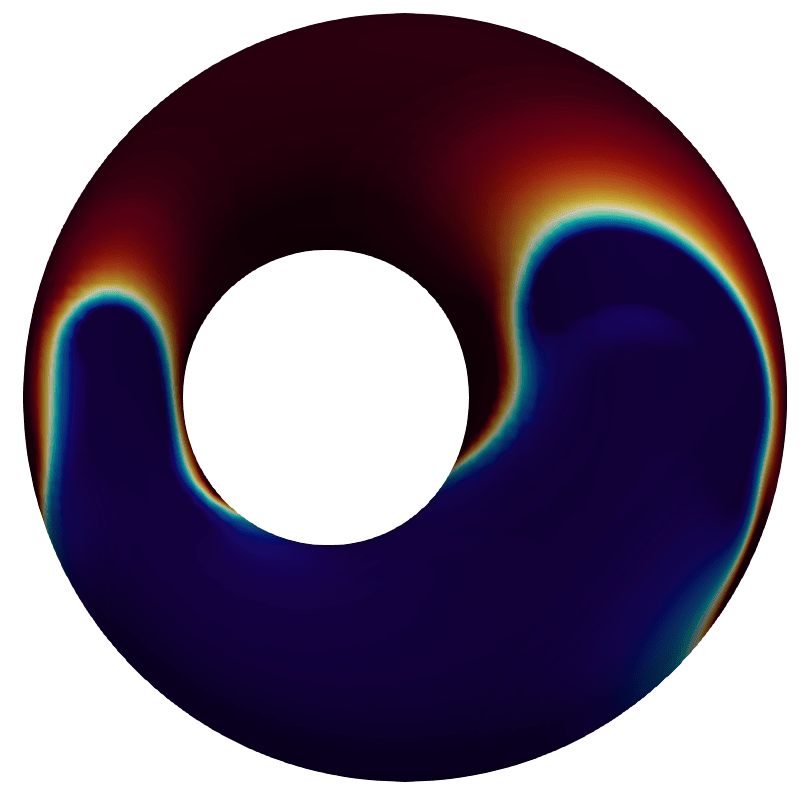}
\put(30,100){$t = 2$}
\end{overpic}~
\begin{overpic}[width=0.14\textwidth,grid=false,tics=10]{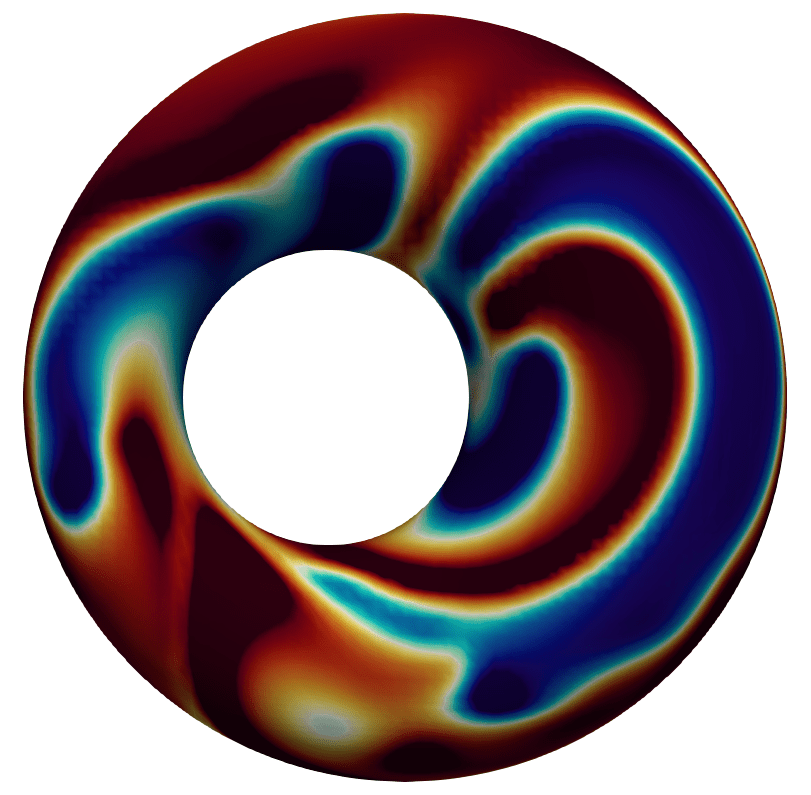}
\put(30,100){$t = 4$}
\end{overpic}~
\begin{overpic}[width=0.14\textwidth,grid=false,tics=10]{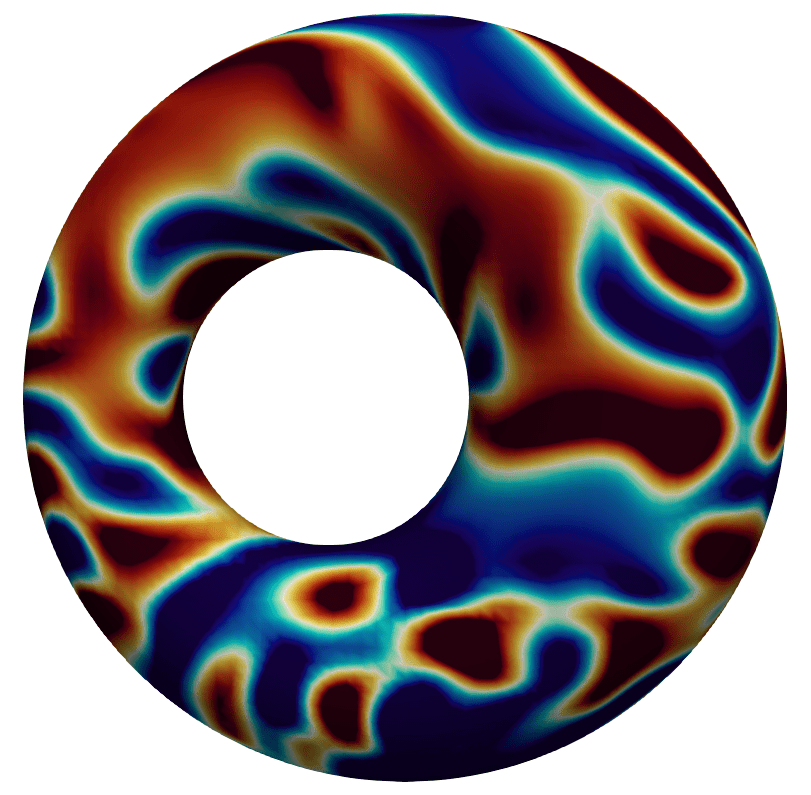}
\put(27,100){$t = 10$}
\end{overpic}~
\begin{overpic}[width=0.14\textwidth,grid=false,tics=10]{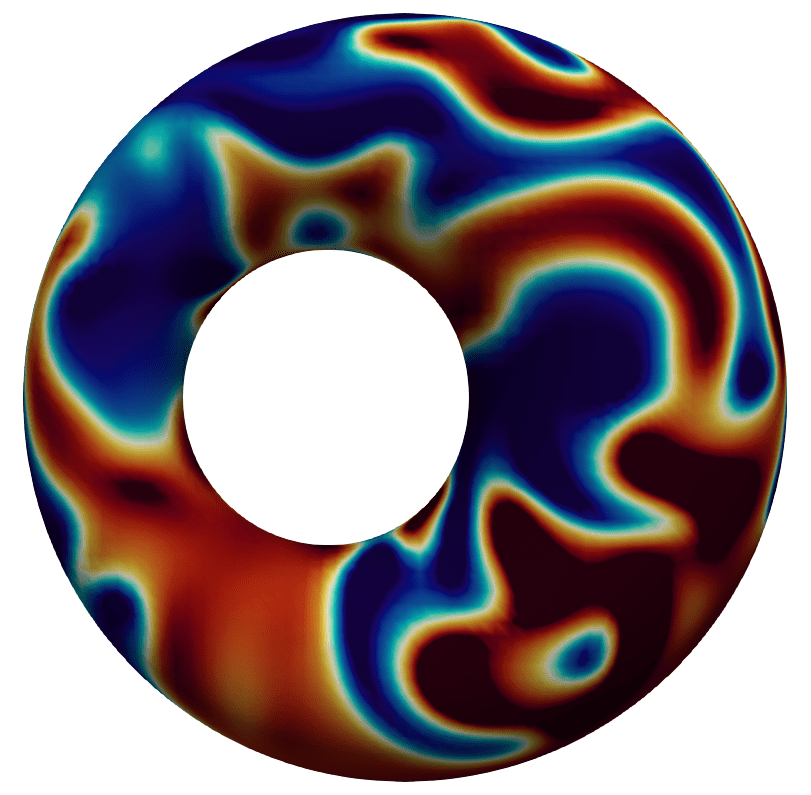}
\put(27,100){$t = 40$}
\end{overpic}~
\begin{overpic}[width=0.14\textwidth,grid=false,tics=10]{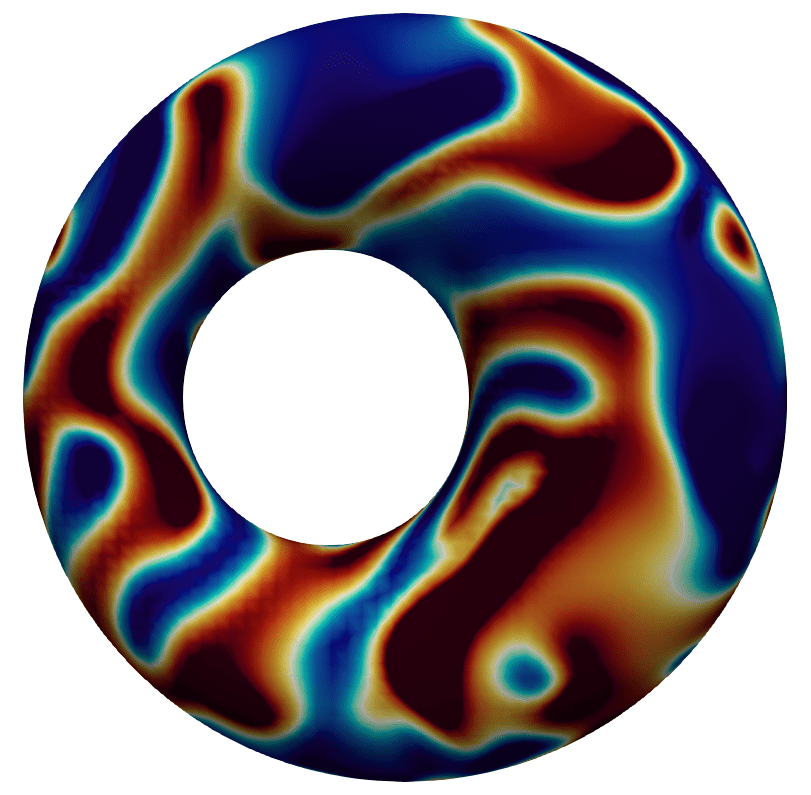}
\put(25,100){$t = 160$}
\end{overpic}\\
\hskip .7cm
\begin{overpic}[width=0.14\textwidth,grid=false,tics=10]{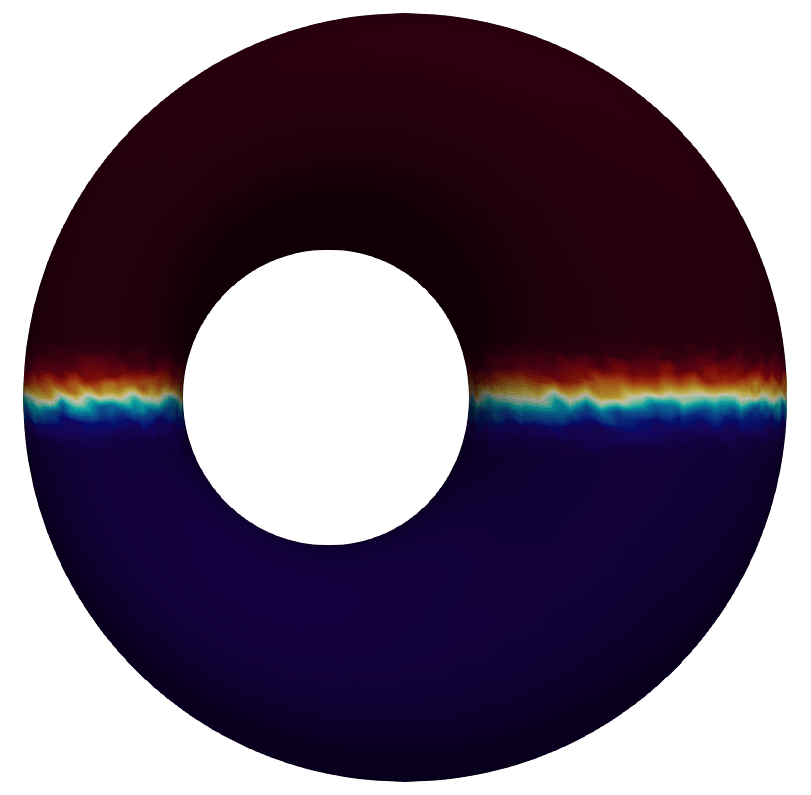}
\put(-70,45){$\eta = 10^{-1}$}
\end{overpic}~
\begin{overpic}[width=0.14\textwidth,grid=false,tics=10]{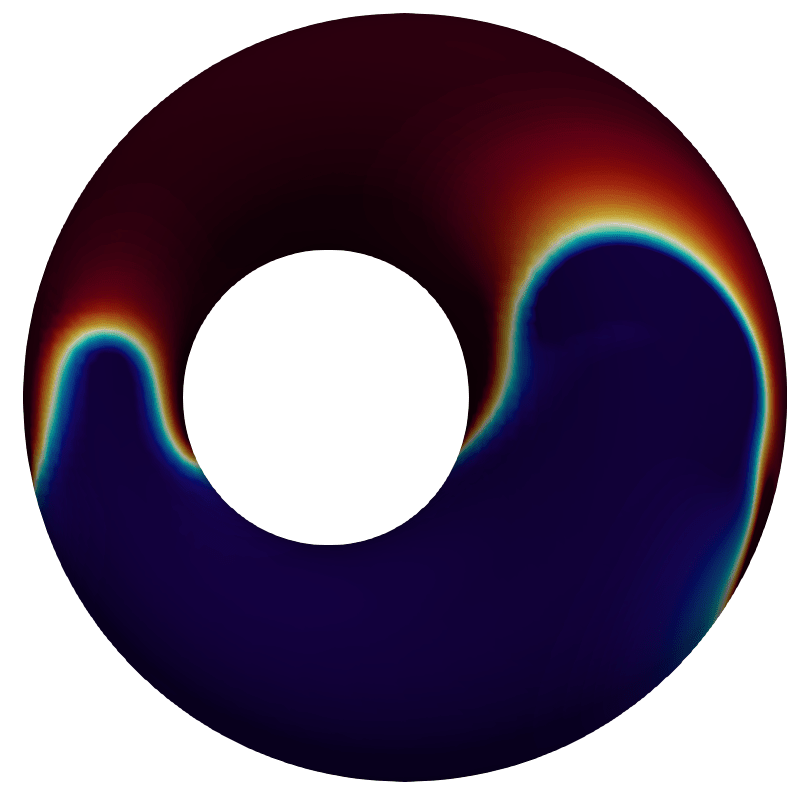}
\end{overpic}~
\begin{overpic}[width=0.14\textwidth,grid=false,tics=10]{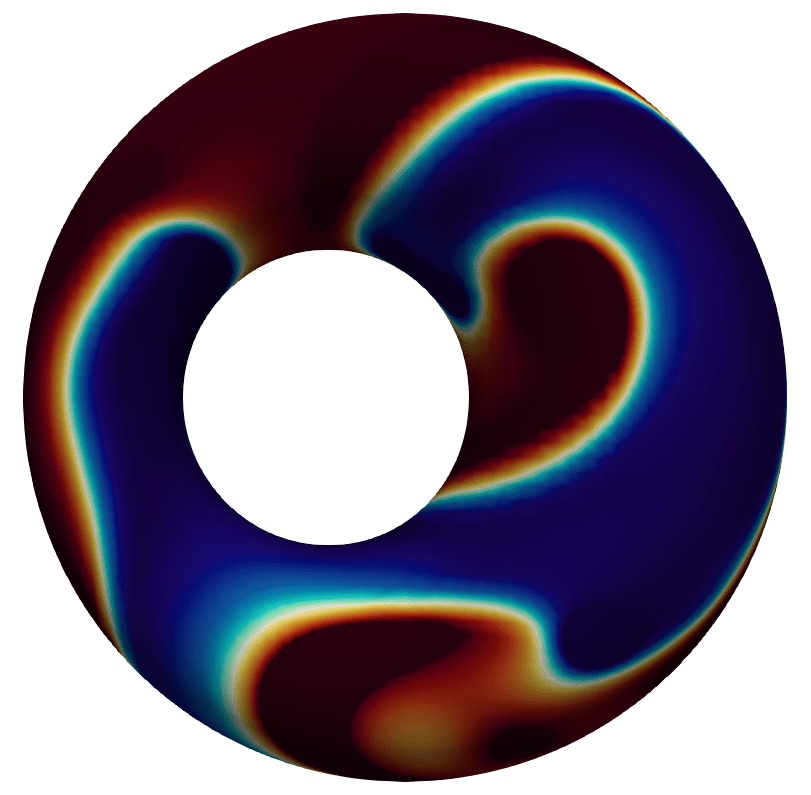}
\end{overpic}~
\begin{overpic}[width=0.14\textwidth,grid=false,tics=10]{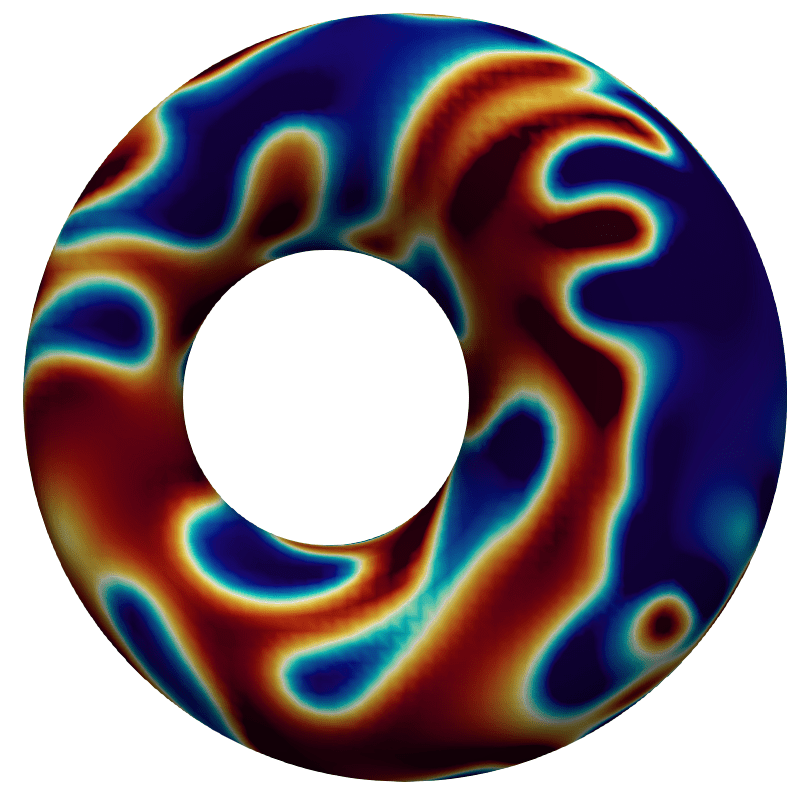}
\end{overpic}~
\begin{overpic}[width=0.14\textwidth,grid=false,tics=10]{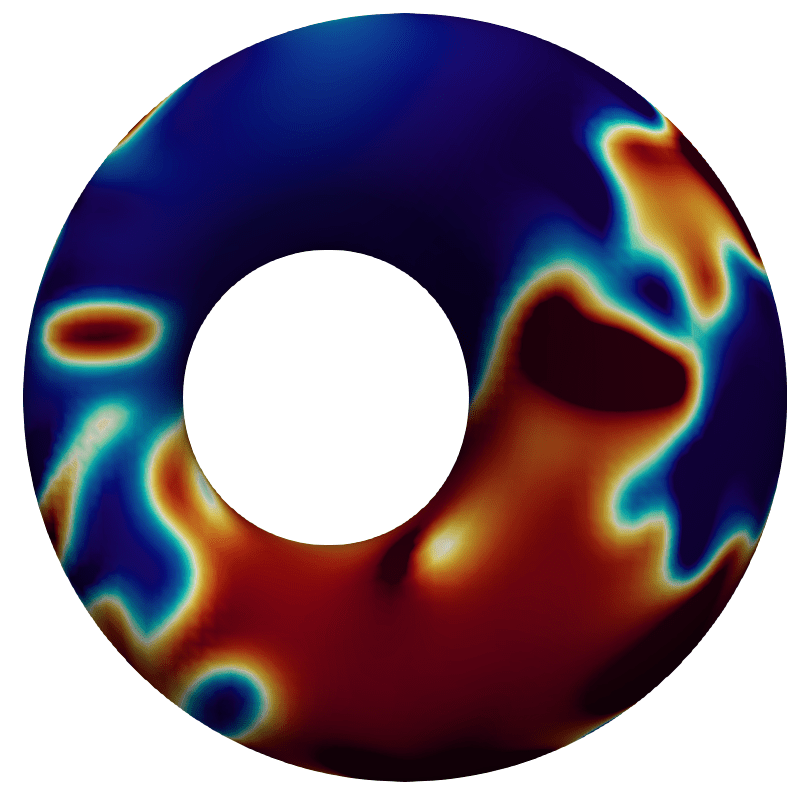}
\end{overpic}~
\begin{overpic}[width=0.14\textwidth,grid=false,tics=10]{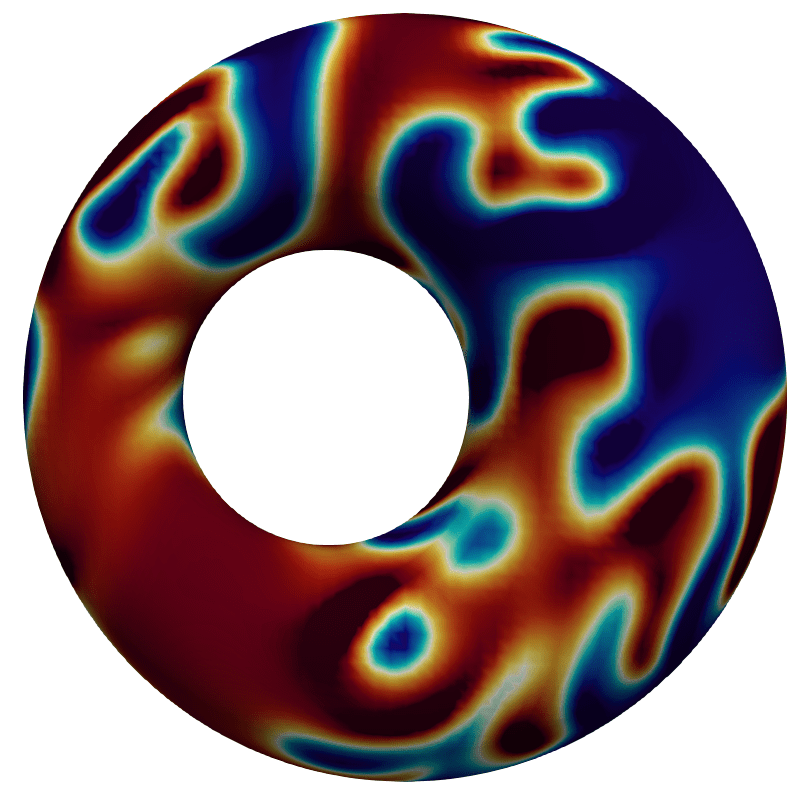}
\end{overpic}\\
\hskip .7cm
\begin{overpic}[width=0.14\textwidth,grid=false,tics=10]{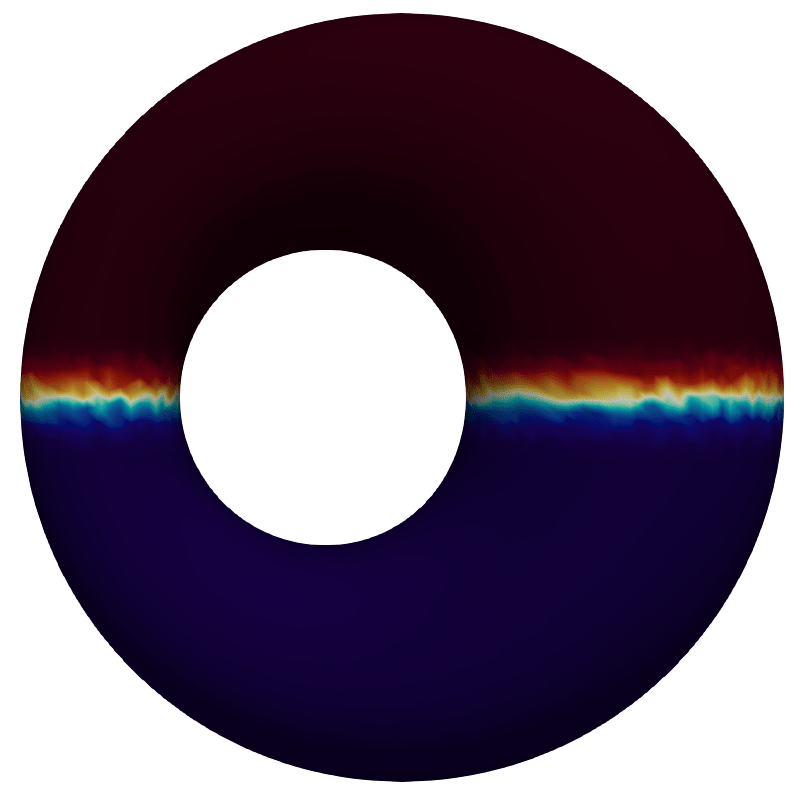}
\put(-70,45){$\eta = 1$}
\end{overpic}~
\begin{overpic}[width=0.14\textwidth,grid=false,tics=10]{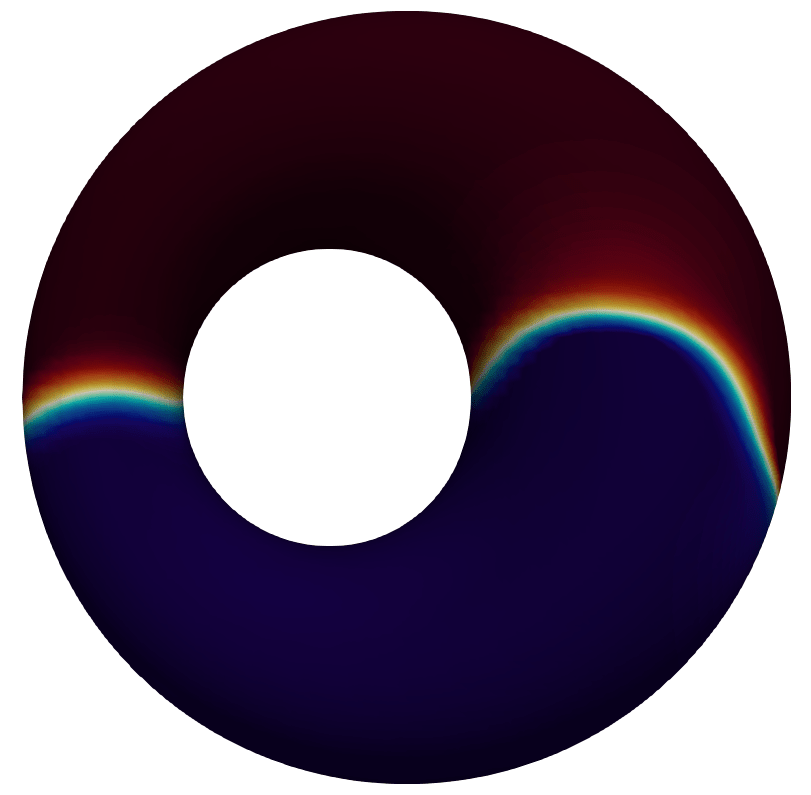}
\end{overpic}~
\begin{overpic}[width=0.14\textwidth,grid=false,tics=10]{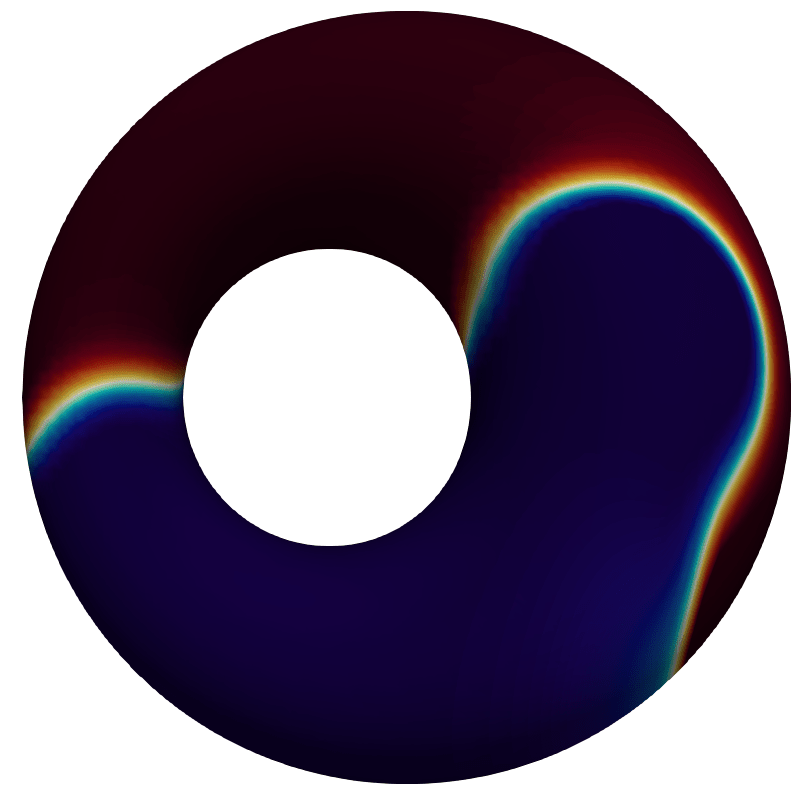}
\end{overpic}~
\begin{overpic}[width=0.14\textwidth,grid=false,tics=10]{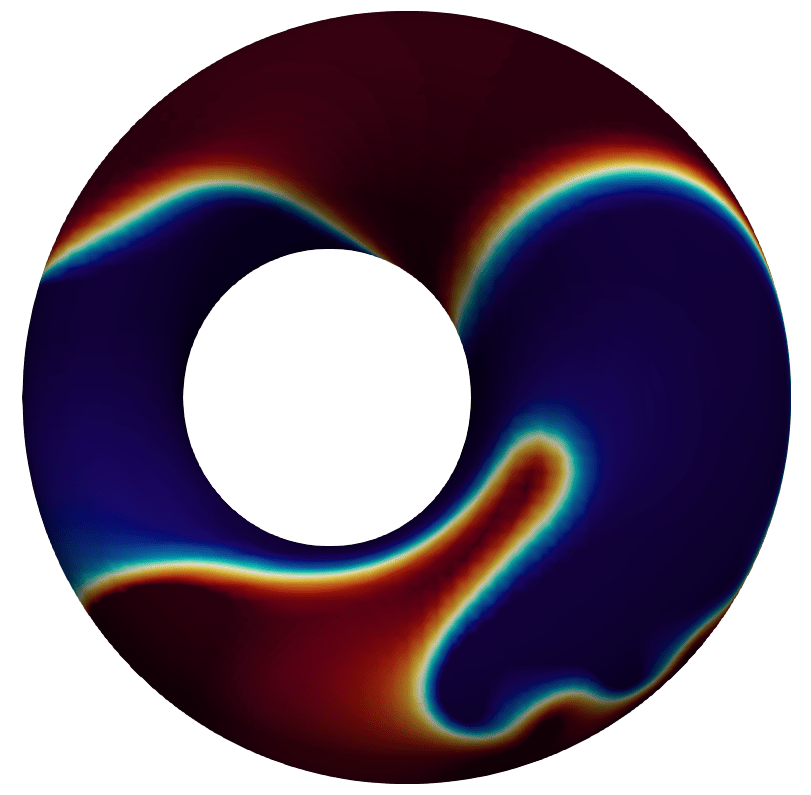}
\end{overpic}~
\begin{overpic}[width=0.14\textwidth,grid=false,tics=10]{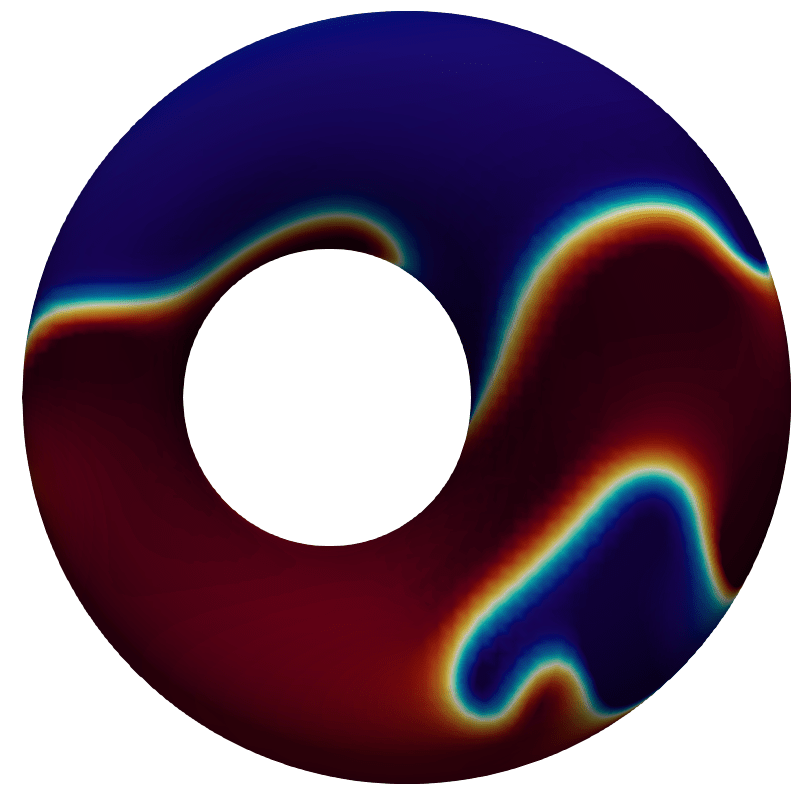}
\end{overpic}~
\begin{overpic}[width=0.14\textwidth,grid=false,tics=10]{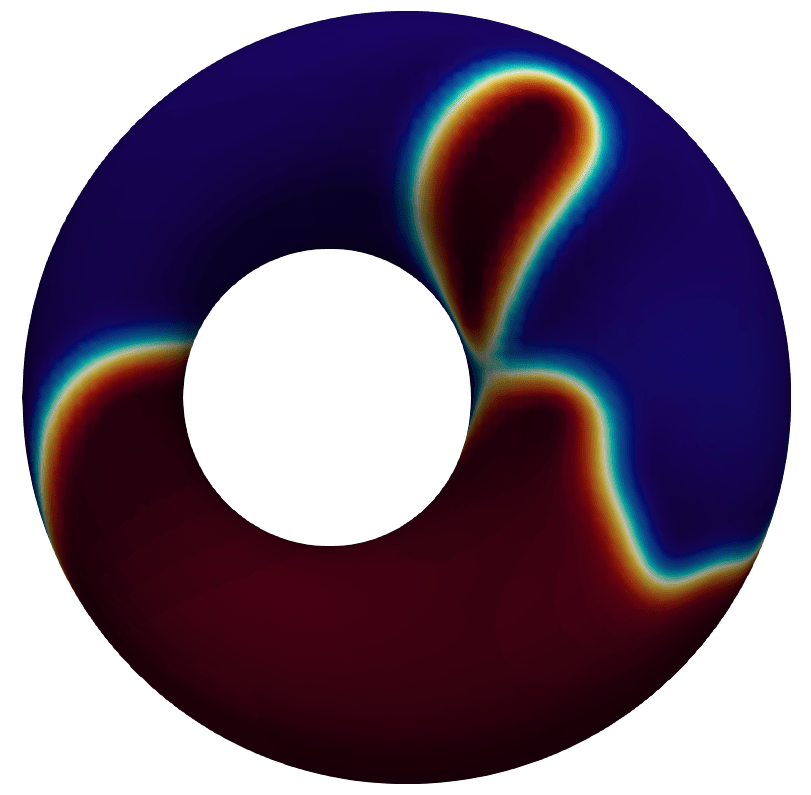}
\end{overpic}\\
\begin{overpic}[width=0.5\textwidth,grid=false,tics=10]{images/lab.png}
\end{overpic}
	\caption{RT instability on the torus: evolution of the order parameter for $\sigma_\gamma = 0.025$ and
	different values of viscosity: $\eta = 10^{-2}$ (top), $\eta = 10^{-1}$ (center), and $\eta = 1$ (bottom).
A full animation can be viewed following the link \href{https://youtu.be/FTqqFjvzEZg}{\underline{youtu.be/FTqqFjvzEZg}}
	}
	\label{rt_torus}
\end{figure}

\section{Conclusions}\label{sec:concl}

In this paper, we presented an extension of a well-known phase field model
for two-phase incompressible flow, and we applied and analyzed an unfitted finite element method
for its numerical approximation.
The advantage of our model is that it is thermodynamically consistent for a general
monotone relation of density and phase-field variable. Because of our interest in biomembranes,
this Navier--Stokes--Cahn--Hilliard type system is posed on an arbitrary-shaped closed smooth surface.
Although in this paper we assumed the surfaces to be rigid, our long term goal is to
study two-phase incompressible flow on evolving shapes. In fact, biological membranes exhibit
shape changes that need to be accounted for in a realistic model. This need dictated our choice
for the numerical approach, which is a versatile geometrically unfitted finite element method
called TraceFEM.

In order to reduce the computational cost, the discrete scheme we proposed decouples the fluid problem
(a linearized Navier--Stokes type problem) from the phase-field problem (a Cahn--Hilliard type problem with
constant mobility) at each time step. An attractive feature of our scheme is that
the numerical solution satisfies the same stability bound as the solution of the original system
under some restrictions on the discretization parameters.

We validated our implementation of the proposed numerical scheme with a benchmark problem
and applied it to simulate well-known two-phase fluid flows: the Kelvin--Helmholtz and Rayleigh--Taylor instabilities.
We investigated the effect of line tension on such instabilities.
For the Rayleigh--Taylor instability, we also assessed the effect of viscosity
and surface shape, which plays an important role in the evolution of the instability.

\section*{Acknowledgments}
This work was partially supported by US National Science Foundation (NSF) through grant DMS-1953535.
A.Z. and M.O. ~also acknowledge the support from NSF through DMS-2011444.
A.Q.~also acknowledges the support from NSF through DMS-1620384.

\bibliographystyle{abbrv}
\bibliography{main}{}

\end{document}